\documentclass[12pt,a4paper]{amsart}
\usepackage{amsmath,amsthm,amssymb}
\usepackage{color}
\usepackage[top=30truemm,bottom=30truemm,left=25truemm,right=25truemm]{geometry}
\usepackage{tikz}
\usepackage{hyperref}

\newtheorem{thm}{Theorem}[subsection]
\newtheorem{lem}[thm]{Lemma}
\newtheorem{prop}[thm]{Proposition}
\newtheorem{cor}[thm]{Corollary}

\theoremstyle{definition}
\newtheorem{defi}[thm]{Definition}
\newtheorem{rem}[thm]{Remark}

\newtheorem{ex}[thm]{Example}

\numberwithin{equation}{subsection}

\author{Hideya Watanabe}
\address{(H. Watanabe) Department of Mathematics, Colledge of Science, Rikkyo University, 3-34-1, Nishi-Ikebukuro, Toshima-ku, Tokyo, 171-8501, Japan}
\email{watanabehideya@gmail.com}
\date{\today}

\title[Finite-dimensional irreducible representations of TLA of the 2nd kind]{Finite-dimensional irreducible representations of twisted loop algebras of the second kind}
\subjclass[2020]{Primary~17B10; Secondary~17B65}
\keywords{twisted loop algebra of the second kind, representation theory, classification}

\begin{document}
\maketitle

\begin{abstract}
  Twisted loop algebras of the second kind are infinite-dimensional Lie algebras that are constructed from a semisimple Lie algebra and an automorphism on it of order at most $2$.
  They are examples of equivariant map algebras.
  The finite-dimensional irreducible representations of an arbitrary equivariant map algebra have been classified by Neher--Savage--Senesi.
  In this paper, we classify the finite-dimensional irreducible representations of twisted loop algebras of the second kind in a more elementary way.
\end{abstract}

% \tableofcontents

% \newpage
\section{Introduction}
\subsection{Twisted loop algebras}
To a simple Lie algebra $\mathfrak{g}$ over $\mathbb{C}$ and a Lie algebra automorphism $\theta$ on $\mathfrak{g}$ of finite order, say $m$, one can associate an infinite-dimensional Lie algebra $\mathcal{L}_1(\mathfrak{g},\theta)$, called the \emph{twisted loop algebra} (of the first kind) (\cite[Chapter 8]{Kac83}).
It is the fixed point subalgebra of the loop algebra $\mathcal{L}(\mathfrak{g}) := \mathbb{C}[t,t^{-1}] \otimes \mathfrak{g}$ with respect to the automorphism $\theta_1$ defined by
\[
  \theta_1(t^n \otimes x) := (\zeta t)^n \otimes \theta(x) \quad \text{ for each } n \in \mathbb{Z},\ x \in \mathfrak{g},
\]
where $\zeta \in \mathbb{C}$ denotes a primitive $m$-th root of $1$.

The construction above can be applied to the current algebra $\mathcal{C}(\mathfrak{g}) := \mathbb{C}[t] \otimes \mathfrak{g}$ instead of the loop algebra.
Let $\mathcal{C}(\mathfrak{g},\theta)$ denote the resulting Lie algebra.

The twisted loop algebra $\mathcal{L}_1(\mathfrak{g},\theta)$ admits a quantum deformation $U_q(\mathcal{L}_1(\mathfrak{g},\theta))$ (\cite[Section 12.2]{ChPr94} for untwisted types, \cite{ChPr98} for twisted types).
Similarly, $\mathcal{C}(\mathfrak{g},\theta)$ has a quantum analogue $Y(\mathfrak{g},\theta)$, called the Yangian (\cite{Dri85} for $\theta = \mathrm{id}$, \cite{GuMa12} for $\theta$ arising from a Dynkin diagram automorphism).
The quantum loop algebra $U_q(\mathcal{L}_1(\mathfrak{g},\theta))$ degenerates to the Yangian $Y(\mathfrak{g,\theta})$ (\cite{Dri87}, \cite{GaTL13}, \cite{GuMa12}).
This explains the similarity of representation theories of these two algebras.

When $m \leq 2$, or equivalently $\theta^2 = \mathrm{id}$, the automorphism $\theta$ can also be extended to $\mathcal{L}(\mathfrak{g})$ by
\[
  \theta_2(t^n \otimes x) := t^{-n} \otimes \theta(x) \quad \text{ for each } n \in \mathbb{Z},\ x \in \mathfrak{g}.
\]
The fixed point subalgebra of $\mathcal{L}(\mathfrak{g})$ with respect to this automorphism is denoted by $\mathcal{L}_2(\mathfrak{g},\theta)$, and called the \emph{twisted loop algebra of the second kind}.

In general, an automorphism $\theta$ on a symmetrizable Kac-Moody Lie algebra $\mathfrak{g}$ is said to be of the first kind if $\theta(\mathfrak{b}^+) \cap \mathfrak{b}^-$ is finite-dimensional, where $\mathfrak{b}^+$ and $\mathfrak{b}^-$ are the Borel subalgebra and the opposite Borel subalgebra of $\mathfrak{g}$, respectively, with respect to a fixed simple system for the root system of $\mathfrak{g}$.
Similarly, $\theta$ is said to be of the second kind if $\theta(\mathfrak{b}^+) \cap \mathfrak{b}^+$ is finite-dimensional.
Clearly, when $\mathfrak{g}$ is finite-dimensional, each automorphism is of the first and the second kind at the same time.
When $\theta$ arises from a Dynkin diagram automorphism, the automorphism $\theta_1$ above is the restriction of an automorphism of the first kind on the affine Lie algebra associated with $(\mathfrak{g},\theta)$, while $\theta_2$ is the restriction of an automorphism of the second kind.

The automorphisms of the second kind of order at most $2$ are main ingredients in Kolb's construction of quantum symmetric pairs in \cite{Kol14}.
In particular, the twisted loop algebra $\mathcal{L}_2(\mathfrak{g},\theta)$ of the second kind can be quantized to a coideal subalgebra $\mathbf{U}^\imath(\mathcal{L}_2(\mathfrak{g},\theta))$ of the quantum loop algebra $U_q(\mathcal{L}_1(\mathfrak{g},\mathrm{id}))$.

Although the construction above cannot be applied to the current algebra, in some cases there are related quantum algebras, called \emph{twisted Yangians} (\cite{Ols92}, \cite{GuRe16}).
Twisted Yangians are quantum deformations of $\mathcal{C}(\mathfrak{g},\theta)$, and coideal subalgebras of Yangians.
It has been proved in \cite{CoGu15} and \cite{LWZ24} that in some cases, $\mathbf{U}^\imath(\mathcal{L}_2(\mathfrak{g},\theta))$ degenerates to a twisted Yangian.

Compared to the quantum loop algebras, less is known about finite-dimensional representation theory of $\mathbf{U}^\imath(\mathcal{L}_2(\mathfrak{g},\theta))$.
There are some type-dependent classification results; \cite{ItTe10} for the $q$-Onsager algebra (type $A\mathrm{I}_1$) and \cite{GoMo10} for type $A\mathrm{II}$.
Finite-dimensional representations have been studied from a viewpoint of integrable systems in \cite{KOY19}, \cite{KuOk22}, \cite{ApVl22b}, \cite{KOW22}, to name a few.
Recently, there is a progress on character theory for type $A\mathrm{I}$ in \cite{LiPr25}.

\subsection{Equivariant map algebras}
At the classical level ($q = 1$), the finite-dimensional irreducible representations of a twisted loop algebra of the second kind has been classified in the theory of \emph{equivariant map algebras}, which was developed in \cite{NSS12} (see also a survey paper \cite{NeSa13}).

The equivariant map algebra associated with a Lie algebra $\mathfrak{g}$, a scheme $X$, and a finite group $\Gamma$ acting on both $\mathfrak{g}$ and $X$, is the Lie algebra of equivariant regular maps $X \to \mathfrak{g}$ of schemes with respect to the $\Gamma$-actions.
The current algebras, loop algebras, and twisted loop algebras of the first and second kinds are all equivariant map algebras.
Although the equivariant map algebras form a huge class of Lie algebras, their finite-dimensional irreducible representations are classified in a unified way.

\subsection{Classification of finite-dimensional irreducible representations}
The aim of this paper is to classify the finite-dimensional irreducible representations of each twisted loop algebra of the second kind in an elementary way.
As mentioned above, the classification problem has been settled in the theory of equivariant map algebras.
However, taking quantum analogues into account, a proof of the classification theorem should be as elementary as possible.
Also, from the viewpoint of semisimple Lie algebras and loop algebras, it is more natural to classify the finite-dimensional irreducible representations in terms of highest weights.

The classification of finite-dimensional irreducible representations of a semisimple Lie algebra is reduced to that of the minimal semisimple Lie algebra; $\mathfrak{sl}_2$.
Let $e,f,h \in \mathfrak{sl}_2$ denote the Chevalley basis.
Roughly speaking, the classification for $\mathfrak{sl}_2$ is achieved by calculating the products $e f^r$ for all $r \in \mathbb{Z}_{\geq 0}$.

The situation is very similar for the loop algebras.
In this case, the smallest loop algebra is $\mathcal{L}(\mathfrak{sl}_2)$, and we need to compute $e_1^r f_0^r$, where $x_a := t^a \otimes x$ for each $a \in \mathbb{Z}$ and $x \in \mathfrak{sl}_2$.

The strategy in this paper is an analogue of the above.
However, unlike semisimple Lie algebras and loop algebras, there are four minimal twisted loop algebras of the second kind.
They are the loop algebra of $\mathfrak{sl}_2$, the current algebra of $\mathfrak{sl}_2$, the Onsager algebra (\cite{Ons44}, see also \cite{Roa91} for relations with $\mathcal{L}(\mathfrak{sl}_2)$), and the generalized Onsager algebra of $\mathfrak{sl}_3$ (\cite{UgIv96}, see also \cite{Sto20} for general types).
As a result, we need to compute something like $e_1^r f_0^r$ for these algebras one by one.
Theorems \ref{thm: classification DeltaA1}, \ref{thm: classification A1}, \ref{thm: classification AI1}, and \ref{thm: classification AI2} classify the finite-dimensional irreducible representations of these four algebras in terms of highest weights.
These results lead us to the classification theorem for a general twisted loop algebra of the second kind (Theorem \ref{thm: classification general}).

\subsection{Organization}
This paper is organized as follows.
In Section \ref{sect: tla2}, we introduce the twisted loop algebras of the second kind, and explain how to describe them in terms of Dynkin diagrams.
In Section \ref{sect: ema}, we review the theory of equivariant map algebras and then apply it to the twisted loop algebras of the second kind.
After fixing notation and proving some preliminary results in Section \ref{sect: prelim}, we classify the finite-dimensional irreducible representations of each twisted loop algebra of the second kind in terms of highest weights in Section \ref{sect: classification}.

\subsection*{Acknowledgments}
The author thanks Hironori Oya for explaining him about the representation theory of quantum affine $\mathfrak{sl}_2$.
He is also grateful to Tomasz Prze\'{z}dziecki for discussion on the representation theory of affine quantum symmetric pairs.
This work was supported by JSPS KAKENHI Grant Number JP24K16903.

% \newpage
\section{Twisted loop algebras of the second kind}\label{sect: tla2}
In this section, we introduce the twisted loop algebras of the second kind.
Then, we explain that each twisted loop algebra of the second kind can be described in terms of Dynkin diagrams.
This enables us to formulate the notion of highest weight modules for the twisted loop algebras of the second kind.

For the terminology regarding semisimple and affine Lie algebras, we refer \cite{Kac83}.

\subsection{Basic definitions and properties}
For each Lie algebra $\mathfrak{a}$, let $\operatorname{Aut}(\mathfrak{a})$ denote the group of automorphisms on $\mathfrak{a}$.

Let $\mathfrak{g}$ be a semisimple Lie algebra over $\mathbb{C}$ and $\theta \in \operatorname{Aut}(\mathfrak{g})$ such that $\theta^2 = \mathrm{id}$.
Set
\[
  \mathfrak{k} := \{ x \in \mathfrak{g} \mid \theta(x) = x \}, \quad \mathfrak{p} := \{ x \in \mathfrak{g} \mid \theta(x) = -x \}.
\]
Then, we have a $\mathbb{Z}/2\mathbb{Z}$-gradation
\begin{align}\label{eq: Z2_grad_g}
  \mathfrak{g} = \mathfrak{k} \oplus \mathfrak{p}.
\end{align}
Namely, we have $[\mathfrak{k}, \mathfrak{k}], [\mathfrak{p},\mathfrak{p}] \subseteq \mathfrak{k}$ and $[\mathfrak{k},\mathfrak{p}] \subseteq \mathfrak{p}$.

Let $L := \mathbb{C}[t,t^{-1}]$ denote the $\mathbb{C}$-algebra of Laurent polynomials in one variable $t$, and $\mathcal{L} := L \otimes_\mathbb{C} \mathfrak{g}$ the loop algebra associated with $\mathfrak{g}$.
Namely, $\mathcal{L}$ is the Lie algebra over $\mathbb{C}$ with Lie bracket given by
\begin{align*}\label{eq: Lie bracket for loop alg}
  [P \otimes x, Q \otimes y] := PQ \otimes [x,y] \ \text{ for each } P,Q \in L,\ x,y \in \mathfrak{g}.
\end{align*}
The automorphism $\theta$ on $\mathfrak{g}$ can be extended to $\mathcal{L}$ by
\[
  \theta(t^n \otimes x) := t^{-n} \otimes \theta(x) \ \text{ for each } n \in \mathbb{Z},\ x \in \mathfrak{g}.
\]

\begin{defi}
  The \emph{twisted loop algebra of the second kind} associated with $(\mathfrak{g}, \theta)$ is the fixed point subalgebra $\mathcal{L}^{\mathrm{tw}}(\mathfrak{g},\theta)$ of $\mathcal{L}$ with respect to $\theta$:
  \[
    \mathcal{L}^{\mathrm{tw}}(\mathfrak{g},\theta) := \{ \mathfrak{x} \in \mathcal{L} \mid \theta(\mathfrak{x}) = \mathfrak{x} \}.
  \]
\end{defi}

% \begin{rem}
%   The twisted loop algebra (of the first kind) associated with $(\mathfrak{g},\theta)$ is the fixed point subalgebra of $\mathcal{L}$ with respect to the automorphism given by
%   \[
%     (t^n \otimes x) \mapsto (-t)^n \otimes \theta(x).
%   \]
% \end{rem}

The Lie algebra $\mathcal{L}^\mathrm{tw} := \mathcal{L}^{\mathrm{tw}}(\mathfrak{g},\theta)$ has a natural $\mathbb{Z}/2\mathbb{Z}$-gradation.
To describe it, set
\begin{align}\label{eq: Lpm}
  L_\pm := \{ P(t) \in L \mid P(t^{-1}) = \pm P(t) \}.
\end{align}

\begin{prop}
  We have the following $\mathbb{Z}/2\mathbb{Z}$-gradation of $\mathcal{L}^\mathrm{tw}$$:$
  \[
    \mathcal{L}^\mathrm{tw} = (L_+ \otimes \mathfrak{k}) \oplus (L_- \otimes \mathfrak{p}).
  \]
\end{prop}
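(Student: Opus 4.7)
The plan is to exploit the fact that both $\mathfrak{g}$ and $L$ carry natural $\mathbb{Z}/2\mathbb{Z}$-gradations that are compatible with $\theta$, and then identify fixed points summand by summand.

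First I would write $\mathcal{L} = L \otimes \mathfrak{g} = (L \otimes \mathfrak{k}) \oplus (L \otimes \mathfrak{p})$ using \eqref{eq: Z2_grad_g}. Next I would introduce the $\mathbb{C}$-linear involution $\sigma \colon L \to L$ defined by $\sigma(P)(t) := P(t^{-1})$. For a Laurent polynomial $P(t) = \sum_{n \in \mathbb{Z}} a_n t^n$ we have $\sigma(P)(t) = \sum_n a_{-n} t^n$, so $\sigma$ is an involution with $(+1)$-eigenspace $L_+$ and $(-1)$-eigenspace $L_-$ by the definition \eqref{eq: Lpm}; in particular $L = L_+ \oplus L_-$.

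Then, since $\theta$ acts as $+1$ on $\mathfrak{k}$ and $-1$ on $\mathfrak{p}$, the defining formula $\theta(t^n \otimes x) = t^{-n} \otimes \theta(x)$ shows that on $L \otimes \mathfrak{k}$ the map $\theta$ coincides with $\sigma \otimes \mathrm{id}_{\mathfrak{k}}$, while on $L \otimes \mathfrak{p}$ it coincides with $-\sigma \otimes \mathrm{id}_{\mathfrak{p}} = \sigma \otimes (-\mathrm{id}_{\mathfrak{p}})$. In particular, $\theta$ preserves each summand $L \otimes \mathfrak{k}$ and $L \otimes \mathfrak{p}$, so the fixed point set decomposes as
\[
  \mathcal{L}^\mathrm{tw} = (L \otimes \mathfrak{k})^\theta \oplus (L \otimes \mathfrak{p})^\theta.
\]
Using the eigenspace decomposition $L = L_+ \oplus L_-$ together with the triviality of $\mathfrak{k}, \mathfrak{p}$ as $\theta$-modules, I would then read off $(L \otimes \mathfrak{k})^\theta = L_+ \otimes \mathfrak{k}$ and $(L \otimes \mathfrak{p})^\theta = L_- \otimes \mathfrak{p}$, which yields the claim.

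There is no real obstacle: the only point that requires a line of care is the identification $(L \otimes V)^{\sigma \otimes \epsilon} = L_\epsilon \otimes V$ for $\epsilon = \pm 1$, which follows from the fact that tensoring with a fixed finite-dimensional vector space commutes with taking eigenspaces of a linear operator on $L$. The asserted decomposition being a $\mathbb{Z}/2\mathbb{Z}$-gradation is automatic from $L_+ L_+, L_- L_- \subseteq L_+$ and $L_+ L_- \subseteq L_-$ together with the bracket relations $[\mathfrak{k},\mathfrak{k}], [\mathfrak{p},\mathfrak{p}] \subseteq \mathfrak{k}$ and $[\mathfrak{k},\mathfrak{p}] \subseteq \mathfrak{p}$, which I would verify in a single short check at the end.
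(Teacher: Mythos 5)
Your proposal is correct and is essentially the paper's own argument, just written out in more detail: the paper also decomposes $\mathcal{L} = (L \otimes \mathfrak{k}) \oplus (L \otimes \mathfrak{p})$ via \eqref{eq: Z2_grad_g} and observes that $\theta$ preserves this decomposition, from which the identification of fixed points on each summand follows exactly as you describe.
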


\begin{proof}
  By the decomposition \eqref{eq: Z2_grad_g}, we have
  \[
    \mathcal{L} = (L \otimes \mathfrak{k}) \oplus (L \otimes \mathfrak{p})
  \]
  as linear spaces.
  Since $\theta$ preserves this decomposition, the assertion follows.
\end{proof}

Let us see the effect of conjugating $\theta$ by a Lie algebra automorphism $\phi \in \operatorname{Aut}(\mathfrak{g})$.
The conjugate ${}^\phi \theta := \phi \circ \theta \circ \phi^{-1}$ satisfies $({}^\phi \theta)^2 = \mathrm{id}$.
Hence, we can consider the corresponding twisted loop algebra $\mathcal{L}^{\mathrm{tw}}(\mathfrak{g},{}^\phi \theta)$ of the second kind.

On the other hand, $\phi$ can be extended to $\mathcal{L}$ by
\[
  \phi(t^n \otimes x) := t^n \otimes \phi(x) \ \text{ for each } n \in \mathbb{Z},\ x \in \mathfrak{g}.
\]

\begin{prop}\label{prop: conjugation}
  The automorphism $\phi$ on $\mathcal{L}$ gives rise to a Lie algebra isomorphism
  \[
    \mathcal{L}^{\mathrm{tw}}(\mathfrak{g},\theta) \rightarrow \mathcal{L}^{\mathrm{tw}}(\mathfrak{g},{}^\phi \theta).
  \]
\end{prop}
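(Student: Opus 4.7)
The plan is to verify the isomorphism in three short steps: confirm that $\phi$ extended to $\mathcal{L}$ is a Lie algebra automorphism of $\mathcal{L}$, show that this extension intertwines $\theta$ and ${}^\phi\theta$ as automorphisms of $\mathcal{L}$, and then read off the desired isomorphism by restricting to fixed points.

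First I would observe that since the extension is defined by $\phi(t^n \otimes x) = t^n \otimes \phi(x)$ and acts as the identity on the $L$-factor, it is an $L$-linear map; together with the fact that $\phi$ is an algebra automorphism of $\mathfrak{g}$ and that the Lie bracket on $\mathcal{L}$ is the $L$-linear extension of the bracket on $\mathfrak{g}$, this immediately gives that the extension is a Lie algebra automorphism of $\mathcal{L}$, with inverse the extension of $\phi^{-1}$.

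Next I would check the intertwining relation ${}^\phi\theta \circ \phi = \phi \circ \theta$ on $\mathcal{L}$. Both sides send $t^n \otimes x$ to $t^{-n} \otimes \phi(\theta(x))$, using only the definitions of $\theta$ and ${}^\phi\theta$ on $\mathcal{L}$ and of the extension of $\phi$. This is a direct substitution; ${}^\phi\theta := \phi \circ \theta \circ \phi^{-1}$ is exactly what makes the two compositions cancel.

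Finally, from the intertwining, for any $\mathfrak{x} \in \mathcal{L}$ one has ${}^\phi\theta(\phi(\mathfrak{x})) = \phi(\theta(\mathfrak{x}))$, so $\phi(\mathfrak{x})$ is fixed by ${}^\phi\theta$ if and only if $\mathfrak{x}$ is fixed by $\theta$. Thus $\phi$ restricts to a bijection $\mathcal{L}^{\mathrm{tw}}(\mathfrak{g},\theta) \to \mathcal{L}^{\mathrm{tw}}(\mathfrak{g},{}^\phi\theta)$, and since it is already a Lie algebra homomorphism on all of $\mathcal{L}$, the restriction is a Lie algebra isomorphism. There is no real obstacle here; the only point to be careful about is distinguishing the symbol $\phi$ as an automorphism of $\mathfrak{g}$ from its extension to $\mathcal{L}$, but once the extension is shown to be a Lie algebra homomorphism the rest is formal.
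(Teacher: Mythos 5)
Your proof is correct and is exactly the argument the paper has in mind: the paper's proof simply states that the assertion follows immediately from the definitions, and your three steps (the extension of $\phi$ is an $L$-linear Lie algebra automorphism of $\mathcal{L}$, it intertwines $\theta$ with ${}^\phi\theta$, hence maps fixed points bijectively onto fixed points) are just that verification written out. No gaps.
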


\begin{proof}
  The assertion follows immediately from the definitions.
\end{proof}

\subsection{Involutions}\label{subsect: involution}
Proposition \ref{prop: conjugation} implies that as far as we are interested in the algebraic structure of a twisted loop algebra $\mathcal{L}^{\mathrm{tw}}(\mathfrak{g},\theta)$ of the second kind, we may freely replace the automorphism $\theta$ by its conjugation.
When $\theta = \mathrm{id}$, there is no choice.
Hence, let us assume that $\theta \neq \mathrm{id}$.
The automorphisms of finite order on each simple Lie algebra is classified in \cite[Section 8.6]{Kac83}.
In our case, it is formulated as follows.

Let $I = \{ 1,\dots,n \}$ be a Dynkin diagram of irreducible finite type, say $X_n$, and $\mathfrak{g} = \mathfrak{g}(I)$ denote the corresponding simple Lie algebra with Chevalley generators $\{ e_i,f_i,h_i \mid i \in I \}$.
Let $\mathfrak{h} \subset \mathfrak{g}$ denote the Cartan subalgebra generated by $\{ h_i \mid i \in I \}$, $\alpha_i \in \mathfrak{h}^*$ the root of $e_i$ for each $i \in I$, and $\Phi^+$ the set of positive roots.
For each $\alpha \in \Phi^+$, let $\mathfrak{g}_{\pm \alpha}$ denote the root space of root $\pm \alpha$.

Let $\operatorname{Aut}(I)$ denote the group of automorphisms on $I$.
Let $\mu \in \operatorname{Aut}(I)$ be such that $\mu^2 = \mathrm{id}$, and $r \in \{ 1,2 \}$ denote the order of $\mu$.
There exists a unique Lie algebra automorphism $\mu$ on $\mathfrak{g}$ such that
\[
  \mu(e_i) = e_{\mu(i)}, \quad \mu(f_i) = f_{\mu(i)}, \quad \mu(h_i) = h_{\mu(i)} \text{ } \text{ for all } i \in I.
\]

Let $\tilde{I} = \{ 0,1,\dots,l \}$ denote the Dynkin diagram of affine type $X_n^{(r)}$.
We slightly change the numbering of vertices from \cite{Kac83} so that $\{ 1,\dots,l \}$ in $I$ form a complete set of representatives for the $\mu$-orbits in $I$.
For example, the vertices $4,5,6$ for the Dynkin diagram of type $E_6$ in \cite{Kac83} are $5,6,4$ here.
Define $\alpha^0 \in \Phi^+$ and $h_0 \in \mathfrak{h}$ by
\[
  \alpha^0 := \begin{cases}
    \sum_{j=1}^l a_j \alpha_j & \text{ if } r = 1, \\
    \alpha_1 + \dots + \alpha_l & \text{ if } X_n^{(r)} = D_{l+1}^{(2)}, \\
    \alpha_1 + \dots + \alpha_{2l-2} & \text{ if } X_n^{(r)} = A_{2l-1}^{(2)}, \\
    \alpha_1 + 2\alpha_2 + 2\alpha_3 + \alpha_4 + \alpha_5 + \alpha_6 & \text{ if } X_n^{(r)} = E_{6}^{(2)}, \\
    \alpha_1 + \dots + \alpha_{2l} & \text{ if } X_n^{(r)} = A_{2l}^{(2)},
  \end{cases}
\]
\[
  h_0 := \begin{cases}
    -\sum_{j=1}^l a_j^\vee h_j & \text{ if } r = 1, \\
    -(2(h_1 + \dots + h_{l-1}) + h_l + h_{l+1}) & \text{ if } X_n^{(r)} = D_{l+1}^{(2)}, \\
    -(h_1 + 2(h_2 + \dots + h_{2l-2}) + h_{2l-1}) & \text{ if } X_n^{(r)} = A_{2l-1}^{(2)}, \\
    -(2h_1 + 3h_2 + 4h_3 +2h_4 +3h_5 + 2h_6) & \text{ if } X_n^{(r)} = E_{6}^{(2)}, \\
    -(h_1 + \dots + h_{2l}) & \text{ if } X_n^{(r)} = A_{2l}^{(2)},
  \end{cases}
\]
where $a_j$ and $a_j^\vee$ are as in \cite[Sections 4.8, 6.1]{Kac83}.

Then, we can choose root vectors $e_{\pm \alpha^0} \in \mathfrak{g}_{\pm \alpha^0}$ satisfying the following:
Set
\begin{align*}
  &e_0 := \begin{cases}
    e_{-\alpha^0} & \text{ if } r = 1 \text{ or } X_n^{(r)} = A_{2l}^{(2)}, \\
    e_{-\alpha^0} - \mu(e_{-\alpha^0}) & \text{ otherwise},
  \end{cases} \\
  &f_0 := \begin{cases}
    -e_{\alpha^0} & \text{ if } r = 1 \text{ or } X_n^{(r)} = A_{2l}^{(2)}, \\
    -e_{\alpha^0} + \mu(e_{\alpha^0}) & \text{ otherwise}.
  \end{cases}
\end{align*}
Then, $(e_0,f_0,h_0)$ forms an $\mathfrak{sl}_2$-triple:
\begin{align}\label{eq: sl2 triple 0}
  [e_0,f_0] = h_0, \quad [h_0,e_0] = 2e_0, \quad [h_0,f_0] = -2f_0.
\end{align}

Let us extend $\mu$ on $I$ to $I \sqcup \{ 0 \}$ by setting $\mu(0) := 0$.
For each $i,j \in I$, let $a_{i,j} \in \mathbb{Z}$ denote the corresponding Cartan integer.
For each $j \in \{ 0,1,\dots,l \} \subseteq I \sqcup \{ 0 \}$, set
\begin{align*}
  &x_j := \begin{cases}
    e_j & \text{ if } \mu(j) = j, \\
    e_j + e_{\mu(j)} & \text{ if } a_{j,\mu(j)} = 0, \\
    \sqrt{2}(e_j + e_{\mu(j)}) & \text{ if } a_{j,\mu(j)} = -1,
  \end{cases} \\
  &y_j := \begin{cases}
    f_j & \text{ if } \mu(j) = j, \\
    f_j + f_{\mu(j)} & \text{ if } a_{j,\mu(j)} = 0, \\
    \sqrt{2}(f_j + f_{\mu(j)}) & \text{ if } a_{j,\mu(j)} = -1,
  \end{cases} \\
  &w_j := \begin{cases}
    h_j & \text{ if } \mu(j) = j, \\
    h_j + h_{\mu(j)} & \text{ if } a_{j,\mu(j)} = 0, \\
    2(h_j + h_{\mu(j)}) & \text{ if } a_{j,\mu(j)} = -1.
  \end{cases}
\end{align*}
Note that if $X_n^{(r)} \neq A_{2l}^{(2)}$, then we have
\begin{align}\label{eq: w0}
  w_0 = h_0 = -\sum_{j=1}^l a_j^\vee w_j
\end{align}

\begin{prop}[{\cite[Section 8.6]{Kac83}}]\label{prop: classification involution}
  \hfill
  \begin{enumerate}
    \item\label{item: classification involution construction} For each $\mathbf{s} = (s_0,s_1,\dots,s_l) \in \{0,1\}^{\tilde{I}}$ such that $r \sum_{j \in \tilde{I}} a_js_j = 2$, there exists a unique involution $\sigma_\mathbf{s}$ on $\mathfrak{g}$ such that
    \[
      \sigma_\mathbf{s}(x_j) = (-1)^{s_j}x_j \ \text{ for all } j \in \tilde{I}.
    \]
    \item\label{item: classification involution conjugation} Two automorphisms of the form $\sigma_\mathbf{s}$ and $\sigma_{\mathbf{s}'}$ are conjugate to each other if and only if there exists $\tau \in \operatorname{Aut}(\tilde{I})$ such that $s_j = s'_{\tau(j)}$ for all $j \in \tilde{I}$.
    \item\label{item: classification involution exhaustion} Each involution on $\mathfrak{g}$ is conjugate to some $\sigma_\mathbf{s}$.
    \item\label{item: classification involution frk} The fixed point subalgebra $\mathfrak{k}$ of $\mathfrak{g}$ with respect to $\sigma_\mathbf{s}$ is of the form $\mathfrak{k} = \mathfrak{k}' \oplus \mathfrak{z}(\mathfrak{k})$, where $\mathfrak{k}'$ is a semisimple subalgebra of $\mathfrak{g}$ and $\mathfrak{z}(\mathfrak{k})$ denotes the center of $\mathfrak{k}$, which is a subspace of $\bigoplus_{j \in \tilde{I}} \mathbb{C} w_j$ of dimension
    \[
      \sharp\{ j \in \tilde{I} \mid s_j = 1 \}-1.
    \]
    Moreover, the Dynkin diagram of $\mathfrak{k}'$ is isomorphic to $\{ j \in \tilde{I} \mid s_j = 0 \}$, and the elements $\{ x_j,y_j,w_j \mid j \in \tilde{I} \text{ such that } s_j = 0 \}$ are the Chevalley generators.
  \end{enumerate}
\end{prop}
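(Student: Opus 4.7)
The plan is to derive the classification from the affine Kac-Moody algebra machinery, specialized to order $2$. The elements $\{x_j, y_j, w_j \mid j \in \tilde{I}\}$ together with the relation \eqref{eq: w0} (and its $A_{2l}^{(2)}$ counterpart) realize inside $\mathfrak{g}$ the Chevalley-type generators of the derived affine Kac-Moody algebra of type $X_n^{(r)}$ modulo its center. This is the loop-style presentation of $\mathfrak{g}$ implicit in Kac's normal form, and it is the bridge used to pull back $\mathbb{Z}/2\mathbb{Z}$-gradations of the affine algebra to involutions of $\mathfrak{g}$.

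For part \eqref{item: classification involution construction}, the first step is to check that the assignment $x_j \mapsto (-1)^{s_j} x_j$, $y_j \mapsto (-1)^{s_j} y_j$, $w_j \mapsto w_j$ preserves the affine Serre relations indexed by $\tilde{I}$. These relations are homogeneous in each pair $(x_j, y_j)$, so it suffices to verify that the induced $\mathbb{Z}/2\mathbb{Z}$-grading on $\mathfrak{g}$ is well-defined; the numerical condition $r \sum_{j \in \tilde{I}} a_j s_j = 2$ says precisely that the null root of $X_n^{(r)}$ has degree $\equiv 0 \pmod{2}$, so the null relation (between the $w_j$) is respected. Uniqueness is automatic because $\{x_j, y_j \mid j \in \tilde{I}\}$ generates $\mathfrak{g}$.

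Parts \eqref{item: classification involution conjugation}--\eqref{item: classification involution exhaustion} constitute the order-$2$ specialization of Kac's general theorem on finite-order automorphisms of simple Lie algebras; tracing through Kac's parametrization by labels $(s_j)$ with $r \sum a_j s_j = m$ and setting $m = 2$ yields the bijection between conjugacy classes and $\{0,1\}^{\tilde{I}}$-sequences modulo $\operatorname{Aut}(\tilde{I})$. Part \eqref{item: classification involution frk} then follows by inspection: the fixed subalgebra is generated over its Cartan by those $x_j, y_j$ with $s_j = 0$, whose relations are encoded by the subdiagram $\{j \in \tilde{I} \mid s_j = 0\}$ of $\tilde{I}$, and the central complement inside $\bigoplus_{j \in \tilde{I}} \mathbb{C} w_j$ accounts for the remaining dimension. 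The main obstacle is bookkeeping across the five branches in the definitions of $\alpha^0$, $h_0$, $x_j$, $y_j$, $w_j$: the case $X_n^{(r)} = A_{2l}^{(2)}$ is genuinely exceptional (the $\sqrt{2}$-scalings appear and no relation of the form \eqref{eq: w0} holds), so the Serre-relation verification in part \eqref{item: classification involution construction} and the Cartan-dimension count in part \eqref{item: classification involution frk} must be performed separately in that case.
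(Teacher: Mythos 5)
The paper does not actually prove this proposition: it is quoted verbatim from \cite[Section 8.6]{Kac83}, and your proposal ultimately does the same thing, since parts \eqref{item: classification involution conjugation}--\eqref{item: classification involution exhaustion} (the genuinely hard parts) are explicitly deferred to Kac's classification of finite-order automorphisms specialized to $m=2$. At that level your route coincides with the paper's.

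One step of the detail you do supply would not survive if you tried to make it self-contained. For part \eqref{item: classification involution construction} you argue that it suffices to check that $x_j \mapsto (-1)^{s_j}x_j$, $y_j \mapsto (-1)^{s_j}y_j$, $w_j \mapsto w_j$ preserves the affine Serre relations attached to $\tilde{I}$, with uniqueness following from generation. Uniqueness is fine, but preservation of the affine Serre relations only yields an endomorphism of the Kac--Moody algebra $\mathfrak{g}(X_n^{(r)})$ (equivalently of its derived algebra), which is infinite-dimensional; the finite-dimensional $\mathfrak{g}$ is a proper quotient of the twisted loop algebra via evaluation, and the elements $x_j,y_j,w_j$ satisfy many relations beyond the Serre relations. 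So existence of $\sigma_{\mathbf{s}}$ does not follow from a Serre-relation check; one needs Kac's actual construction, e.g.\ the $\mathbb{Z}/2\mathbb{Z}$-gradation of $\mathfrak{g}$ induced from the $\mathbb{Z}$-gradation of type $(\mathbf{s};r)$ of the loop realization, or, concretely, the inner automorphism $\exp(\pi i\,\operatorname{ad}h)$ with $\alpha_j(h)=s_j$ ($j\neq 0$) composed with $\mu$ in the outer case; the condition $r\sum_j a_j s_j = 2$ is exactly what makes this assignment act on $x_0$ by $(-1)^{s_0}$. Your remark that the $A_{2l}^{(2)}$ case needs separate bookkeeping is correct, but note also that the relation \eqref{eq: w0} is a linear relation among the $w_j$, which any map fixing all $w_j$ preserves trivially; the real compatibility to check lives in the root-space gradation, not in that relation.
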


\begin{defi}
  Let $\tilde{I}$ be a Dynkin diagram of affine type, say $X_n^{(r)}$ for some $r \in \{ 1,2 \}$, and $\tilde{S} \subseteq \tilde{I}$ a subset such that $r \sum_{j \in \tilde{S}} a_j = 2$.
  Let $\mathbf{s} = (s_0,s_1,\dots,s_l) \in \{ 0,1 \}^{\tilde{I}}$ denote the sequence such that $s_j = 1$ if and only if $j \in \tilde{S}$, and set $\theta(\tilde{S}) := \sigma_\mathbf{s}$.
  The \emph{twisted loop algebra of the second kind} $\mathcal{L}^{\mathrm{tw}}(\tilde{I},\tilde{S})$ associated with $(\tilde{I}, \tilde{S})$ is the twisted loop algebra of the second kind associated with $(\mathfrak{g},\theta(\tilde{S}))$:
  \[
    \mathcal{L}^{\mathrm{tw}}(\tilde{I},\tilde{S}) := \mathcal{L}^{\mathrm{tw}}(\mathfrak{g},\theta(\tilde{S})).
  \]
\end{defi}

By Proposition \ref{prop: classification involution} \eqref{item: classification involution exhaustion}, each involution on $\mathfrak{g}$ is conjugate to some $\theta(\tilde{S})$.
Following Proposition \ref{prop: classification involution} \eqref{item: classification involution conjugation}, one can make a complete list of representatives for the conjugacy classes of involutions in terms of $(\tilde{I}, \tilde{S})$.
The result is in the end of the paper; Figures \ref{fig 1}--\ref{fig 3}, where we painted the vertices in $\tilde{S}$ black.
The diagrams of finite type in Figure \ref{fig 3} will be explained later.

\subsection{Triangular decompositions and highest weight modules}
Let $I$ be a Dynkin diagram of finite type, and $\mathfrak{g} := \mathfrak{g}(I)$ denote the corresponding semisimple Lie algebra over $\mathbb{C}$ with Chevalley generators $\{ e_i,f_i,h_i \mid i \in I \}$.

For each $\mu \in \operatorname{Aut}(I)$, set
\[
  I^\mu := \{ i \in I \mid \mu(i) = i \}.
\]

Let $\mu \in \operatorname{Aut}(I)$ be such that $\mu^2 = \mathrm{id}$, and $S$ a subset of $I^\mu$.
Then, there exists a unique $\theta := \theta(\mu,S) \in \operatorname{Aut}(\mathfrak{g})$ such that
\begin{align}\label{eq: theta(I,sigma,J)}
  \begin{split}
    \theta(e_i) = \begin{cases}
      e_{\mu(i)} & \text{ if } i \notin S,\\
      -e_i & \text{ if } i \in S,
    \end{cases}
    \quad
    \theta(f_i) = \begin{cases}
      f_{\mu(i)} & \text{ if } i \notin S,\\
      -f_i & \text{ if } i \in S,
    \end{cases}
    \quad
    \theta(h_i) = h_{\mu(i)}.
  \end{split}
\end{align}
Clearly, we have $\theta^2 = \mathrm{id}$.

\begin{defi}
  The \emph{twisted loop algebra of the second kind} $\mathcal{L}^{\mathrm{tw}}(I,\mu,S)$ associated with $(I,\mu,S)$ is the twisted loop algebra of the second kind associated with $(\mathfrak{g}(I),\theta(\mu,S))$:
  \[
    \mathcal{L}^{\mathrm{tw}}(I,\mu,S) := \mathcal{L}^{\mathrm{tw}}(\mathfrak{g}(I),\theta(\mu,S)).
  \]
\end{defi}

\begin{rem}
  Let $I = \{ 1,\dots,n \}$ be a Dynkin diagram of irreducible finite type and $\tilde{I} = \{ 0,1,\dots,l \}$ the corresponding Dynkin diagram of affine type.
  For each $(\tilde{I},\tilde{S})$ in Figures \ref{fig 1}--\ref{fig 3}, we have $\theta(\tilde{S}) = \theta(\mu,S)$, where $S := \tilde{S} \setminus \{ 0 \}$.
  For $(\tilde{I},\tilde{S})$ in Figure \ref{fig 1} or \ref{fig 2}, $(I,S)$ is obtained from it by removing the vertex $0$.
  The Dynkin diagrams of finite type in Figure \ref{fig 3} represents $(I,\mu,S)$.
\end{rem}

Let $\mathcal{L} := L \otimes \mathfrak{g}$ denote the loop algebra, and $\mathcal{L}^\mathrm{tw} := \mathcal{L}^{\mathrm{tw}}(I,\mu,S)$ the twisted loop algebra of the second kind.

Let $\Phi$ denote the set of roots, and $\alpha_i \in \Phi$ the root of $e_i$ for each $i \in I$.
Let $\Phi^\pm \subset \Phi$ denote the set of positive and negative roots, respectively.
For each $\alpha \in \Phi$, let $\mathfrak{g}_\alpha$ denote the root space.
Then, we have a triangular decomposition
\begin{align}\label{eq: tri_decomp_g}
  \mathfrak{g} = \mathfrak{n}^- \oplus \mathfrak{h} \oplus \mathfrak{n}^+,
\end{align}
where
\[
  \mathfrak{n}^\pm := \bigoplus_{\alpha \in \Phi^\pm} \mathfrak{g}_\alpha, \quad \mathfrak{h} := \bigoplus_{i \in I} \mathbb{C} h_i.
\]

By the construction \eqref{eq: theta(I,sigma,J)}, the automorphism $\theta := \theta(\mu,S)$ preserves the triangular decomposition.
Hence, we obtain
\[
  \mathfrak{k} = \mathfrak{k}^- \oplus \mathfrak{k}^0 \oplus \mathfrak{k}^+,\quad \mathfrak{p} = \mathfrak{p}^- \oplus \mathfrak{p}^0 \oplus \mathfrak{p}^+,
\]
where
\[
  \mathfrak{a}^\pm := \mathfrak{a} \cap \mathfrak{n}^\pm, \quad \mathfrak{a}^0 := \mathfrak{a} \cap \mathfrak{h} \quad \text{ for each } \mathfrak{a} \in \{ \mathfrak{k},\mathfrak{p} \}.
\]

The loop algebra $\mathcal{L}$ inherits the triangular decomposition \eqref{eq: tri_decomp_g}:
\begin{align}\label{eq: tri decomp clL}
  \mathcal{L} = \mathcal{L}^- \oplus \mathcal{L}^0 \oplus \mathcal{L}^+,
\end{align}
where
\[
  \mathcal{L}^\pm := L \otimes \mathfrak{n}^\pm, \quad \mathcal{L}^0 := L \otimes \mathfrak{h}.
\]

The automorphism $\theta$ on $\mathcal{L}$ also preserves the triangular decomposition.
Hence, we obtain
\begin{align}\label{eq: tri decomp clLtw}
  \mathcal{L}^\mathrm{tw} = \mathcal{L}^{\mathrm{tw}, -} \oplus \mathcal{L}^{\mathrm{tw}, 0} \oplus \mathcal{L}^{\mathrm{tw}, +},
\end{align}
where
\[
  \mathcal{L}^{\mathrm{tw}, \bullet} := \mathcal{L}^\mathrm{tw} \cap \mathcal{L}^\bullet = (L_+ \otimes \mathfrak{k}^\bullet) \oplus (L_- \otimes \mathfrak{p}^\bullet) \ \text{ for each } \bullet \in \{ -,0,+ \};
\]
see equation \eqref{eq: Lpm} for the definitions of $L_\pm$.

Let $U$ denote the universal enveloping algebra of $\mathcal{L}$, and $U^\bullet$ the subalgebra generated by $\mathcal{L}^\bullet$ for each $\bullet \in \{ -,0,+ \}$.
Then, the triangular decomposition \eqref{eq: tri decomp clL} induces
\[
  U = U^- \otimes U^0 \otimes U^+.
\]

Similarly, let $U^\mathrm{tw}$ denote the universal enveloping algebra of $\mathcal{L}^\mathrm{tw}$, and $U^{\mathrm{tw},\bullet}$ the subalgebra generated by $\mathcal{L}^{\mathrm{tw},\bullet}$ for each $\bullet \in \{ -,0,+ \}$.
Then, the triangular decomposition \eqref{eq: tri decomp clLtw} induces
\begin{align}\label{eq: tri decomp Utw}
  U^\mathrm{tw} = U^{\mathrm{tw},-} \otimes U^{\mathrm{tw},0} \otimes U^{\mathrm{tw},+}.
\end{align}

Now, it is clear how to define the notion of highest weight modules over $\mathcal{L}^{\mathrm{tw}}$.

\begin{defi}
  An $\mathcal{L}^\mathrm{tw}$-module $V$ is said to be a \emph{highest weight module} of highest weight $\phi \in (\mathcal{L}^{\mathrm{tw},0})^* := \operatorname{Hom}_\mathbb{C}(\mathcal{L}^{\mathrm{tw},0},\mathbb{C})$ if there exists a nonzero element $v_\phi \in V$, called a \emph{highest weight vector}, such that
  \begin{itemize}
    \item $\mathcal{L}^{\mathrm{tw},+} v_\phi = 0$,
    \item $w v_\phi = \phi(w) v_\phi$ for all $w \in \mathcal{L}^{\mathrm{tw},0}$,
    \item $U^\mathrm{tw} v_\phi = V$.
  \end{itemize}
\end{defi}

Using the triangular decomposition \eqref{eq: tri decomp Utw}, for each $\phi \in (\mathcal{L}^{\mathrm{tw},0})^*$, we can construct a universal highest weight module $M(\phi)$ of highest weight $\phi$:
\[
  M(\phi) := U^\mathrm{tw}/(U^{\mathrm{tw},>0} + \sum_{w \in \mathcal{L}^{\mathrm{tw},0}} U^\mathrm{tw}(w-\phi(w))),
\]
where $U^{\mathrm{tw},>0}$ denotes the left ideal of $U^{\mathrm{tw}}$ generated by $\mathcal{L}^{\mathrm{tw},+}$.
It is a straightforward analogue of the highest weight theory for simple Lie algebras to verify that each highest weight module of highest weight $\phi$ is a quotient of $M(\phi)$, and $M(\phi)$ has a unique simple quotient $V(\phi)$.
Also, each finite-dimensional module is a sum of highest weight modules.
Therefore, the classification of finite-dimensional irreducible representations is reduced to determining for which $\phi \in (\mathcal{L}^{\mathrm{tw},0})^*$ the module $V(\phi)$ is finite-dimensional.

% \newpage
\section{Equivariant Map Algebras}\label{sect: ema}
In this section, we review the general theory of equivariant map algebras developed in \cite{NSS12}.
Then, we explain that each twisted loop algebra of the second kind is an equivariant map algebra, and apply the theory to it.

As \cite{NSS12}, we refer \cite{EiHa00} for the terminology regarding schemes.

\subsection{General theory}
Let $R$ be a unital associative commutative $\mathbb{C}$-algebra, and $\mathfrak{g}$ a finite-dimensional Lie algebra over $\mathbb{C}$.
Set $X := \operatorname{Spec} R$, the affine scheme over $\mathbb{C}$ associated with $R$.
We also regard $\mathfrak{g}$ as an affine space.

\begin{defi}[{\cite[Definition 2.1]{NSS12}}]
  The \emph{map algebra} associated with $(X,\mathfrak{g})$ is the algebra $M(X,\mathfrak{g})$ of regular maps from $X$ to $\mathfrak{g}$.
\end{defi}

Let $\Gamma$ be a finite group acting on both $X$ and $\mathfrak{g}$.

\begin{defi}[{\cite[Definition 3.1]{NSS12}}]
  The \emph{equivariant map algebra} associated with $(X,\mathfrak{g},\Gamma)$ is the algebra $M(X,\mathfrak{g})^\Gamma$ of $\Gamma$-equivariant regular maps from $X$ to $\mathfrak{g}$.
\end{defi}
  
The algebras of regular maps $X \to \mathfrak{g}$ can be identified with $R \otimes \mathfrak{g}$.
The $\Gamma$-action on $X$ induces a $\Gamma$-action on $R$, and hence on $R \otimes \mathfrak{g}$.
We shall identify $M(X,\mathfrak{g})^\Gamma$ with
\[
  \mathfrak{M} := \{ f \in R \otimes \mathfrak{g} \mid \gamma f = f \ \text{ for each } \gamma \in \Gamma \}.
\]

Let $X_\mathrm{rat}$ denote the set of rational points of $X$.
For each $x \in X_\mathrm{rat}$, set
\begin{align*}
  &\Gamma_x := \{ \gamma \in \Gamma \mid \gamma x = x \}, \\
  &\mathfrak{g}^x := \{ g \in \mathfrak{g} \mid \gamma g = g \ \text{ for each } \gamma \in \Gamma_x \}.
\end{align*}
Then, there exists a surjective Lie algebra homomorphism
\[
  \mathrm{ev}_x : \mathfrak{M} \to \mathfrak{g}^x; \ r \otimes g \mapsto r(x) g,
\]
called the \emph{evaluation map} at $x$.
For each $\mathfrak{g}^x$-module $V$, let $V_x$ denote the $\mathfrak{M}$-module with underlying linear space $V$ and action given by
\[
  f v := \mathrm{ev}_x(f) v \ \text{ for each } f \in \mathfrak{M}, \ v \in V.
\]

For each Lie algebra $\mathfrak{a}$, let $\mathfrak{a}_\mathrm{ab}$ denote its abelianization $\mathfrak{a}/[\mathfrak{a},\mathfrak{a}]$.
Each $\chi \in \mathfrak{a}_\mathrm{ab}^* := \operatorname{Hom}_\mathbb{C}(\mathfrak{a}_\mathrm{ab},\mathbb{C})$ defines a one-dimensional representation of $\mathfrak{a}$, which we denote by $\mathbb{C}_\chi$.

\begin{thm}[{\cite[Theorem 5.5]{NSS12}}]\label{thm: classification fd irrep ema}
  Let $V$ be a finite-dimensional simple $\mathfrak{M}$-module.
  Then, there exist $\chi \in \mathfrak{M}_\mathrm{ab}^*$, $n \in \mathbb{Z}_{\geq 0}$, $x_1,\dots,x_n \in X_\mathrm{rat}$ with distinct $\Gamma$-orbits, and a finite-dimensional simple $\mathfrak{g}^{x_i}$-module $V_i$ for each $i = 1,\dots,n$ such that
  \[
    V \simeq \mathbb{C}_\chi \otimes (V_1)_{x_1} \otimes \cdots \otimes (V_n)_{x_n}.
  \]
\end{thm}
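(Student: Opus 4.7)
The plan is to reduce the problem to a finite-dimensional quotient of $\mathfrak{M}$, decompose that quotient into a product of local pieces indexed by $\Gamma$-orbits of rational points via the Chinese Remainder Theorem, and then analyze the simple modules of each local piece by separating its reductive and nilpotent parts. First I would use that since $V$ is finite-dimensional, the annihilator $\operatorname{Ann}_\mathfrak{M}(V)$ is a finite-codimension Lie-algebra ideal of $\mathfrak{M}$, so the action factors through a finite-dimensional quotient Lie algebra. The crucial technical lemma is that any finite-codimension ideal of $\mathfrak{M}$ must contain $(I \otimes \mathfrak{g})^\Gamma$ for some $\Gamma$-stable cofinite ideal $I \subseteq R$; granting this, $V$ becomes a module for
\[
  \mathfrak{M}/(I \otimes \mathfrak{g})^\Gamma \;\cong\; \bigl((R/I) \otimes \mathfrak{g}\bigr)^\Gamma.
\]

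The support of $R/I$ is a finite $\Gamma$-stable subset of $X_{\mathrm{rat}}$; choosing orbit representatives $x_1,\dots,x_n$ and combining the Chinese Remainder Theorem with passage to $\Gamma$-invariants, I would obtain a Lie-algebra product decomposition
\[
  \bigl((R/I) \otimes \mathfrak{g}\bigr)^\Gamma \;\cong\; \prod_{i=1}^n \bigl(A_i \otimes \mathfrak{g}\bigr)^{\Gamma_{x_i}},
\]
where $A_i$ is the local Artinian factor at the orbit of $x_i$. By a standard Jacobson density / Schur argument (using $\operatorname{End}_\mathfrak{M}(V) = \mathbb{C}$), any finite-dimensional simple module of this finite product is an outer tensor product of simple modules of the individual factors, so it suffices to classify the simple modules of each $\mathfrak{M}_i := (A_i \otimes \mathfrak{g})^{\Gamma_{x_i}}$.

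For each $i$, writing $A_i = \mathbb{C} \oplus \mathfrak{m}_i$ with $\mathfrak{m}_i$ the nilpotent maximal ideal yields a Levi-style semidirect product $\mathfrak{M}_i = \mathfrak{g}^{x_i} \ltimes \mathfrak{n}_i$, where $\mathfrak{n}_i := (\mathfrak{m}_i \otimes \mathfrak{g})^{\Gamma_{x_i}}$ is a nilpotent ideal and the evaluation map $\mathrm{ev}_{x_i}$ coincides with the projection onto the reductive part $\mathfrak{g}^{x_i}$. On a finite-dimensional simple $\mathfrak{M}_i$-module $W_i$, the nilpotent ideal $\mathfrak{n}_i$ acts by a single character $\chi_i$ that must vanish on $[\mathfrak{M}_i, \mathfrak{n}_i]$ (via the standard Dixmier/Lie-theorem argument for a nilpotent ideal acting on a simple module), and consequently $W_i \cong \mathbb{C}_{\chi_i} \otimes (V_i)_{x_i}$ for some finite-dimensional simple $\mathfrak{g}^{x_i}$-module $V_i$. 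Bundling the individual characters $\chi_i$ into a single $\chi \in \mathfrak{M}_{\mathrm{ab}}^*$ (pulled back to all of $\mathfrak{M}$ via the quotient by $[\mathfrak{M},\mathfrak{M}]$) produces the asserted decomposition.

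The main obstacle I anticipate is precisely the first reduction: proving that every cofinite Lie-algebra ideal of $\mathfrak{M}$ contains $(I \otimes \mathfrak{g})^\Gamma$ for some cofinite $\Gamma$-stable $I \subseteq R$. This translates a purely Lie-theoretic finiteness condition into a commutative-algebraic finiteness condition on $R$, and even in the untwisted case $\Gamma = 1$ it already relies on $[\mathfrak{g},\mathfrak{g}] = \mathfrak{g}$ (which holds in the semisimple case of interest) together with care in handling the $\Gamma$-averaging. A workable strategy is to define $I$ as the largest ideal of $R$ with $(I \otimes \mathfrak{g})^\Gamma$ contained in the given ideal and then bound $\dim_\mathbb{C}(R/I)$ by the codimension of the Lie-algebra ideal, exploiting that $\mathfrak{g}$ is perfect to propagate the ideal from $\mathfrak{M}$ back to $R$; turning this into a rigorous argument, in the presence of a nontrivial $\Gamma$-action that need not be free, is the technical heart of the proof.
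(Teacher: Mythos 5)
The paper does not prove this statement: it is quoted verbatim from \cite[Theorem 5.5]{NSS12} and used as a black box (indeed, the stated purpose of the paper is to reprove its consequences without relying on it), so there is no internal proof to compare your proposal against; I can only assess your outline on its own terms. The later steps of your outline are sound and standard: the Chinese Remainder decomposition of $((R/I)\otimes\mathfrak{g})^\Gamma$ over the $\Gamma$-orbits of the support, the identification of each local factor with $\mathfrak{g}^{x_i}\ltimes\mathfrak{n}_i$ with $\mathfrak{n}_i=(\mathfrak{m}_i\otimes\mathfrak{g})^{\Gamma_{x_i}}$ nilpotent, the Lie/Zassenhaus invariance argument showing $\mathfrak{n}_i$ acts by a character vanishing on $[\mathfrak{M}_i,\mathfrak{n}_i]$ (which extends by zero on $\mathfrak{g}^{x_i}$ to a character of $\mathfrak{M}_i$), and the outer-tensor factorization of a simple module over a finite product of Lie algebras.

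The genuine gap is exactly the step you flag but do not prove, and it is worse than a technicality: the claim that every finite-codimension ideal of $\mathfrak{M}$ (or even just $\operatorname{Ann}_{\mathfrak{M}}(V)$) contains $(I\otimes\mathfrak{g})^\Gamma$ for a $\Gamma$-stable cofinite ideal $I\subseteq R$ is false in the generality in which the theorem is stated, since $\mathfrak{g}$ is only assumed to be a finite-dimensional Lie algebra. Already for $\mathfrak{g}=\mathbb{C}$ abelian, $\Gamma$ trivial and $R=\mathbb{C}[t]$, a one-dimensional module $\mathbb{C}_\varphi$ with $\varphi(t^k)$ chosen so that the sequence satisfies no linear recurrence (e.g.\ $\varphi(t^k)=2^{k^2}$) has annihilator $\ker\varphi$, which contains no nonzero ideal of $\mathbb{C}[t]$ at all; the theorem still holds there (with $n=0$), but your first reduction does not. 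So the strategy must either be restricted to perfect $\mathfrak{g}$ or reorganized as in \cite{NSS12}, where the one-dimensional factor $\mathbb{C}_\chi$ is split off first (the image of $\mathfrak{M}$ in $\operatorname{End}(V)$ is reductive with center acting by scalars, and the evaluation analysis is applied to the semisimple part). Even in the case relevant to this paper ($\mathfrak{g}$ semisimple, $\Gamma=\mathbb{Z}/2\mathbb{Z}$ acting non-freely on $X$ and nontrivially on $\mathfrak{g}$), the untwisted argument you sketch (take $I=\{r\in R\mid r\otimes\mathfrak{g}\subseteq\ker\}$, use perfectness and a finite intersection of finite-codimension subspaces) does not carry over verbatim, because elements $r\otimes x$ need not lie in $\mathfrak{M}$; making the $\Gamma$-equivariant version rigorous is precisely the unproven technical heart, so as it stands the proposal is a plausible blueprint rather than a proof.
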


\subsection{Twisted loop algebras of the second kind as equivariant map algebras}\label{subsect: tla2 as ema}
Let $\mathfrak{g}$ be a semisimple Lie algebra over $\mathbb{C}$, $\theta \in \operatorname{Aut}(\mathfrak{g})$ such that $\theta^2 = \mathrm{id}$, and $\mathcal{L}^{\mathrm{tw}} := \mathcal{L}_2(\mathfrak{g},\theta)$ the corresponding twisted loop algebra of the second kind.
Set $R := \mathbb{C}[t,t^{-1}]$, $X := \operatorname{Spec} R$, and $\Gamma := \mathbb{Z}/2\mathbb{Z} = \{ e,\gamma \}$ with identity element $e$.
Let $\Gamma$ act on $R$ by $\gamma t := t^{-1}$, and on $\mathfrak{g}$ by $\gamma x := \theta(x)$.
Then, the equivariant map algebra associated with $(\mathfrak{g},X,\Gamma)$ is isomorphic to $\mathcal{L}^{\mathrm{tw}}$.

We have $X_\mathrm{rat} = \mathbb{C}^\times$, and $\gamma x = x^{-1}$ for all $x \in \mathbb{C}^\times$.
Hence, for each $x \in \mathbb{C}^\times$, we have
\[
  \Gamma_x = \begin{cases}
    \Gamma & \text{ if } x = \pm 1,\\
    \{ e \} & \text{ if } x \neq \pm 1.
  \end{cases}
\]
Consequently,
\[
  \mathfrak{g}^x = \begin{cases}
    \mathfrak{k} & \text{ if } x = \pm 1,\\
    \mathfrak{g} & \text{ if } x \neq \pm 1.
  \end{cases}
\]

Let $P^+$ and $P_\mathfrak{k}^+$ denote the set of dominant integral weights of $\mathfrak{g}$ and $\mathfrak{k}$, respectively (by Proposition \ref{prop: classification involution} \eqref{item: classification involution frk}, $\mathfrak{k}$ is reductive).
For each $\lambda \in P^+$ and $\nu \in P_\mathfrak{k}^+$, let $V(\lambda)$ and $V_\mathfrak{k}(\nu)$ denote the simple $\mathfrak{g}$-module of highest weight $\lambda$ with highest weight vector $v_\lambda$ and the simple $\mathfrak{k}$-module of highest weight $\nu$ with highest weight vector $v_\nu$, respectively.

For each $n \in \mathbb{Z}_{\geq 0}$, set $\mathcal{C}_n := \{ (\alpha_1,\dots,\alpha_n) \in \mathbb{C}^\times \mid \alpha_i \neq \alpha_j^{\pm 1} \ \text{ for all } i \neq j \}$.
Namely, $\mathcal{C}_n$ is the set of $n$-tuples of rational points in $X$ with distinct $\Gamma$-orbits.

For each $\chi \in (\mathcal{L}^{\mathrm{tw}}_\mathrm{ab})^*$, ${\boldsymbol \nu} = (\nu_1,\nu_{-1}) \in (P_\mathfrak{k}^+)^2$, $n \in \mathbb{Z}_{\geq 0}$, ${\boldsymbol \lambda} = ( \lambda_1,\dots,\lambda_n) \in (P^+)^n$, and ${\boldsymbol \alpha} = (\alpha_1,\dots,\alpha_n) \in \mathcal{C}_n$, set
\[
  V(\chi,{\boldsymbol \nu},{\boldsymbol \lambda})_{{\boldsymbol \alpha}} := \mathbb{C}_\chi \otimes V_\mathfrak{k}(\nu_1)_1 \otimes V_\mathfrak{k}(\nu_{-1})_{-1} \otimes V(\lambda_1)_{\alpha_1} \otimes \cdots \otimes V(\lambda_n)_{\alpha_n}.
\]
Also, define $\phi(\chi,{\boldsymbol \nu},{\boldsymbol \lambda})_{{\boldsymbol \alpha}} \in (\mathcal{L}^{\mathrm{tw},0})^*$ by
\begin{align*}
  &\phi(\chi,{\boldsymbol \nu},{\boldsymbol \lambda})_{{\boldsymbol \alpha}}(P(t) \otimes h)
  :=
  \langle \overline{P(t) \otimes h}, \chi \rangle
  +
  \sum_{j=1,-1}P(j) \langle h, \nu_j \rangle
  +
  \sum_{i=1}^n P(\alpha_i) \langle h, \lambda_i \rangle,\\
  &\phi(\chi,{\boldsymbol \nu},{\boldsymbol \lambda})_{{\boldsymbol \alpha}}(Q(t) \otimes h')
  :=
  \langle \overline{Q(t) \otimes h'}, \chi \rangle
  +
  \sum_{i=1}^n Q(\alpha_i) \langle h', \lambda_i \rangle,
\end{align*}
for each $P(t) \in L_+$, $h \in \mathfrak{k}^0$, $Q(t) \in L_-$, $h' \in \mathfrak{p}^0$, where $\overline{\mathfrak{x}} \in \mathcal{L}^{\mathrm{tw}}_\mathrm{ab}$ denotes the image of $\mathfrak{x} \in \mathcal{L}^{\mathrm{tw}}$ under the quotient map.

\begin{prop}
  Let $V$ be a finite-dimensional simple $\mathcal{L}^{\mathrm{tw}}$-module.
  Then, there exist $\chi \in (\mathcal{L}^{\mathrm{tw}}_\mathrm{ab})^*$, ${\boldsymbol \nu} = (\nu_1,\nu_{-1}) \in (P_\mathfrak{k}^+)^2$, $n \in \mathbb{Z}_{\geq 0}$, ${\boldsymbol \lambda} = ( \lambda_1,\dots,\lambda_n) \in (P^+)^n$, and ${\boldsymbol \alpha} = (\alpha_1,\dots,\alpha_n) \in \mathcal{C}_n$ such that
  \[
    V \simeq V(\phi(\chi,{\boldsymbol \nu},{\boldsymbol \lambda})_{{\boldsymbol \alpha}}).
  \]
\end{prop}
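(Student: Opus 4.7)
The plan is to derive the proposition by combining Theorem \ref{thm: classification fd irrep ema} applied to the equivariant map algebra realization of $\mathcal{L}^{\mathrm{tw}}$ from Section \ref{subsect: tla2 as ema} with the highest weight formalism developed at the end of Section \ref{sect: tla2}. The first step is to invoke Theorem \ref{thm: classification fd irrep ema} directly. The $\Gamma$-orbits on $X_\mathrm{rat} = \mathbb{C}^\times$ are exactly $\{1\}$, $\{-1\}$, and the two-element orbits $\{\alpha,\alpha^{-1}\}$ for $\alpha \neq \pm 1$, and the stabilizer computation in Section \ref{subsect: tla2 as ema} gives $\mathfrak{g}^{\pm 1}=\mathfrak{k}$ and $\mathfrak{g}^{\alpha}=\mathfrak{g}$ at generic points. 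Since $\mathfrak{k}$ is reductive by Proposition \ref{prop: classification involution} \eqref{item: classification involution frk}, its finite-dimensional simple modules are precisely the $V_\mathfrak{k}(\nu)$ for $\nu\in P_\mathfrak{k}^+$. By padding with $\nu_j=0$ whenever the point $j\in\{1,-1\}$ is absent from the list of evaluation points supplied by Theorem \ref{thm: classification fd irrep ema}, and by collecting the remaining evaluation points (one per generic orbit) into a tuple ${\boldsymbol\alpha}\in\mathcal{C}_n$ with accompanying weights ${\boldsymbol\lambda}\in (P^+)^n$, one obtains an isomorphism $V\simeq V(\chi,{\boldsymbol\nu},{\boldsymbol\lambda})_{{\boldsymbol\alpha}}$ for some $\chi\in(\mathcal{L}^{\mathrm{tw}}_\mathrm{ab})^*$ and ${\boldsymbol\nu}\in(P_\mathfrak{k}^+)^2$.

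The second step is to produce a highest weight vector inside $V(\chi,{\boldsymbol\nu},{\boldsymbol\lambda})_{{\boldsymbol\alpha}}$, namely
\[
  v := 1\otimes v_{\nu_1}\otimes v_{\nu_{-1}}\otimes v_{\lambda_1}\otimes\cdots\otimes v_{\lambda_n}.
\]
For $w\in\mathcal{L}^{\mathrm{tw},+}=(L_+\otimes\mathfrak{k}^+)\oplus(L_-\otimes\mathfrak{p}^+)$, each evaluation $\mathrm{ev}_x(w)$ lies in $\mathfrak{n}^+\cap\mathfrak{g}^x$ and therefore annihilates the highest weight vector at $x$; here one uses the elementary identity $Q(\pm 1)=0$ for $Q\in L_-$, which forces the $L_-\otimes\mathfrak{p}^+$ summand to evaluate to zero on the two special tensor factors $V_\mathfrak{k}(\nu_{\pm 1})_{\pm 1}$. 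For $w\in\mathcal{L}^{\mathrm{tw},0}$, writing $w=P\otimes h + Q\otimes h'$ with $P\in L_+$, $h\in\mathfrak{k}^0$, $Q\in L_-$, $h'\in\mathfrak{p}^0$, the Leibniz rule applied factor by factor — together with the same vanishing $Q(\pm 1)=0$ eliminating the $\nu_{\pm 1}$-contributions of the $Q\otimes h'$ term — yields exactly $w\cdot v=\phi(\chi,{\boldsymbol\nu},{\boldsymbol\lambda})_{{\boldsymbol\alpha}}(w)\,v$ by comparison with the defining formulas.

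The third step is the conclusion. Since $V$ is simple and $v$ is a nonzero common eigenvector of $\mathcal{L}^{\mathrm{tw},0}$ killed by $\mathcal{L}^{\mathrm{tw},+}$, simplicity forces $U^\mathrm{tw}v=V$, so $V$ is a highest weight module of highest weight $\phi(\chi,{\boldsymbol\nu},{\boldsymbol\lambda})_{{\boldsymbol\alpha}}$. By the uniqueness of the simple quotient $V(\phi(\chi,{\boldsymbol\nu},{\boldsymbol\lambda})_{{\boldsymbol\alpha}})$ of $M(\phi(\chi,{\boldsymbol\nu},{\boldsymbol\lambda})_{{\boldsymbol\alpha}})$ noted in Section \ref{sect: tla2}, the claimed isomorphism follows. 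The only non-formal ingredient beyond unwinding the definitions is the identity $Q(\pm 1)=0$ for $Q\in L_-$, so I do not anticipate any serious obstacle once the dictionary between $\mathcal{L}^{\mathrm{tw}}$ and the equivariant map algebra is in place.
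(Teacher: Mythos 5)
Your proposal is correct and follows essentially the same route as the paper: invoke Theorem \ref{thm: classification fd irrep ema} to write $V \simeq V(\chi,{\boldsymbol \nu},{\boldsymbol \lambda})_{\boldsymbol \alpha}$, check that $v = 1\otimes v_{\nu_1}\otimes v_{\nu_{-1}}\otimes v_{\lambda_1}\otimes\cdots\otimes v_{\lambda_n}$ is a highest weight vector of weight $\phi(\chi,{\boldsymbol \nu},{\boldsymbol \lambda})_{\boldsymbol \alpha}$, and use simplicity of $V$ to identify it with $V(\phi(\chi,{\boldsymbol \nu},{\boldsymbol \lambda})_{\boldsymbol \alpha})$. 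You merely spell out the verification (orbit/stabilizer bookkeeping and the vanishing $Q(\pm 1)=0$ for $Q\in L_-$) that the paper labels as straightforward.
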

\begin{proof}
  By Theorem \ref{thm: classification fd irrep ema}, there exist $\chi,{\boldsymbol \nu},n,{\boldsymbol \lambda},{\boldsymbol \alpha}$ such that $V \simeq V(\chi,{\boldsymbol \nu},{\boldsymbol \lambda})_{\boldsymbol \alpha}$.
  Set
  \[
    v := 1 \otimes v_{\nu_1} \otimes v_{\nu_{-1}} \otimes v_{\lambda_1} \otimes \cdots \otimes v_{\lambda_n} \in V(\chi,{\boldsymbol \nu},{\boldsymbol \lambda})_{\boldsymbol \alpha}.
  \]
  Then, it is straightforwardly verified that $U^\mathrm{tw} v \simeq V(\phi(\chi,{\boldsymbol \nu},{\boldsymbol \lambda})_{\boldsymbol \alpha})$.
  Since $V$ is simple, so is $V(\chi,{\boldsymbol \nu},{\boldsymbol \lambda})_{{\boldsymbol \alpha}}$.
  Hence, we have $U^\mathrm{tw} v = V(\chi,{\boldsymbol \nu},{\boldsymbol \lambda})_{\boldsymbol \alpha}$.
  Summarizing, we obtain
  \[
    V \simeq V(\chi,{\boldsymbol \nu},{\boldsymbol \lambda})_{\boldsymbol \alpha} = U^\mathrm{tw} v \simeq V(\phi(\chi,{\boldsymbol \nu},{\boldsymbol \lambda})_{\boldsymbol \alpha}).
  \]
  Thus, we complete the proof.
\end{proof}

The aim of this paper is to prove this proposition (in a different form) without bypassing the theory of equivariant map algebras.

% \newpage
\section{Preliminaries for the classification}\label{sect: prelim}
This section collects preliminary results that will be used in the next section, where our main result, classification of the finite-dimensional irreducible representation of each twisted loop algebra of the second kind, is proved.

\subsection{Power sum symmetric polynomials}
Let $n \in \mathbb{Z}_{\geq 0}$, and consider the $n$ variables $x_1,\dots,x_n$.
For each $k \in \mathbb{Z}_{> 0}$, let $p_k(x_1,\dots,x_n)$ denote the $k$-th power sum symmetric polynomial in $x$:
\[
  p_k(x_1,\dots,x_n) := \sum_{i=1}^n x_i^k.
\]
Let $z$ be an indeterminate.
Then, the generating function is described as follows:
\[
  P(x_1,\dots,x_n;z) := \sum_{k \geq 1} p_k(x_1,\dots,x_n) z^{k-1} = \sum_{i=1}^n \frac{x_i}{1-x_iz}.
\]

More generally, with another family of variables $y_1,\dots,y_n$, consider the following polynomials:
\[
  p'_k(x_1,\dots,x_n;y_1,\dots,y_n) := \sum_{i=1}^n y_i x_i^{k-1}.
\]
Then, we have
\begin{align}\label{eq: generating function for p'}
  P'(x_1,\dots,x_n;y_1,\dots,y_n;z) := \sum_{k \geq 1} p'_k(x_1,\dots,x_n;y_1,\dots,y_n) z^{k-1} = \sum_{i=1}^n \frac{y_i}{1-x_iz}.
\end{align}

\begin{prop}\label{prop: coincidence of p'}
  Let $m,n \in \mathbb{Z}_{\geq 0}$, $\alpha_i,\beta_i,\gamma_j,\delta_j \in \mathbb{C}$ $(i=1,\dots,m,j=1,\dots,n)$.
  If
  \[
    p'_k(\alpha_1,\dots,\alpha_m;\beta_1,\dots,\beta_m) = p'_k(\gamma_1,\dots,\gamma_n;\delta_1,\dots,\delta_n) \ \text{ for all } k \in \mathbb{Z}_{> 0},
  \]
  then, we have
  \[
    \sum_{\substack{1 \leq i \leq m\\\alpha_i=\epsilon}} \beta_i = \sum_{\substack{1 \leq j \leq n\\\gamma_j=\epsilon}} \delta_j \ \text{ for all } \epsilon \in \mathbb{C}.
  \]
  In particular, if
  \[
    p_k(\alpha_1,\dots,\alpha_m) = p_k(\gamma_1,\dots,\gamma_n) \ \text{ for all } k \in \mathbb{Z}_{> 0},
  \]
  then we have
  \[
    \sharp\{ i \mid \alpha_i = \epsilon \} = \sharp\{ j \mid \gamma_j = \epsilon \} \ \text{ for all } \epsilon \in \mathbb{C}^\times.
  \]
\end{prop}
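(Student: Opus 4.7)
The plan is to package the hypothesis as an equality of rational functions of $z$ via the generating function \eqref{eq: generating function for p'}, and then read off the desired sums by a partial-fraction argument.

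First, I would translate the assumption $p'_k(\alpha_1,\dots,\alpha_m;\beta_1,\dots,\beta_m) = p'_k(\gamma_1,\dots,\gamma_n;\delta_1,\dots,\delta_n)$ for all $k \geq 1$ into the identity
\[
  \sum_{i=1}^m \frac{\beta_i}{1-\alpha_i z} = \sum_{j=1}^n \frac{\delta_j}{1-\gamma_j z}
\]
of formal power series in $z$; since both sides are manifestly rational in $z$, the equality also holds as rational functions. Next, I would group by common values of the $\alpha_i$ and $\gamma_j$: for each $\epsilon \in \mathbb{C}$, set $A_\epsilon := \sum_{\alpha_i = \epsilon} \beta_i$ and $C_\epsilon := \sum_{\gamma_j = \epsilon} \delta_j$, so that the equation becomes
\[
  \sum_{\epsilon} \frac{A_\epsilon}{1-\epsilon z} = \sum_{\epsilon} \frac{C_\epsilon}{1-\epsilon z},
\]
where $\epsilon$ ranges over the finite set of values occurring among the $\alpha_i$ or $\gamma_j$.

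For each $\epsilon \neq 0$, the summand $A_\epsilon/(1-\epsilon z)$ is the only term on either side with a pole at $z = 1/\epsilon$, with residue $-A_\epsilon/\epsilon$; matching residues at that pole gives $A_\epsilon = C_\epsilon$. The remaining case $\epsilon = 0$ follows by evaluating both sides at $z = 0$: one obtains $\sum_\epsilon A_\epsilon = \sum_\epsilon C_\epsilon$, which together with the previous equalities forces $A_0 = C_0$. This proves the first assertion. For the \emph{In particular} claim, I would specialize $\beta_i := \alpha_i$ and $\delta_j := \gamma_j$, so that $p'_k$ coincides with $p_k$ on both sides, and the first part applied to the quadruple $(\alpha;\alpha)$, $(\gamma;\gamma)$ yields $\epsilon \cdot \sharp\{i : \alpha_i = \epsilon\} = \epsilon \cdot \sharp\{j : \gamma_j = \epsilon\}$; dividing through by $\epsilon$ for $\epsilon \in \mathbb{C}^\times$ recovers the stated cardinality equality.

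I do not expect a serious obstacle here, as the argument is essentially the uniqueness of partial-fraction decompositions for rational functions with simple poles. The one mildly subtle point is handling the possibility that $0$ appears among the $\alpha_i$ or $\gamma_j$, since the corresponding term contributes only a constant to the generating function rather than a pole; but this is absorbed cleanly by the constant-term comparison at $z = 0$, so the proof should go through without fuss.
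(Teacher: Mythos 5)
Your proposal is correct and follows essentially the same route as the paper: the generating-function identity \eqref{eq: generating function for p'}, extraction of the coefficient at each nonzero $\epsilon$ by clearing the factor $1-\epsilon z$ (your residue matching is the same operation as the paper's multiplying by $1-\epsilon z$ and substituting $z=\epsilon^{-1}$), recovery of the $\epsilon=0$ case by a constant-term comparison, and the specialization $\beta_i=\alpha_i$, $\delta_j=\gamma_j$ for the second assertion. No gaps.
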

\begin{proof}
  By equation \eqref{eq: generating function for p'}, we have
  \begin{align}\label{eq: comparision of generating funcitons}
    \sum_{i=1}^m \frac{\beta_i}{1-\alpha_iz} = \sum_{j=1}^n \frac{\delta_j}{1-\gamma_jz}.
  \end{align}
  Let $\epsilon \in \mathbb{C}^\times$.
  Multiplying both sides of equation \eqref{eq: comparision of generating funcitons} by $1-\epsilon z$ and then substituting $z = \epsilon^{-1}$, we obtain
  \[
    \sum_{\substack{1 \leq i \leq m\\\alpha_i=\epsilon}} \beta_i = \sum_{\substack{1 \leq j \leq n\\\gamma_j=\epsilon}} \delta_j.
  \]
  This implies that
  \[
    \sum_{\substack{1 \leq i \leq m\\\alpha_i=0}} \beta_i = \sum_{i=1}^m \frac{\beta_i}{1-\alpha_iz} - \sum_{\substack{1 \leq i \leq m\\\alpha_i \neq 0}} \frac{\beta_i}{1-\alpha_iz} = \sum_{j=1}^n \frac{\delta_j}{1-\gamma_jz} - \sum_{\substack{1 \leq j \leq n\\\gamma_j \neq 0}} \frac{\delta_j}{1-\gamma_jz} = \sum_{\substack{1 \leq j \leq n\\\gamma_j=0}} \delta_j.
  \]
  Hence, we complete the proof of the first assertion.

  For the second assertion, observe that
  \[
    p_k(x_1,\dots,x_n) = p'_k(x_1,\dots,x_n;x_1,\dots,x_n) \text{ for all } k \in \mathbb{Z}_{> 0}.
  \]
  Let us apply the first assertion to the case when $\beta_i = \alpha_i$ for all $i = 1,\dots,m$ and $\delta_j = \gamma_j$ for all $j = 1,\dots,n$.
  Then, for each $\epsilon \in \mathbb{C}^\times$, we obtain
  \[
    \epsilon \cdot \sharp\{ i \mid \alpha_i = \epsilon \} = \sum_{\substack{1 \leq i \leq m \\ \alpha_i = \epsilon}} \beta_i = \sum_{\substack{1 \leq j \leq n \\ \gamma_j = \epsilon}} \delta_j = \epsilon \cdot \sharp\{ j \mid \gamma_j = \epsilon \}.
  \]
  Hence, the assertion follows.
\end{proof}

\subsection{Words}
For each set $A$, let $A^* := \bigsqcup_{r \geq 0} A^r$ denote the set of words with letters in $A$.
It forms a monoid with the concatenation $*$ of words as the multiplication and the empty word $()$ as the identity element.
We often identify an element $a \in A$ and the word $(a) \in A^*$ of length one.
For example,
\[
  a*(a_1,\dots,a_r) = (a)*(a_1,\dots,a_r) = (a,a_1,\dots,a_r).
\]

For each $r \in \mathbb{Z}_{\geq 0}$, let $\mathfrak{S}_r$ denote the $r$-th symmetric group.
It naturally acts on $A^r$:
\[
  \tau \cdot (a_1,\dots,a_r) = (a_{\tau^{-1}(1)},\dots,a_{\tau^{-1}(r)}).
\]

Let $A$ be an additive abelian monoid.
For each $\mathbf{a} = (a_1,\dots,a_r) \in A^*$, define $|\mathbf{a}| \in A$ by
\[
  |\mathbf{a}| := \begin{cases}
    0 & \text{ if } \mathbf{a} = (),\\
    a_1 + \dots + a_r & \text{ if } \mathbf{a} \neq ().
  \end{cases}
\]
For each $r \in \mathbb{Z}_{\geq 0}$, we define the addition on $A^r$ componentwise:
\[
  (a_1,\dots,a_r) + (b_1,\dots,b_r) := (a_1+b_1,\dots,a_r+b_r).
\]

For each $r,k \in \mathbb{Z}_{\geq 0}$, let $P(r,k)$ denote the set of subsequences of length $k$ in the sequence $(1,\dots,r)$:
\[
  P(r,k) := \{ (l_1,\dots,l_r) \mid 1 \leq l_1 < \cdots < l_k \leq r \}.
\]
For each $\mathbf{l} = (l_1,\dots,l_k) \in P(r,k)$, let $(l^1,\dots,l^{r-k})$ denote the complement of $\mathbf{l}$ in $(1,\dots,r)$.
Also, for each $\mathbf{a} = (a_1,\dots,a_r) \in A^r$, set
\[
  \mathbf{a}_\mathbf{l} := (a_{l_1},\dots,a_{l_k}), \quad \mathbf{a}^\mathbf{l} := (a_{l^1},\dots,a_{l^{r-k}}).
\]

For each $\mathbf{l} \in P(r,k)$ and $1 \leq j \leq r-k$, let $i(\mathbf{l},j) \in \{ 1,\dots,k+1\}$ be such that
\[
  l_{i(\mathbf{l},j)-1} < l^j < l_{i(\mathbf{l},j)},
\]
where we set $l_0 = 0$ and $l_{k+1} = r+1$.
Also, set
\[
  \mathbf{l} \circledast l^j := (l_1,\dots,l_{i(\mathbf{l},j)-1},l^j,l_{i(\mathbf{l},j)},\dots,i_k) \in P(r,k+1).
\]
Then, the map
\begin{align}\label{eq: (l,j) to (l*lj,i)}
  P(r,k) \times \{ 1,\dots,r-k \} \to P(r,k+1) \times \{ 1,\dots,k+1 \};\ (\mathbf{l},j) \mapsto (\mathbf{l} \circledast l^j,i(\mathbf{l},j))
\end{align}
is a bijection.
Consequently, the map
\begin{align}\label{eq: (l,j) to l*j}
  P(r,k) \times \{ 1,\dots,r-k \} \to P(r,k+1);\ (\mathbf{l},j) \mapsto \mathbf{l} \circledast l^j
\end{align}
is one-to-$(k+1)$.

For each $r \in \mathbb{Z}_{\geq 0}$, let us identify $\mathfrak{S}_r$ with the subgroup of $\mathfrak{S}_{r+1}$ consisting of permutations fixing $r+1$.
Then, the map
\begin{align}\label{eq: Sr r+1 to Sr+1}
  \mathfrak{S}_r \times \{ 1,\dots,r+1 \} \to \mathfrak{S}_{r+1};\ (\tau,i) \mapsto t_{i,r+1} \tau
\end{align}
is a bijection, where $t_{i,r+1}$ denotes the transposition of $i$ and $r+1$.

\subsection{Auxiliary Lie algebra}\label{sect: aux Lie alg}
In this subsection, we consider an analogue of the loop algebra $\mathcal{L}(\mathfrak{sl}_2)$ and perform similar (but coarser) calculation to \cite[Section 7]{Gar78}, which was used in \cite{Cha86} for classifying the integrable representations of affine Lie algebras.

Let $A$ be an additive abelian monoid.
Also, let $e,f,h$ denote the Chevalley basis of $\mathfrak{sl}_2$, and set $\tilde{\mathcal{L}} := \mathbb{C}[A \times \mathbb{Z}_{\geq 0}] \otimes_\mathbb{C} \mathfrak{sl}_2$.
For each $x \in \mathfrak{sl}_2$ and $(a,n) \in A \times \mathbb{Z}_{\geq 0}$, set
\[
  x_{a,n} := (a,n) \otimes x \in \tilde{\mathcal{L}}.
\]
Then, for each $x,y \in \mathfrak{sl}_2$ and $(a,m),(b,n) \in A \times \mathbb{Z}_{\geq 0}$, we have
\begin{align}\label{eq: mul Ltilsl2}
  [x_{a,m},y_{b,n}] = [x,y]_{a+b,m+n}.
\end{align}

By equation \eqref{eq: mul Ltilsl2}, for each $a \in A$, there exists a unique derivation $D_{a}$ on $\tilde{\mathcal{L}}$ such that
\begin{align}\label{eq: def of Dab}
  D_{a}(x_{b,n}) = n x_{a+b,n+1} \text{ } \text{ for all } x \in \mathfrak{sl}_2, \text{ } (b,n) \in A \times \mathbb{Z}_{\geq 0}.
\end{align}

Let $\tilde{U}$ denote the universal enveloping algebra of $\tilde{\mathcal{L}}$.
For each $a \in A$, the derivation $D_a$ on $\tilde{\mathcal{L}}$ is extended to the derivation, denoted again by $D_a$, on $\tilde{U}$.
Also, let $R_a$ denote the right multiplication on $\tilde{U}$ by $h_{a,1}$.
Finally, set
\[
  \tilde{D}_a := R_a - D_a \in \operatorname{End}_\mathbb{C}(\tilde{U}).
\]

Let $\tilde{U}^{> 0}$ denote the left ideal of $\tilde{U}$ generated by $\{ e_{a,n} \mid (a,n) \in A \times \mathbb{Z}_{\geq 0} \}$, and $\tilde{U}^0$ the subalgebra of $\tilde{U}$ generated by $\{ h_{a,n} \mid (a,n) \in A \times \mathbb{Z}_{\geq 0} \}$.

\begin{lem}\label{lem: tilD preserves tilUpos}
  For each $a \in A$, the linear transformation $\tilde{D}_a$ preserves both $\tilde{U}^{> 0}$ and $\tilde{U}^0$.
\end{lem}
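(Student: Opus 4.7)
The plan is to show that each of $R_a$ and $D_a$ preserves $\tilde{U}^{>0}$ and $\tilde{U}^{0}$ separately; the conclusion for $\tilde{D}_a = R_a - D_a$ then follows by linearity.

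For $\tilde{U}^{0}$, the argument is routine. Equation \eqref{eq: mul Ltilsl2} gives $[h_{b,m}, h_{c,n}] = 0$, so $\tilde{U}^{0}$ is a commutative subalgebra and in particular contains $h_{a,1}$; hence right-multiplication by $h_{a,1}$ preserves $\tilde{U}^0$, giving the invariance under $R_a$. For $D_a$, equation \eqref{eq: def of Dab} shows $D_a(h_{b,n}) = n\, h_{a+b,n+1} \in \tilde{U}^{0}$, and since $D_a$ is a derivation whose values on the algebra generators of $\tilde{U}^{0}$ again lie in $\tilde{U}^{0}$, the Leibniz rule forces $D_a(\tilde{U}^{0}) \subseteq \tilde{U}^{0}$.

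For $\tilde{U}^{>0}$, I would write a general element as a sum of expressions $u\, e_{b,n}$ with $u \in \tilde{U}$ and $(b,n) \in A \times \mathbb{Z}_{\geq 0}$. For $R_a$, commute $h_{a,1}$ across the rightmost factor using $[e_{b,n}, h_{a,1}] = [e,h]_{a+b,n+1} = -2\, e_{a+b,n+1}$, obtaining
\[
  u\, e_{b,n}\, h_{a,1} = u\, h_{a,1}\, e_{b,n} - 2\, u\, e_{a+b,n+1},
\]
both summands manifestly lying in the left ideal $\tilde{U}^{>0}$. For $D_a$, the Leibniz rule together with \eqref{eq: def of Dab} yields
\[
  D_a(u\, e_{b,n}) = D_a(u)\, e_{b,n} + n\, u\, e_{a+b,n+1},
\]
which is once more a sum of elements of $\tilde{U}^{>0}$.

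The lemma thus reduces to these two short computations, and I do not anticipate any genuine obstacle. The only point deserving care is that $\tilde{U}^{>0}$ is a \emph{left} ideal, so the commutation of $h_{a,1}$ must be performed across the rightmost factor of $u\, e_{b,n}$ rather than exploited from the other side—the identity above handles exactly this. The motivation for packaging $R_a$ and $D_a$ together into $\tilde{D}_a$ is not visible at the level of this lemma, since both summands preserve the subspaces individually; its purpose will presumably become clear in the subsequent computations in Section \ref{sect: aux Lie alg}, where sign cancellations between $R_a$ and $D_a$ are exploited in analogues of Garland's identities.
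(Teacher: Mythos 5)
Your proof is correct and follows essentially the same route as the paper: you show that $R_a$ and $D_a$ separately preserve $\tilde{U}^{0}$ and the left ideal $\tilde{U}^{>0}$, using the Leibniz rule for $D_a$ and the commutation $u\,e_{b,n}\,h_{a,1} = u(h_{a,1}e_{b,n} - 2e_{a+b,n+1})$ for $R_a$, which is exactly the computation in the paper. The extra details you supply (writing elements of $\tilde{U}^{>0}$ as sums of $u\,e_{b,n}$ and spelling out $D_a(u\,e_{b,n})$) are fine and consistent with the paper's terser argument.
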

\begin{proof}
  It suffices to show that the linear transformations $D_a$ and $R_a$ preserve both $\tilde{U}^{> 0}$ and $\tilde{U}^0$.
  For $D_a$, since it is a derivation, our claim follows from its definition \eqref{eq: def of Dab}.
  For $R_a$, observe that
  \begin{align*}
    &R_{a}(w) = w h_{a,1} \in \tilde{U}^0,\\
    &R_a(ue_{b,n}) = ue_{b,n}h_{a,1} = u(-2e_{a+b,n+1} + h_{a,1}e_{b,n}) \in \tilde{U}^{>0},
  \end{align*}
  for each $w \in \tilde{U}^0$, $u \in \tilde{U}$, and $(b,n) \in A \times \mathbb{Z}_{\geq 0}$.
  These imply our claim.
  Hence, we complete the proof.
\end{proof}

\begin{lem}\label{lem: tilDa(uv)}
  For each $a \in A$ and $u,v \in \tilde{U}$, we have
  \[
    \tilde{D}_a(uv) = u \tilde{D}_a(v) - D_a(u)v.
  \]
\end{lem}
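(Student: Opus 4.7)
The plan is to unpack the definition $\tilde D_a = R_a - D_a$ on both sides of the desired identity and verify equality by direct calculation, exploiting the fact that $D_a$ is a derivation of $\tilde U$ while $R_a$ is an endomorphism of $\tilde U$ given by right multiplication, which satisfies the associative-law identity $R_a(uv) = u R_a(v)$.

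Concretely, first I would expand the left-hand side: by definition
\[
\tilde D_a(uv) = R_a(uv) - D_a(uv) = (uv)h_{a,1} - \bigl(D_a(u)v + u D_a(v)\bigr),
\]
where the first equality is the definition of $\tilde D_a$ and the second uses that $D_a$ is a derivation. Then I would rearrange using associativity, namely $(uv)h_{a,1} = u(vh_{a,1}) = u R_a(v)$, to obtain
\[
\tilde D_a(uv) = u R_a(v) - u D_a(v) - D_a(u) v = u\bigl(R_a(v) - D_a(v)\bigr) - D_a(u)v = u\tilde D_a(v) - D_a(u)v,
\]
which is exactly the right-hand side.

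There is essentially no obstacle here: the lemma is a clean bookkeeping identity expressing the ``twisted Leibniz rule'' for $\tilde D_a$, and it reduces entirely to the Leibniz rule for $D_a$ plus the trivial observation that right multiplication associates with left multiplication. The only subtlety worth flagging is the asymmetry in the final formula — the term $D_a(u)v$ appears with a minus sign and without a compensating $u D_a(v)$ — which is exactly the discrepancy between the (genuine) Leibniz behavior of $D_a$ and the (trivial) associativity behavior of $R_a$. This is precisely why $\tilde D_a$, unlike $D_a$, is not a derivation, and why the corrected rule takes the form stated.
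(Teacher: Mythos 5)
Your proof is correct and is essentially identical to the paper's: both expand $\tilde D_a(uv) = (uv)h_{a,1} - D_a(uv)$, apply the Leibniz rule for $D_a$ and associativity to rewrite $(uv)h_{a,1} = uR_a(v)$, and regroup to get $u\tilde D_a(v) - D_a(u)v$. No further comment is needed.
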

\begin{proof}
  Let us calculate as follows:
  \begin{align*}
    \tilde{D}_a(uv)
    &= uvh_{a,1} - D_a(uv)\\
    &= u R_a(v) - (D_a(u)v + u D_a(v))\\
    &= u \tilde{D}_a(v) - D_a(u)v.
  \end{align*}
  This completes the proof.
\end{proof}

\begin{lem}\label{lem: comm tilD e}
  For each $a,b \in A$ and $u \in \tilde{U}$, we have
  \[
    \tilde{D}_{a}(e_{b,0} u) = e_{b,0} \tilde{D}_{a}(u).
  \]
\end{lem}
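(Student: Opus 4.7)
The plan is to reduce the claim to a direct application of the preceding Lemma \ref{lem: tilDa(uv)} (the product rule for $\tilde{D}_a$) together with the observation that $D_a$ annihilates elements indexed by $n=0$.

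Concretely, I would apply Lemma \ref{lem: tilDa(uv)} with the first factor $e_{b,0}$ and the second factor $u$, obtaining
\[
  \tilde{D}_a(e_{b,0} u) = e_{b,0} \tilde{D}_a(u) - D_a(e_{b,0}) u.
\]
Now, by the defining formula \eqref{eq: def of Dab}, we have $D_a(x_{b,n}) = n\, x_{a+b,n+1}$, and in particular $D_a(e_{b,0}) = 0 \cdot e_{a+b,1} = 0$. Substituting this into the identity above yields the desired equality
\[
  \tilde{D}_a(e_{b,0} u) = e_{b,0} \tilde{D}_a(u).
\]

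There is no real obstacle here: the lemma is an immediate corollary of the product rule of Lemma \ref{lem: tilDa(uv)} combined with the fact that the second index of $e_{b,0}$ is $0$, which kills the derivation $D_a$. The only thing one must be careful about is keeping track of the sign in the product rule, but since the problematic term vanishes, even that is moot.
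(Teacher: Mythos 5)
Your proof is correct and is exactly the paper's argument: apply Lemma \ref{lem: tilDa(uv)} to the product $e_{b,0}u$ and note that $D_a(e_{b,0})=0$ by the definition \eqref{eq: def of Dab}. Nothing further is needed.
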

\begin{proof}
  Since $D_a(e_{b,0}) = 0$ by the definition \eqref{eq: def of Dab}, the assertion follows from Lemma \ref{lem: tilDa(uv)}.
\end{proof}

For each $\mathbf{a} = (a_1,\dots,a_r) \in A^*$, set
\begin{align}\label{eq: def of Dbfa, Dtilbfa}
  &D_{\mathbf{a}} := D_{a_1} \cdots D_{a_r}, \quad \tilde{D}_{\mathbf{a}} := \tilde{D}_{a_1} \cdots \tilde{D}_{a_r}.
\end{align}

\begin{lem}
  For each $r \in \mathbb{Z}_{\geq 0}$ and $\mathbf{a} = (a_1,\dots,a_r) \in A^r$, we have
  \[
    \tilde{D}_{\mathbf{a}}(uv) = \sum_{k=0}^r (-1)^k \sum_{\mathbf{l} \in P(r,k)} D_{\mathbf{a}_\mathbf{l}}(u) \tilde{D}_{\mathbf{a}^\mathbf{l}}(v).
  \]
\end{lem}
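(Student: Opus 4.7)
The plan is to proceed by induction on $r$. The base case $r=0$ is immediate: $D_{()}$ and $\tilde{D}_{()}$ act as the identity, so both sides reduce to $uv$, with $\mathbf{l} = ()$ the unique element of $P(0,0)$.

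For the inductive step, set $\mathbf{b} := (a_1,\dots,a_{r-1})$, so that $\tilde{D}_{\mathbf{a}} = \tilde{D}_{\mathbf{b}} \circ \tilde{D}_{a_r}$. Applying Lemma \ref{lem: tilDa(uv)} to $\tilde{D}_{a_r}(uv)$ gives
\[
\tilde{D}_{\mathbf{a}}(uv) = \tilde{D}_{\mathbf{b}}\bigl(u\,\tilde{D}_{a_r}(v)\bigr) - \tilde{D}_{\mathbf{b}}\bigl(D_{a_r}(u)\,v\bigr),
\]
and I then apply the inductive hypothesis (for $r-1$) to each of the two terms and rearrange the resulting sums.

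For the first term (with $v$ replaced by $\tilde{D}_{a_r}(v)$), a typical summand takes the form $D_{\mathbf{b}_{\mathbf{m}}}(u)\,\tilde{D}_{\mathbf{b}^{\mathbf{m}} * (a_r)}(v)$ for $\mathbf{m} \in P(r-1,k)$, using $\tilde{D}_{\mathbf{b}^{\mathbf{m}}}\tilde{D}_{a_r} = \tilde{D}_{\mathbf{b}^{\mathbf{m}} * (a_r)}$. Identifying $\mathbf{m}$ with the corresponding $\mathbf{l} \in P(r,k)$ satisfying $r \notin \mathbf{l}$, one checks that $\mathbf{a}_{\mathbf{l}} = \mathbf{b}_{\mathbf{m}}$ and $\mathbf{a}^{\mathbf{l}} = \mathbf{b}^{\mathbf{m}} * (a_r)$ (since the sorted complement of $\mathbf{m}$ in $(1,\dots,r-1)$ followed by $r$ is the sorted complement of $\mathbf{l}$ in $(1,\dots,r)$), so this contribution reproduces precisely the part of the target sum over those $\mathbf{l}$ with $r \notin \mathbf{l}$. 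For the second term (with $u$ replaced by $D_{a_r}(u)$), a typical summand reads $D_{\mathbf{b}_{\mathbf{m}} * (a_r)}(u)\,\tilde{D}_{\mathbf{b}^{\mathbf{m}}}(v)$; under the bijection $\mathbf{m} \mapsto \mathbf{m} * (r)$ from $P(r-1,k)$ onto $\{\mathbf{l}' \in P(r,k+1) \mid r \in \mathbf{l}'\}$, one has $\mathbf{a}_{\mathbf{l}'} = \mathbf{b}_{\mathbf{m}} * (a_r)$ and $\mathbf{a}^{\mathbf{l}'} = \mathbf{b}^{\mathbf{m}}$, and the reindexing $k' := k+1$ absorbs the extra minus sign into $(-1)^{k'}$, yielding the part of the sum with $r \in \mathbf{l}'$. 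Since $P(r,k)$ is partitioned according to whether $r \in \mathbf{l}$, summing the two contributions gives the claimed identity.

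The only subtle point I expect is the bookkeeping on the index sets via the two injections $P(r-1,k) \hookrightarrow P(r,k)$ (subsequences avoiding $r$) and $P(r-1,k) \hookrightarrow P(r,k+1)$ (appending $r$); the algebraic manipulations themselves follow mechanically from the fact that $D$'s and $\tilde{D}$'s compose under concatenation of words, together with Lemma \ref{lem: tilDa(uv)}.
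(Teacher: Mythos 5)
Your proof is correct and follows exactly the route the paper intends: the paper's proof consists of the single remark that the identity is ``straightforwardly deduced from Lemma \ref{lem: tilDa(uv)}'', and your induction on $r$, peeling off $\tilde{D}_{a_r}$ and splitting $P(r,k)$ according to whether $r$ belongs to $\mathbf{l}$, is precisely that deduction carried out in detail. The index bookkeeping and the sign reindexing $k' = k+1$ are handled correctly, so there is nothing to add.
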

\begin{proof}
  The assertion can be straightforwardly deduced from Lemma \ref{lem: tilDa(uv)}.
\end{proof}

Let $\equiv_{> 0}$ denote the equivalence relation on $\tilde{U}$ defined by declaring $u \equiv_{>0} v$ to mean that $u-v \in \tilde{U}^{> 0}$.

\begin{lem}\label{lem: er fs}
  For each $r,s \in \mathbb{Z}_{\geq 0}$ such that $r \leq s$, and $\mathbf{a} = (a_1,\dots,a_r) \in A^r$, we have
  \[
    e_{a_1,0} \cdots e_{a_r,0} f_{0,1}^{(s)} \equiv_{>0} \tilde{D}_\mathbf{a}(f_{0,1}^{(s-r)}),
  \]
  where $f_{0,1}^{(n)} := \frac{1}{n!} f_{0,1}^n$ for each $n \in \mathbb{Z}_{\geq 0}$.
\end{lem}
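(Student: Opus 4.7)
The plan is induction on $r$. The base case $r=0$ is immediate, since $\tilde{D}_{()}$ is the empty product and both sides equal $f_{0,1}^{(s)}$. The engine of the induction is the case $r=1$, which I would establish by a direct commutator computation: using $[e_{a,0},f_{0,1}]=h_{a,1}$ and $[h_{a,1},f_{0,1}]=-2f_{a,2}$, together with the commutativity of $f_{0,1}$ and $f_{a,2}$ inside $\tilde{U}$ (since $[f,f]=0$ in $\mathfrak{sl}_2$), one pushes $e_{a,0}$ past $f_{0,1}^{s}$ and divides by $s!$ to get
\[
  e_{a,0}\,f_{0,1}^{(s)} \;=\; f_{0,1}^{(s)}\,e_{a,0} \;+\; f_{0,1}^{(s-1)}h_{a,1} \;-\; f_{0,1}^{(s-2)}f_{a,2}.
\]
A Leibniz computation gives $D_a(f_{0,1}^{(s-1)})=f_{0,1}^{(s-2)}f_{a,2}$, so the last two terms assemble to $\tilde{D}_a(f_{0,1}^{(s-1)}) = f_{0,1}^{(s-1)}h_{a,1} - D_a(f_{0,1}^{(s-1)})$, while $f_{0,1}^{(s)}e_{a,0}\in \tilde{U}^{> 0}$; this settles $r=1$.

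For the inductive step $r\ge 1$, I would peel $e_{a_r,0}$ off the right end of the $e$-string. Since $\tilde{U}^{> 0}$ is a left ideal, left-multiplying the $r=1$ identity by $e_{a_1,0}\cdots e_{a_{r-1},0}$ yields
\[
  e_{a_1,0}\cdots e_{a_r,0}\,f_{0,1}^{(s)} \;\equiv_{> 0}\; e_{a_1,0}\cdots e_{a_{r-1},0}\,\tilde{D}_{a_r}\!\bigl(f_{0,1}^{(s-1)}\bigr).
\]
Iterating Lemma \ref{lem: comm tilD e} pulls $\tilde{D}_{a_r}$ past each $e_{a_i,0}$, turning this into $\tilde{D}_{a_r}\!\bigl(e_{a_1,0}\cdots e_{a_{r-1},0}\,f_{0,1}^{(s-1)}\bigr)$. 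The inductive hypothesis applies (the condition $r\le s$ gives $r-1\le s-1$), rewriting the inner expression modulo $\tilde{U}^{> 0}$ as $\tilde{D}_{a_1}\cdots \tilde{D}_{a_{r-1}}(f_{0,1}^{(s-r)})$; by Lemma \ref{lem: tilD preserves tilUpos} this congruence survives the outer $\tilde{D}_{a_r}$, producing
\[
  \tilde{D}_{a_r}\tilde{D}_{a_1}\cdots \tilde{D}_{a_{r-1}}\!\bigl(f_{0,1}^{(s-r)}\bigr).
\]

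To conclude, I need to reorder into $\tilde{D}_{a_1}\cdots \tilde{D}_{a_r}(f_{0,1}^{(s-r)})$, i.e., to use commutativity $\tilde{D}_a\tilde{D}_b=\tilde{D}_b\tilde{D}_a$. This is a quick independent check: expanding $\tilde{D}_c=R_c-D_c$, one has $[R_a,R_b]=0$ (the $h_{\cdot,1}$'s commute) and $[D_a,D_b]=0$ (as $[D_a,D_b]$ is a derivation vanishing on generators), and the mixed commutators $[R_a,D_b](u)=-u\,h_{a+b,2}$ and $[D_a,R_b](u)=u\,h_{a+b,2}$ cancel in $[\tilde{D}_a,\tilde{D}_b]=-[R_a,D_b]-[D_a,R_b]$. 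The only genuine computation is the $r=1$ base case; everything else is bookkeeping, and the main thing to watch is that the parameter condition $r\le s$ is preserved under the reduction to $(r-1,s-1)$, which it is.
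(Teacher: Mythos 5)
Your proof is correct, and its backbone coincides with the paper's: induction on $r$, with the $r=1$ case settled by exactly the identity $e_{a,0}f_{0,1}^{(s)} = f_{0,1}^{(s-1)}h_{a,1} - f_{0,1}^{(s-2)}f_{a,2} + f_{0,1}^{(s)}e_{a,0}$, and Lemmas \ref{lem: tilD preserves tilUpos} and \ref{lem: comm tilD e} driving the inductive step. The one structural difference is where you peel off a factor: you strip $e_{a_r,0}$ from the right end, next to $f_{0,1}^{(s)}$, and apply the induction hypothesis at $(r-1,s-1)$; this leaves $\tilde{D}_{a_r}$ sitting in front of $\tilde{D}_{a_1}\cdots\tilde{D}_{a_{r-1}}$, so you need the operator identity $[\tilde{D}_a,\tilde{D}_b]=0$, which you verify correctly ($[R_a,R_b]=[D_a,D_b]=0$, and the mixed commutators $[R_a,D_b](u)=-u\,h_{a+b,2}$, $[D_a,R_b](u)=u\,h_{a+b,2}$ cancel). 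The paper avoids this extra step: it first commutes $e_{a_r,0}$ to the far left of the $e$-string (the $e_{a_i,0}$'s commute), applies the induction hypothesis to $e_{a_1,0}\cdots e_{a_{r-1},0}f_{0,1}^{(s)}$ with the same $s$, and only then pushes $e_{a_r,0}$ through $\tilde{D}_{a_1}\cdots\tilde{D}_{a_{r-1}}$ by Lemma \ref{lem: comm tilD e}, so the new factor $\tilde{D}_{a_r}$ lands at the right end and $\tilde{D}_{\mathbf{a}}$ appears in the correct order automatically. Your variant costs one extra (easy) lemma but yields a slightly stronger by-product: the $\tilde{D}_a$'s commute as operators on all of $\tilde{U}$, which in particular gives the symmetry statement $\tilde{D}_{\tau\mathbf{a}}(1)=\tilde{D}_{\mathbf{a}}(1)$ of Proposition \ref{prop: properties of Dtilbfa(1)} directly, without appealing to the commutativity of the $e$'s. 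Both arguments are sound and of essentially the same length.
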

\begin{proof}
  We proceed by induction on $r$.
  The case when $r = 0$ is clear.
  It is straightforwardly verified by induction on $s$ that for each $a \in A$, we have
  \[
    e_{a,0} f_{0,1}^{(s)} = f_{0,1}^{(s-1)} h_{a,1} - f_{0,1}^{(s-2)} f_{a,2} + f_{0,1}^{(s)} e_{a,0} \equiv_{>0} \tilde{D}_{a}(f_{0,1}^{(s-1)}).
  \]
  This proves the assertion for $r = 1$.

  Assume that $r \geq 2$.
  Then, by using Lemmas \ref{lem: tilD preserves tilUpos} and \ref{lem: comm tilD e}, and the induction hypothesis, we can calculate as follows:
  \begin{align*}
    e_{a_1,0} \cdots e_{a_r,0} f_{0,1}^{(s)}
    &= e_{a_r,0} e_{a_1,0} \cdots e_{a_{r-1},0} f_{0,1}^{(s)}\\
    &\equiv_{>0} e_{a_r,0} \tilde{D}_{a_1} \cdots \tilde{D}_{a_{r-1}}(f_{0,1}^{(s-r+1)})\\
    &= \tilde{D}_{a_1} \cdots \tilde{D}_{a_{r-1}}(e_{a_r,0} f_{0,1}^{(s-r+1)})\\
    &\equiv_{>0} \tilde{D}_{a_1} \cdots \tilde{D}_{a_{r-1}}(\tilde{D}_{a_r}(f_{0,1}^{(s-r)}))\\
    &= \tilde{D}_{\mathbf{a}}(f_{0,1}^{(s-r)}).
  \end{align*}
  This completes the proof.
\end{proof}

\begin{prop}\label{prop: er fr+1}
  For each $a \in A$, $r \in \mathbb{Z}_{\geq 0}$, and $\mathbf{a} = (a_1,\dots,a_r) \in A^r$, we have
  \[
    e_{a,0} e_{a_1,0} \cdots e_{a_r,0} f_{0,1}^{(r+1)} \equiv_{>0} \sum_{k=0}^r (-1)^k k! \sum_{\mathbf{l} \in P(r,k)} h_{a+|\mathbf{a}_\mathbf{l}|,k+1} \tilde{D}_{\mathbf{a}^\mathbf{l}}(1).
  \]
\end{prop}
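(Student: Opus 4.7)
The plan is to use Lemma \ref{lem: er fs} to reduce $e_{a,0}e_{a_1,0}\cdots e_{a_r,0} f_{0,1}^{(r+1)}$ to a more tractable expression, expand it via the Leibniz-type formula for $\tilde{D}_\mathbf{a}$, and then commute the leading $e_{a,0}$ past the resulting $f$-factors to produce the $h$-terms in the claim.

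The first step is to apply Lemma \ref{lem: er fs} to the word $\mathbf{a}$ (of length $r$) with $s = r+1$, which gives
\[
e_{a_1,0}\cdots e_{a_r,0} f_{0,1}^{(r+1)} \equiv_{>0} \tilde{D}_\mathbf{a}(f_{0,1}).
\]
Since $\tilde{U}^{>0}$ is a left ideal, we may multiply on the left by $e_{a,0}$ without breaking the congruence. Next, apply the preceding Leibniz-type lemma to $\tilde{D}_\mathbf{a}(f_{0,1}\cdot 1)$ with $u=f_{0,1}$ and $v=1$. A direct induction on $k$, using $D_b(f_{c,n})=n f_{b+c,n+1}$ from the definition \eqref{eq: def of Dab} together with the commutativity of the $f$-elements (since $[f,f]=0$), shows $D_{\mathbf{a}_\mathbf{l}}(f_{0,1}) = k!\,f_{|\mathbf{a}_\mathbf{l}|,k+1}$, so
\[
\tilde{D}_\mathbf{a}(f_{0,1}) = \sum_{k=0}^r (-1)^k k! \sum_{\mathbf{l}\in P(r,k)} f_{|\mathbf{a}_\mathbf{l}|,k+1}\, \tilde{D}_{\mathbf{a}^\mathbf{l}}(1).
\]

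It remains to absorb $e_{a,0}$ into each summand. Commuting through the sole $f$-factor gives
\[
e_{a,0}\, f_{|\mathbf{a}_\mathbf{l}|,k+1} = f_{|\mathbf{a}_\mathbf{l}|,k+1}\, e_{a,0} + h_{a+|\mathbf{a}_\mathbf{l}|,k+1},
\]
so we need to show that the first summand produces an element of $\tilde{U}^{>0}$ once multiplied by $\tilde{D}_{\mathbf{a}^\mathbf{l}}(1)$. By iterated application of Lemma \ref{lem: tilD preserves tilUpos}, $\tilde{D}_{\mathbf{a}^\mathbf{l}}(1)\in \tilde{U}^0$; a short PBW check, pushing $e_{a,0}$ rightward past each $h_{b,m}$ using $[e_{a,0},h_{b,m}] = -2e_{a+b,m}$, shows $e_{a,0}\tilde{D}_{\mathbf{a}^\mathbf{l}}(1) \in \tilde{U}^{>0}$, and then left multiplication by $f_{|\mathbf{a}_\mathbf{l}|,k+1}$ keeps us in the left ideal $\tilde{U}^{>0}$. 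Assembling the three steps yields the claimed formula.

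The only subtlety is ensuring that $e_{a,0}\tilde{D}_{\mathbf{a}^\mathbf{l}}(1)$ lands in $\tilde{U}^{>0}$—but this follows mechanically from Lemma \ref{lem: tilD preserves tilUpos} together with the elementary PBW commutation described above, so there is no real conceptual obstacle; the main care is purely bookkeeping of the indices $|\mathbf{a}_\mathbf{l}|$ and $k+1$ matching between the derivation expansion and the surviving $h$-term.
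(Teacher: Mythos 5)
Your proof is correct and follows essentially the same route as the paper: Lemma \ref{lem: er fs} with $s=r+1$, the Leibniz-type expansion of $\tilde{D}_\mathbf{a}(f_{0,1})$ giving $D_{\mathbf{a}_\mathbf{l}}(f_{0,1}) = k!\,f_{|\mathbf{a}_\mathbf{l}|,k+1}$, and then commuting $e_{a,0}$ past the single $f$-factor while discarding $f_{|\mathbf{a}_\mathbf{l}|,k+1}\,e_{a,0}\tilde{D}_{\mathbf{a}^\mathbf{l}}(1) \in \tilde{U}^{>0}$. The extra PBW justification that $e_{a,0}\tilde{D}_{\mathbf{a}^\mathbf{l}}(1)\in\tilde{U}^{>0}$ is a correct elaboration of a step the paper states without comment.
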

\begin{proof}
  Using Lemmas \ref{lem: er fs} and \ref{lem: tilDa(uv)}, we compute as follows:
  \begin{align*}
    e_{a_1,0} \cdots e_{a_r,0} f_{0,1}^{(r+1)}
    \equiv_{>0}
    \tilde{D}_{\mathbf{a}}(f_{0,1})
    =
    \sum_{k=0}^r (-1)^k \sum_{\mathbf{l} \in P(r,k)} D_{\mathbf{a}_\mathbf{l}}(f_{0,1}) \tilde{D}_{\mathbf{a}^\mathbf{l}}(1).
  \end{align*}
  It is straightforwardly verified that
  \[
    D_{\mathbf{a}_\mathbf{l}}(f_{0,1}) = k! f_{|\mathbf{a}_\mathbf{l}|,k+1}.
  \]
  Hence, we obtain
  \[
    e_{a_1,0} \cdots e_{a_r,0} f_{0,1}^{(r+1)} \equiv_{>0} \sum_{k=0}^r (-1)^k k! \sum_{\mathbf{l} \in P(r,k)} f_{|\mathbf{a}_\mathbf{l}|,k+1} \tilde{D}_{\mathbf{a}^\mathbf{l}}(1).
  \]
  Noting that $e_{a,0} f_{|\mathbf{a}_\mathbf{l}|,k+1} = h_{a+|\mathbf{a}_\mathbf{l}|,k+1} + f_{|\mathbf{a}_\mathbf{l}|,k+1} e_{a,0}$ and $e_{a,0} \tilde{D}_{\mathbf{a}^\mathbf{l}}(1) \in \tilde{U}^{>0}$, we see that the assertion follows.
\end{proof}

\begin{cor}\label{cor: tilD ab*pi}
  For each $a \in A$, $r \in \mathbb{Z}_{\geq 0}$, and $\mathbf{a} = (a_1,\dots,a_r) \in A^*$, we have
  \[
    \tilde{D}_{a*\mathbf{a}}(1) = \sum_{k=0}^r (-1)^k k! \sum_{\mathbf{l} \in P(r,k)} h_{a+|\mathbf{a}_\mathbf{l}|,k+1} \tilde{D}_{\mathbf{a}^\mathbf{l}}(1).
  \]
\end{cor}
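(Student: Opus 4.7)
The corollary will follow by comparing two different expressions for $e_{a,0} e_{a_1,0} \cdots e_{a_r,0} f_{0,1}^{(r+1)}$ modulo $\tilde{U}^{>0}$, and then upgrading the congruence to an equality.

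The plan is as follows. First, I would apply Lemma \ref{lem: er fs} to the word $a * \mathbf{a} = (a,a_1,\dots,a_r) \in A^{r+1}$ with $s = r+1$, which (since $f_{0,1}^{(0)} = 1$) yields
\[
  e_{a,0} e_{a_1,0} \cdots e_{a_r,0} f_{0,1}^{(r+1)} \equiv_{>0} \tilde{D}_{a*\mathbf{a}}(1).
\]
Second, the right-hand side of Proposition \ref{prop: er fr+1} gives a second expression for the same product modulo $\tilde{U}^{>0}$. Combining the two yields
\[
  \tilde{D}_{a*\mathbf{a}}(1) \equiv_{>0} \sum_{k=0}^r (-1)^k k! \sum_{\mathbf{l} \in P(r,k)} h_{a+|\mathbf{a}_\mathbf{l}|,k+1} \tilde{D}_{\mathbf{a}^\mathbf{l}}(1).
\]

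The remaining step is to promote this $\equiv_{>0}$ into honest equality in $\tilde{U}$. For this I would argue that both sides lie in $\tilde{U}^0$. The left-hand side is obtained by applying the operators $\tilde{D}_{a_i}$ to $1 \in \tilde{U}^0$, and since each $\tilde{D}_{a_i}$ preserves $\tilde{U}^0$ by Lemma \ref{lem: tilD preserves tilUpos}, we get $\tilde{D}_{a*\mathbf{a}}(1) \in \tilde{U}^0$. On the right-hand side, induction on $r$ (the base case $r=0$ being trivial) shows that each $\tilde{D}_{\mathbf{a}^\mathbf{l}}(1) \in \tilde{U}^0$, and since $h_{a+|\mathbf{a}_\mathbf{l}|,k+1} \in \tilde{U}^0$ as well, each summand lies in $\tilde{U}^0$.

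Finally, the difference of the two sides lies in $\tilde{U}^0 \cap \tilde{U}^{>0}$, which is zero: by the PBW theorem applied to the triangular decomposition of $\tilde{\mathcal{L}}$ into $f$-, $h$-, and $e$-parts, any element of $\tilde{U}^{>0}$ has a PBW expansion with strictly positive total degree in the generators $e_{b,n}$, while elements of $\tilde{U}^0$ have zero $e$-degree. Hence the congruence is in fact an equality, proving the corollary. I expect no serious obstacle here; the only subtlety is the last PBW argument, which simply recycles the standard triangular-decomposition reasoning that $\tilde{U}^0 \cap \tilde{U}^{>0} = 0$.
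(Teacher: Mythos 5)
Your proposal is correct and follows essentially the same route as the paper: apply Lemma \ref{lem: er fs} with $s=r+1$ to the word $a*\mathbf{a}$, compare with Proposition \ref{prop: er fr+1}, and then use that both sides lie in $\tilde{U}^0$ (together with $\tilde{U}^0 \cap \tilde{U}^{>0}=0$) to upgrade the congruence to an equality. The only difference is that you spell out the PBW justification for $\tilde{U}^0 \cap \tilde{U}^{>0}=0$, which the paper leaves implicit.
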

\begin{proof}
  By Lemma \ref{lem: er fs} and Proposition \ref{prop: er fr+1}, we have
  \begin{align*}
    \tilde{D}_{a*\mathbf{a}}(1) \equiv_{>0} \sum_{k=0}^r (-1)^k k! \sum_{\mathbf{l} \in P(r,k)} h_{a+|\mathbf{a}_\mathbf{l}|,k+1} \tilde{D}_{\mathbf{a}^\mathbf{l}}(1).
  \end{align*}
  Since the both sides are elements of $\tilde{U}^0$, they coincide with each other.
\end{proof}

Let us summarize properties of $\tilde{D}_\mathbf{a}(1)$'s.

\begin{prop}\label{prop: properties of Dtilbfa(1)}
  For each $a \in A$, $r \in \mathbb{Z}_{\geq 0}$, and $\mathbf{a} = (a_1,\dots,a_r) \in A^r$, the following hold.
  \begin{enumerate}
    \item\label{item: Dtilempty(1)} $\tilde{D}_{()}(1) = 1.$
    \item\label{item: Dtila(1)} $\tilde{D}_a(1) = h_{a,1}$.
    \item\label{item: Dtilbfas(1) equiv ef} $\tilde{D}_\mathbf{a}(1) \equiv_{>0} e_{a_1,0} \cdots e_{a_r,0} f_{(0,1)}^{(r)}$.
    \item\label{item: Dtiltaubfa(1)} $\tilde{D}_{\tau \mathbf{a}}(1) = \tilde{D}_\mathbf{a}(1)$.
    \item\label{item: Dtilabfa(1)} $\tilde{D}_{a*\mathbf{a}}(1) = \sum_{k=0}^r (-1)^k k! \sum_{\mathbf{l} \in P(r,k)} h_{a+|\mathbf{a}_\mathbf{l}|,k+1} \tilde{D}_{\mathbf{a}^\mathbf{l}}(1)$.
  \end{enumerate}
\end{prop}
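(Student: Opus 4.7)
The five claims split naturally by difficulty: items \eqref{item: Dtilempty(1)}, \eqref{item: Dtila(1)}, \eqref{item: Dtilbfas(1) equiv ef}, and \eqref{item: Dtilabfa(1)} are essentially immediate from what has already been proved, and only the symmetry statement \eqref{item: Dtiltaubfa(1)} requires a genuine (though short) argument. My plan is therefore to dispatch the easy parts first by citation and then reduce \eqref{item: Dtiltaubfa(1)} to the obvious commutativity of the $e_{a,0}$'s.

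For \eqref{item: Dtilempty(1)}, by the convention in \eqref{eq: def of Dbfa, Dtilbfa}, $\tilde{D}_{()}$ is the empty product of operators, i.e.\ the identity of $\operatorname{End}_\mathbb{C}(\tilde{U})$, so $\tilde{D}_{()}(1)=1$. For \eqref{item: Dtila(1)}, compute $\tilde{D}_a(1) = R_a(1) - D_a(1) = 1\cdot h_{a,1} - 0 = h_{a,1}$, where $D_a(1)=0$ because $D_a$ is a derivation. For \eqref{item: Dtilbfas(1) equiv ef}, apply Lemma \ref{lem: er fs} with $s=r$; since $f_{0,1}^{(0)}=1$, we obtain $e_{a_1,0}\cdots e_{a_r,0} f_{0,1}^{(r)} \equiv_{>0} \tilde{D}_\mathbf{a}(1)$. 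For \eqref{item: Dtilabfa(1)}, this is literally Corollary \ref{cor: tilD ab*pi}.

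The core task is \eqref{item: Dtiltaubfa(1)}. From \eqref{item: Dtilbfas(1) equiv ef}, both $\tilde{D}_\mathbf{a}(1)$ and $\tilde{D}_{\tau \mathbf{a}}(1)$ are $\equiv_{>0}$-congruent to products of the $e_{a_i,0}$ times $f_{0,1}^{(r)}$ (with reordered factors). By the bracket relation \eqref{eq: mul Ltilsl2}, $[e_{a_i,0},e_{a_j,0}] = [e,e]_{a_i+a_j,0}=0$, so the product $e_{a_1,0}\cdots e_{a_r,0}$ is invariant under reordering. Therefore
\[
  \tilde{D}_\mathbf{a}(1) \;\equiv_{>0}\; e_{a_{\tau^{-1}(1)},0}\cdots e_{a_{\tau^{-1}(r)},0}\, f_{0,1}^{(r)} \;\equiv_{>0}\; \tilde{D}_{\tau \mathbf{a}}(1),
\]
i.e.\ $\tilde{D}_\mathbf{a}(1) - \tilde{D}_{\tau\mathbf{a}}(1) \in \tilde{U}^{>0}$. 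By Lemma \ref{lem: tilD preserves tilUpos} (applied to $1\in\tilde{U}^0$), both sides lie in $\tilde{U}^0$, so the difference lies in $\tilde{U}^0\cap\tilde{U}^{>0}$.

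The only point that still needs justification is that $\tilde{U}^0\cap \tilde{U}^{>0}=0$. I would obtain this from the PBW theorem applied to the triangular decomposition $\tilde{\mathcal{L}} = \tilde{\mathcal{L}}^- \oplus \tilde{\mathcal{L}}^0 \oplus \tilde{\mathcal{L}}^+$, which gives $\tilde{U} = \tilde{U}^{<0}\otimes \tilde{U}^0 \otimes \tilde{U}^{>0,+}$ (the obvious analogue in this setting), where $\tilde{U}^{>0,+}$ is the augmentation ideal of the subalgebra generated by the $e_{a,n}$. Since $\tilde{U}^{>0} = \tilde{U}\cdot \tilde{\mathcal{L}}^+ = \tilde{U}^{<0}\tilde{U}^0\tilde{U}^{>0,+}$, its intersection with the $\tilde{U}^0$-summand is zero. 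This is the one technical step I expect to be the main (mild) obstacle; everything else is cited. The conclusion $\tilde{D}_\mathbf{a}(1) = \tilde{D}_{\tau\mathbf{a}}(1)$ then follows, completing the proof.
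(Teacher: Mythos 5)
Your proposal is correct and follows essentially the same route as the paper: items (1), (2), (3), (5) by definition, Lemma \ref{lem: er fs} with $s=r$, and Corollary \ref{cor: tilD ab*pi}, and item (4) from the commutativity of the $e_{a_i,0}$'s together with the fact that both sides lie in $\tilde{U}^0$. The only difference is that you spell out the separation $\tilde{U}^0 \cap \tilde{U}^{>0} = 0$ via PBW for the triangular decomposition of $\tilde{\mathcal{L}}$, a step the paper uses implicitly (both here and in the proof of Corollary \ref{cor: tilD ab*pi}), and your justification of it is sound.
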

\begin{proof}
  The assertions \eqref{item: Dtilempty(1)} and \eqref{item: Dtila(1)} are immediate from the definition \eqref{eq: def of Dbfa, Dtilbfa} of $\tilde{D}_\mathbf{a}$.
  The assertion \eqref{item: Dtilbfas(1) equiv ef} is a special case ($s = r$) of Lemma \ref{lem: er fs}.
  This implies \eqref{item: Dtiltaubfa(1)} since the elements $e_{a_1,0},\dots,e_{a_r,0}$ commute with each other.
  The assertion \eqref{item: Dtilabfa(1)} is just Corollary \ref{cor: tilD ab*pi}.
\end{proof}

\subsection{Invariant rings}
Let $A$ be a finitely generated additive abelian monoid which is equipped with a total order $\leq$ such that $a \leq b$ implies $a+c \leq b+c$ for all $a,b,c \in A$.
Then, the monoid algebra $\mathbb{C}[A]$ is an affine algebra, i.e., a finitely generated integral domain.
Hence, the set $X := \operatorname{Specm} \mathbb{C}[A]$ of maximal ideals of $\mathbb{C}[A]$ is an affine variety.

\begin{ex}
  Suppose that $A = \mathbb{Z}$ with usual addition and order.
  Then, the monoid algebra $\mathbb{C}[A]$ and the affine variety $X$ are identical to the Laurent polynomial ring $\mathbb{C}[x,x^{-1}]$ and the algebraic torus $\mathbb{C}^\times$, respectively.
\end{ex}

\begin{ex}
  Let $m$ be a positive integer.
  Suppose that $A = \mathbb{Z}_{\geq 0}^m$ with the componentwise addition and the lexicographic order.
  Then, the monoid algebra $\mathbb{C}[A]$ and the affine variety $X$ are identical to the polynomial ring $\mathbb{C}[x_1,\dots,x_m]$ and the affine space $\mathbb{C}^m$, respectively.
\end{ex}

Let $n$ be a positive integer, and consider the direct product $X^n$.
It is identified with $\operatorname{Specm} \mathbb{C}[A^n]$.

The $n$-th symmetric group $\mathfrak{S}_n$ naturally acts on $X^n$.
Then, the orbit space $X^n/\mathfrak{S}_n$ can be identified with $\operatorname{Specm} \mathbb{C}[A^n]^{\mathfrak{S}_n}$ in a way such that the quotient map $X^n \to X^n/\mathfrak{S}_n$ corresponds to the inclusion map $\mathbb{C}[A^n]^{\mathfrak{S}_n} \to \mathbb{C}[A^n]$.

For each $i \in \{ 1,\dots,n \}$ and $a \in A$, let $x_i^a \in \mathbb{C}[A^n]$ denote the element corresponding to $(0,\dots,0,\overset{i}{a},0,\dots,0) \in A^n$.
For each $\mathbf{a} = (a_1,\dots,a_n) \in A^n$, set $\mathbf{x}^\mathbf{a} := x_1^{a_1} \cdots x_n^{a_n}$.
For each $r \in \mathbb{Z}_{\geq 0}$ and $\mathbf{a} = (a_1,\dots,a_r) \in A^r$, define $m_\mathbf{a} \in \mathbb{C}[A^n]^{\mathfrak{S}_n}$ by
\begin{align}\label{eq: def of m_bfa}
  m_\mathbf{a} := \sum_{\tau \in \mathfrak{S}_r} \sum_{1 \leq i_1 < \dots < i_r \leq n} x_{i_1}^{a_{\tau^{-1}(1)}} \cdots x_{i_r}^{a_{\tau^{-1}(r)}}.
\end{align}
In particular,
\[
  m_a = \sum_{i=1}^n x_i^a \ \text{ for all } a \in A.
\]

\begin{rem}\label{rem: da are monomial symm poly}
  When $r \leq n$, we have
  \[
    m_\mathbf{a} = \frac{1}{(n-r)!} \sum_{\tau \in \mathfrak{S}_n} \mathbf{x}^{\tau \cdot (a_1,\dots,a_r,0,\dots,0)} = \frac{r!}{|\mathfrak{S}_r \mathbf{a}|} \sum_{\mathbf{b} \in \mathfrak{S}_n (a_1,\dots,a_r,0,\dots,0)} \mathbf{x}^\mathbf{b}.
  \]
  Hence, the functions $m_\mathbf{a}$ are analogues of the monomial symmetric polynomials.
\end{rem}

\begin{prop}\label{prop: basic properties of m_bfa}
  For each $r \in \mathbb{Z}_{\geq 0}$ and $\mathbf{a} = (a_1,\dots,a_r) \in A^r$, the following hold.
  \begin{enumerate}
    \item \label{item: m_()=1}$m_{()} = 1$.
    \item $m_0 = n$.
    \item $m_\mathbf{a} = 0$ if $r > n$.
    \item \label{item: m_taubfa = m_bfa}$m_{\tau \mathbf{a}} = m_\mathbf{a}$ for all $\tau \in \mathfrak{S}_r$.
    \item \label{item: m_a m_bfa}$m_a m_\mathbf{a} = m_{a*\mathbf{a}} + \sum_{l=1}^r m_{(a_1,\dots,a_l+a,\dots,a_r)}$ for all $a \in A$.
    \item \label{item: m_a1 dots m_ar}$m_{a_1} \cdots m_{a_r} = \sum_{k=1}^r \frac{1}{k!} \sum_{\substack{r_1,\dots,r_k \in \mathbb{Z}_{> 0}\\r_1+\dots+r_k = r}} \sum_{\substack{(\mathbf{l}_1,\dots,\mathbf{l}_k) \in P(r,r_1) \times \cdots \times P(r,r_k)\\ \mathbf{l}_1 \sqcup \cdots \sqcup \mathbf{l}_k = \{ 1,\dots,r \}}} m_{(|\mathbf{a}_{\mathbf{l}_1}|,\dots,|\mathbf{a}_{\mathbf{l}_k}|)}$.
  \end{enumerate}
\end{prop}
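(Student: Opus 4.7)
My plan: Parts (1)--(4) follow directly from the definition \eqref{eq: def of m_bfa}: (1) is the convention that summing over the empty index set gives the empty product $1$; (2) is immediate; (3) is because there are no strictly increasing length-$r$ sequences in $\{1,\dots,n\}$ when $r>n$; and (4) is the reindexing $\tau \mapsto \tau \sigma^{-1}$ in the inner sum over $\mathfrak{S}_r$.

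For (5), the key preparatory step is to rewrite $m_\mathbf{a}$ as a sum over injections. The assignment $(J,\tau) \mapsto \iota$, where $J = \{i_1 < \cdots < i_r\}$ and $\iota(s) := i_{\tau(s)}$, is a bijection between the pairs indexing \eqref{eq: def of m_bfa} and the injections $\iota : \{1,\dots,r\} \hookrightarrow \{1,\dots,n\}$; moreover, under this bijection the defining summand becomes $x_{\iota(1)}^{a_1} \cdots x_{\iota(r)}^{a_r}$, so $m_\mathbf{a} = \sum_\iota x_{\iota(1)}^{a_1} \cdots x_{\iota(r)}^{a_r}$. Granted this, I would expand $m_a m_\mathbf{a} = \sum_{k,\iota} x_k^a \prod_s x_{\iota(s)}^{a_s}$ and split the sum according to whether $k \in \iota(\{1,\dots,r\})$. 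If $k$ is not in the image, the pair $(k,\iota)$ extends to an injection $\iota' : \{1,\dots,r+1\} \hookrightarrow \{1,\dots,n\}$ by placing $k$ first, yielding the total contribution $m_{a*\mathbf{a}}$. If $k = \iota(l)$ for a unique $l$, the summand becomes $x_{\iota(l)}^{a+a_l} \prod_{s \neq l} x_{\iota(s)}^{a_s}$; summing over such $\iota$ with $l$ fixed gives $m_{(a_1,\dots,a_l+a,\dots,a_r)}$, and then summing over $l$ produces the remaining piece of (5).

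For (6), I plan a direct combinatorial expansion. Write $m_{a_1} \cdots m_{a_r} = \sum_{(k_1,\dots,k_r) \in \{1,\dots,n\}^r} \prod_s x_{k_s}^{a_s}$ and group tuples by the set partition $\mathcal{P}$ of $\{1,\dots,r\}$ induced by the equivalence $s \sim s'$ if and only if $k_s = k_{s'}$. For a fixed unordered partition $\mathcal{P} = \{B_1,\dots,B_k\}$, the restricted sum equals, by the injection reformulation of $m_\mathbf{a}$ above, the quantity $m_{(|\mathbf{a}_{B_1}|,\dots,|\mathbf{a}_{B_k}|)}$ (well-defined by (4)). Since each unordered partition into $k$ nonempty blocks corresponds to exactly $k!$ ordered partitions $(\mathbf{l}_1,\dots,\mathbf{l}_k) \in P(r,r_1) \times \cdots \times P(r,r_k)$ with $\mathbf{l}_1 \sqcup \cdots \sqcup \mathbf{l}_k = \{1,\dots,r\}$, summing over ordered partitions and dividing by $k!$ yields the right-hand side of (6). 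The main obstacle is precisely this ordered-versus-unordered partition bookkeeping; an alternative induction on $r$ based on (5), using the bijection \eqref{eq: (l,j) to l*j} to organize the insertion of the index $r$ into an existing partition of $\{1,\dots,r-1\}$, also works but is considerably more tedious.
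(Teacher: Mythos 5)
Your proposal is correct. For items (1)--(5) it is essentially the paper's argument: the paper also expands $m_a m_{\mathbf{a}}$ from the definition \eqref{eq: def of m_bfa} and splits the extra index according to whether it lies among the already-chosen indices, handling the ``new index'' case via the explicit bijections \eqref{eq: (l,j) to (l*lj,i)} and \eqref{eq: Sr r+1 to Sr+1}; your reformulation of $m_{\mathbf{a}}$ as a sum over injections $\iota\colon\{1,\dots,r\}\hookrightarrow\{1,\dots,n\}$ is just a cleaner packaging of the same bookkeeping, and it makes the identification of the two pieces with $m_{a*\mathbf{a}}$ and $\sum_l m_{(a_1,\dots,a_l+a,\dots,a_r)}$ transparent. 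The only genuine divergence is in (6): the paper simply deduces it from (5) (implicitly by induction on $r$), whereas you give a direct expansion of $m_{a_1}\cdots m_{a_r}$ over tuples in $\{1,\dots,n\}^r$, grouped by the set partition recording which indices coincide, with each unordered partition into $k$ blocks counted $k!$ times by the ordered tuples $(\mathbf{l}_1,\dots,\mathbf{l}_k)$, which explains the factor $\tfrac{1}{k!}$ and uses (4) to see the summand is well defined. Your direct argument is self-contained and arguably more illuminating about where the $\tfrac{1}{k!}$ comes from, at the cost of the ordered-versus-unordered bookkeeping; the paper's inductive route is shorter on the page but leaves the combinatorics to the reader.
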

\begin{proof}
  The assertions \eqref{item: m_()=1}--\eqref{item: m_taubfa = m_bfa} are clear from the definition \eqref{eq: def of m_bfa}.

  Let us prove the assertion \eqref{item: m_a m_bfa}.
  By the definition of $m_\mathbf{a}$, we have
  \begin{align*}
    m_a m_\mathbf{a}
    &=
    \sum_{\tau \in \mathfrak{S}_r} \sum_{1 \leq i_1 < \cdots < i_r \leq n} \sum_{i=1}^n x_i^a x_{i_1}^{a_{\tau^{-1}(1)}} \cdots x_{i_r}^{a_{\tau^{-1}(r)}}\\
    &=
    \sum_{\tau \in \mathfrak{S}_r} \sum_{\mathbf{l} \in P(n,r)} \sum_{j=1}^{r} x_{l_{\tau(j)}}^a x_{l_1}^{a_{\tau^{-1}(1)}} \cdots x_{l_r}^{a_{\tau^{-1}(r)}}\\
    &\phantom{=}+
    \sum_{\tau \in \mathfrak{S}_r} \sum_{\mathbf{l} \in P(n,r)} \sum_{j=1}^{n-r} x_{l^j}^a x_{i_1}^{a_{\tau^{-1}(1)}} \cdots x_{i_r}^{a_{\tau^{-1}(r)}}.
  \end{align*}
  The first term of the right-hand side equals $\sum_{l=1}^r m_{(a_1,\dots,a_l+a,\dots,a_r)}$.
  For the second term, consider the bijection
  \[
    \mathfrak{S}_r \times \{1,\dots,n-r\} \to \mathfrak{S}_{r+1} \times P(n,r+1);\ (\tau,\mathbf{l},j) \mapsto (\sigma_{i(\mathbf{l},j),r+1}\tau, \mathbf{l} \circledast l^j)
  \]
  obtained by composing the bijections \eqref{eq: (l,j) to (l*lj,i)} and \eqref{eq: Sr r+1 to Sr+1}.
  Then, we see that the second term coincides with $m_{a*\mathbf{a}}$.

  The assertion \eqref{item: m_a1 dots m_ar} can be straightforwardly deduced from the assertion \eqref{item: m_a m_bfa}.
\end{proof}

\begin{defi}
  Let $r \in \mathbb{Z}_{\geq 0}$ and $\mathbf{a} = (a_1,\dots,a_r) \in A^r$.
  We say that $\mathbf{a}$ is \emph{$n$-dominant} if the following are satisfied:
  \begin{itemize}
    \item $r \leq n$,
    \item $a_1,\dots,a_r \neq 0$,
    \item $a_1 \geq \cdots \geq a_r$.
  \end{itemize}
  Let $A_n^+$ denote the set of $n$-dominant elements in $A^*$.
\end{defi}

\begin{prop}\label{prop: basis of inv ring}
  The set $\{ m_\mathbf{a} \mid \mathbf{a} \in A_n^+ \}$ forms a basis of $\mathbb{C}[A^n]^{\mathfrak{S}_n}$.
\end{prop}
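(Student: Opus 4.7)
The plan is to reduce the statement to the standard fact that, in the monoid algebra $\mathbb{C}[A^n]$, the $\mathfrak{S}_n$-invariants have a natural basis consisting of orbit sums over the monomial basis $\{\mathbf{x}^\mathbf{c} \mid \mathbf{c} \in A^n\}$, and then identify this basis with $\{m_\mathbf{a}\}$ using Remark \ref{rem: da are monomial symm poly}.

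First I would record that since $\mathfrak{S}_n$ permutes the monomial basis $\{\mathbf{x}^\mathbf{c}\}$ of $\mathbb{C}[A^n]$, for any set of representatives $R \subset A^n$ of the orbit space $A^n/\mathfrak{S}_n$, the orbit sums
\[
  \sigma_\mathbf{c} := \sum_{\mathbf{b} \in \mathfrak{S}_n \cdot \mathbf{c}} \mathbf{x}^\mathbf{b} \quad (\mathbf{c} \in R)
\]
form a basis of $\mathbb{C}[A^n]^{\mathfrak{S}_n}$: spanning is clear by averaging, and linear independence holds because distinct orbits have disjoint supports inside the monomial basis.

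Next I would construct a canonical system of representatives indexed by $A_n^+$. Given $\mathbf{c} = (c_1,\dots,c_n) \in A^n$, let $a_1,\dots,a_r$ be the nonzero entries of $\mathbf{c}$ listed with multiplicity, and use the total order $\leq$ on $A$ to arrange them so that $a_1 \geq \cdots \geq a_r$. The tuple $\mathbf{a} = (a_1,\dots,a_r)$ lies in $A_n^+$ (with $r \leq n$ because $\mathbf{c}$ has only $n$ entries), and the padded tuple $(a_1,\dots,a_r,0,\dots,0) \in A^n$ is $\mathfrak{S}_n$-equivalent to $\mathbf{c}$. Conversely the multiset $\{a_1,\dots,a_r\}$, and hence the sorted tuple $\mathbf{a}$, is recovered from the orbit of $\mathbf{c}$. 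This yields a bijection $A_n^+ \to A^n/\mathfrak{S}_n$.

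Finally I would invoke the right-hand equality in Remark \ref{rem: da are monomial symm poly}: for $\mathbf{a} \in A_n^+$ one has $m_\mathbf{a} = \frac{r!}{|\mathfrak{S}_r \mathbf{a}|}\, \sigma_{(a_1,\dots,a_r,0,\dots,0)}$, and the prefactor is a positive integer, hence nonzero. Therefore $\{m_\mathbf{a} \mid \mathbf{a} \in A_n^+\}$ is obtained from the orbit-sum basis by rescaling each element by a nonzero scalar and is itself a basis of $\mathbb{C}[A^n]^{\mathfrak{S}_n}$.

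There is no serious obstacle: the only point requiring care is the well-definedness of the canonical representative in the middle step, which relies on the total order $\leq$ on $A$ (to sort the nonzero entries unambiguously) and on the distinction between $0$ and nonzero elements (so that the trailing zeros in the padded tuple are canonical). The rest is bookkeeping.
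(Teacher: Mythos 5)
Your proof is correct and is essentially the paper's argument written out in full: the paper simply points to Remark \ref{rem: da are monomial symm poly} and the standard orbit-sum argument of Macdonald for monomial symmetric polynomials, which is exactly the reduction you carry out (orbit sums of monomials form a basis of the invariants, orbits of $A^n$ under $\mathfrak{S}_n$ are indexed by $A_n^+$ via sorting the nonzero entries and padding with zeros, and each $m_\mathbf{a}$ is the corresponding orbit sum up to the nonzero integer factor $r!/|\mathfrak{S}_r\mathbf{a}|$).
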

\begin{proof}
  Taking Remark \ref{rem: da are monomial symm poly} into account, one can prove the assertion in a similar way to \cite[Section 2]{Mac79}, where it is proved that the monomial symmetric polynomials form a basis of the ring of symmetric polynomials.
\end{proof}

\begin{thm}\label{thm: presentation of inv ring}
  The algebra $\mathbb{C}[A^n]^{\mathfrak{S}_n}$ has the following presentation$:$
  \begin{description}
    \item[Generators] $\{ m_\mathbf{a} \mid \mathbf{a} \in A^* \}$.
    \item[Relations] For each $r \in \mathbb{Z}_{\geq 0}$ and $\mathbf{a} = (a_1,\dots,a_r) \in A^r$,
    \begin{align}\label{eq: def rel of inv ring}
      \begin{split}
        &m_{()} = 1,\\
        &m_0 = n,\\
        &m_\mathbf{a} = 0 \ \text{ if } r > n,\\
        &m_a m_\mathbf{a} = m_{a*\mathbf{a}} + \sum_{l=1}^r m_{(a_1,\dots,a_l+a,\dots,a_r)} \ \text{ for all } a \in A.
      \end{split}
    \end{align}
  \end{description}
\end{thm}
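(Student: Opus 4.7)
Let $\mathcal{A}$ denote the commutative $\mathbb{C}$-algebra with the presentation given in the theorem. By Proposition \ref{prop: basic properties of m_bfa}, the assignment sending each abstract generator $m_{\mathbf{a}}$ to the element $m_{\mathbf{a}} \in \mathbb{C}[A^n]^{\mathfrak{S}_n}$ defined in equation \eqref{eq: def of m_bfa} extends to a surjective $\mathbb{C}$-algebra homomorphism $\pi : \mathcal{A} \to \mathbb{C}[A^n]^{\mathfrak{S}_n}$. My plan is to show $\pi$ is injective. Since Proposition \ref{prop: basis of inv ring} says $\{m_{\mathbf{a}} : \mathbf{a} \in A_n^+\}$ is a $\mathbb{C}$-basis of the codomain, it suffices to show that this family already spans $\mathcal{A}$; then $\pi$ will send a spanning set onto a basis and must be a bijection.

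The crucial step is to derive from the abstract relations the symmetry property $m_{\tau \mathbf{a}} = m_{\mathbf{a}}$ for every $\tau \in \mathfrak{S}_r$ and $\mathbf{a} \in A^r$. I would proceed by induction on $r$, rearranging the product relation to
\[
  m_{a * \mathbf{b}} = m_a m_{\mathbf{b}} - \sum_{l=1}^{s} m_{(b_1, \dots, b_l + a, \dots, b_s)}, \quad \mathbf{b} = (b_1, \dots, b_s),
\]
and iterating to express $m_{\mathbf{a}}$ as a polynomial in $\{m_c : c \in A\}$. The target closed form is
\[
  m_{\mathbf{a}} = \sum_{\pi} (-1)^{r - \sharp\pi} \prod_{B \in \pi} (\sharp B - 1)! \; m_{\sum_{i \in B} a_i},
\]
where $\pi$ runs over set partitions of $\{1, \dots, r\}$, $\sharp\pi$ denotes the number of blocks, and $\sharp B$ the cardinality of a block $B$. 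Since the symmetric group $\mathfrak{S}_r$ acts on set partitions without altering this sum, the right-hand side is manifestly symmetric in the entries of $\mathbf{a}$, yielding the symmetry relation.

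Once symmetry is established, reduction to $n$-dominant sequences is routine: (i) sort the entries of $\mathbf{a}$ decreasingly using $\mathfrak{S}_r$-invariance; (ii) strip zero entries via the identity $m_{0 * \mathbf{b}} = (n - s) m_{\mathbf{b}}$ for $\mathbf{b} \in A^s$, which follows by applying the product relation with $a = 0$ and using $m_0 = n$; (iii) invoke the vanishing relation $m_{\mathbf{a}} = 0$ whenever the length of $\mathbf{a}$ exceeds $n$. In this way every $m_{\mathbf{a}}$ becomes a scalar multiple of some $m_{\mathbf{b}}$ with $\mathbf{b} \in A_n^+$, and the spanning property is secured.

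The principal obstacle is the inductive verification of the Möbius-type closed form for $m_{\mathbf{a}}$ from the abstract relations alone. One peels off $a_1$ via the product relation, inserts the inductive formula for each $m_{\mathbf{c}}$ of length $r-1$ appearing on the right, and regroups the result over set partitions of $\{2, \dots, r\}$ according to whether the index $1$ is absorbed into an existing block or forms a new singleton block. Checking that the signs and the $(\sharp B - 1)!$ factors recombine to reproduce the sum over set partitions of $\{1, \dots, r\}$ amounts to a combinatorial identity on the partition lattice; this is the technical heart of the argument, and is essentially the classical Möbius inversion between monomial and power-sum expansions transported into the abstract setting of $\mathcal{A}$.
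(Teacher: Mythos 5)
Your proposal is correct and follows essentially the same route as the paper: build the surjection from the abstractly presented algebra onto $\mathbb{C}[A^n]^{\mathfrak{S}_n}$, show the abstract algebra is spanned by $\{ m_\mathbf{a} \mid \mathbf{a} \in A_n^+ \}$ via $\mathfrak{S}_r$-invariance, the zero-stripping identity $m_{0*\mathbf{a}} = (n-r)m_\mathbf{a}$, and the vanishing relation, and then conclude injectivity by comparison with the basis of Proposition \ref{prop: basis of inv ring}. The only (minor) divergence is how the symmetry is obtained: you derive the explicit set-partition (M\"obius) closed form expressing $m_\mathbf{a}$ through the one-letter generators, which is manifestly symmetric, whereas the paper deduces the invariance by induction on $r$ from the abstract analogue of the identity for $m_{a_1}\cdots m_{a_r}$ in Proposition \ref{prop: basic properties of m_bfa} --- these are the two directions of the same inversion.
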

\begin{proof}
  Let $R$ denote the $\mathbb{C}$-algebra generated by $\{ m'_\mathbf{a} \mid \mathbf{a} \in A^* \}$ subject to the relations \eqref{eq: def rel of inv ring} with $m$ replaced by $m'$.
  Let us show that $R$ is spanned by
  \begin{align}\label{eq: spanning set for R}
    \{ m'_{\mathbf{a}} \mid \mathbf{a} \in A_n^+ \}.
  \end{align}
  To this end, it suffices to prove the following for all $r \in \mathbb{Z}_{\geq 0}$ and $\mathbf{a} \in A^r$:
  \begin{itemize}
    \item $m'_{0*\mathbf{a}} = (n-r) m'_\mathbf{a}$,
    \item $m'_{\tau \mathbf{a}} = m'_\mathbf{a}$ for all $\tau \in \mathfrak{S}_r$.
  \end{itemize}
  The first claim follows from the relations \eqref{eq: def rel of inv ring}:
  \[
    n m'_\mathbf{a} = m'_0 m'_\mathbf{a} = m'_{0*\mathbf{a}} + rm'_\mathbf{a}.
  \]
  For the second claim, observe that the $m'_\mathbf{a}$'s satisfy the same identity as Proposition \ref{prop: basic properties of m_bfa} \eqref{item: m_a1 dots m_ar}. 
  Then, the claim can be deduced by induction on $r$.

  By Proposition \ref{prop: basic properties of m_bfa}, there exists an algebra homomorphism $f : R \to \mathbb{C}[A^n]^{\mathfrak{S}_n}$ which sends $m'_\mathbf{a}$ to $m_\mathbf{a}$ for all $\mathbf{a} \in A^*$.
  By Proposition \ref{prop: basis of inv ring} and the fact that the set \eqref{eq: spanning set for R} spans $R$, we see that this homomorphism is an isomorphism.
\end{proof}

\begin{thm}\label{thm: sufficient condition for evaluation}
  Let $(c_\mathbf{a})_{\mathbf{a} \in A^*}$ be a family of complex numbers satisfying the following for all $r \in \mathbb{Z}_{\geq 0}$ and $\mathbf{a} = (a_1,\dots,a_r) \in A^r$$:$
  \begin{align*}
    &c_{()} = 1,\\
    &c_0 = n,\\
    &c_\mathbf{a} = 0 \ \text{ if } r > n,\\
    &c_a c_\mathbf{a} = c_{a*\mathbf{a}} + \sum_{l=1}^r c_{(a_1,\dots,a_l+a,\dots,a_r)} \ \text{ for all } a \in A.
  \end{align*}
Then, there exists $\mathbf{p} \in X^n$ such that $m_\mathbf{a}(\mathbf{p}) = c_\mathbf{a}$ for all $\mathbf{a} \in A^*$, where $m_\mathbf{a}(\mathbf{p})$ denotes the value of the function $m_\mathbf{a}$ at $\mathbf{p}$ $($recall that $m_\mathbf{a} \in \mathbb{C}[A^n]^{\mathfrak{S}_n} \subset \mathbb{C}[A^n]$ is a function on $X^n$$)$.
\end{thm}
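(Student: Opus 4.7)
My plan is to interpret the family $(c_\mathbf{a})_{\mathbf{a} \in A^*}$ as an algebra homomorphism via the presentation given in Theorem~\ref{thm: presentation of inv ring}, and then use the surjectivity of the quotient map $X^n \to X^n/\mathfrak{S}_n$ to produce the required point.

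First, I observe that the hypothesis on $(c_\mathbf{a})_{\mathbf{a} \in A^*}$ is exactly the list of defining relations in \eqref{eq: def rel of inv ring}. By Theorem~\ref{thm: presentation of inv ring}, there is therefore a (unique) $\mathbb{C}$-algebra homomorphism
\[
  \chi \colon \mathbb{C}[A^n]^{\mathfrak{S}_n} \to \mathbb{C}, \qquad m_\mathbf{a} \mapsto c_\mathbf{a} \ \text{ for all } \mathbf{a} \in A^*.
\]
Since $\chi(1) = c_{()} = 1$, its kernel $\mathfrak{m}' := \ker \chi$ is a maximal ideal of $\mathbb{C}[A^n]^{\mathfrak{S}_n}$, hence corresponds to a point $[\mathbf{p}] \in X^n/\mathfrak{S}_n = \operatorname{Specm} \mathbb{C}[A^n]^{\mathfrak{S}_n}$.

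Next, I will lift $[\mathbf{p}]$ to a point of $X^n$. Because $A$ is finitely generated, $\mathbb{C}[A^n]$ is a finitely generated $\mathbb{C}$-algebra, and because $\mathfrak{S}_n$ is finite, the inclusion $\mathbb{C}[A^n]^{\mathfrak{S}_n} \hookrightarrow \mathbb{C}[A^n]$ is an integral extension of finitely generated $\mathbb{C}$-algebras (classical invariant theory; every element of $\mathbb{C}[A^n]$ satisfies the monic polynomial $\prod_{\tau \in \mathfrak{S}_n}(T - \tau \cdot f)$ over $\mathbb{C}[A^n]^{\mathfrak{S}_n}$). By lying-over, there is a maximal ideal $\mathfrak{m}$ of $\mathbb{C}[A^n]$ with $\mathfrak{m} \cap \mathbb{C}[A^n]^{\mathfrak{S}_n} = \mathfrak{m}'$; equivalently, there exists $\mathbf{p} \in X^n$ whose image in $X^n/\mathfrak{S}_n$ is $[\mathbf{p}]$.

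Finally, by construction, evaluation at $\mathbf{p}$ of any $m_\mathbf{a} \in \mathbb{C}[A^n]^{\mathfrak{S}_n}$ agrees with $\chi(m_\mathbf{a}) = c_\mathbf{a}$, so $m_\mathbf{a}(\mathbf{p}) = c_\mathbf{a}$ for all $\mathbf{a} \in A^*$, as desired. There is essentially no combinatorial obstacle here; the only point one must take care of is the integrality/lifting argument, and this is standard since $\mathfrak{S}_n$ acts by algebra automorphisms on the finitely generated $\mathbb{C}$-algebra $\mathbb{C}[A^n]$.
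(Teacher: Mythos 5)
Your proof is correct and follows essentially the same route as the paper: both use Theorem \ref{thm: presentation of inv ring} to produce a character $\mathbb{C}[A^n]^{\mathfrak{S}_n} \to \mathbb{C}$ sending $m_\mathbf{a} \mapsto c_\mathbf{a}$, view it as a point of $X^n/\mathfrak{S}_n = \operatorname{Specm}\mathbb{C}[A^n]^{\mathfrak{S}_n}$, and lift it to a point $\mathbf{p} \in X^n$. The only cosmetic difference is that you justify the lifting explicitly via integrality and lying-over, whereas the paper relies on its earlier identification of $X^n/\mathfrak{S}_n$ with the orbit space under the quotient map $X^n \to X^n/\mathfrak{S}_n$.
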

\begin{proof}
  By Theorem \ref{thm: presentation of inv ring}, there exists an algebra homomorphism $f : \mathbb{C}[A^n]^{\mathfrak{S}_n} \to \mathbb{C}$ such that
  \[
    f(m_\mathbf{a}) = c_\mathbf{a} \ \text{ for all } \mathbf{a} \in A^*.
  \]
  Let $f^* : \{(0)\} \to X^n/\mathfrak{S}_n$ denote the morphism of affine varieties corresponding to $f$, where $(0) \subset \mathbb{C}$ denotes the zero ideal.
  Hence, there exists $\mathbf{p} \in X^n$ whose $\mathfrak{S}_n$-orbit is $f^*((0))$.
  Let $\tilde{f} : \mathbb{C}[A^n] \to \mathbb{C}$ denote the algebra homomorphism corresponding to the morphism $\{(0)\} \to X^n;\ (0) \mapsto \mathbf{p}$ of affine varieties.
  Then, we have $\tilde{f}|_{\mathbb{C}[A^n]^{\mathfrak{S}_n}} = f$.
  In particular, $\tilde{f}(m_\mathbf{a}) = f(m_\mathbf{a}) = c_\mathbf{a}$ for all $\mathbf{a} \in A^*$.
  Thus, we complete the proof.
\end{proof}

\begin{thm}\label{thm: sufficient condition for evaluation2}
  Suppose that $A \setminus \{0\}$ is closed under the addition.
  Let $(c_\mathbf{a})_{\mathbf{a} \in (A \setminus \{0\})^*}$ be a family of complex numbers satisfying the following for all $r \in \mathbb{Z}_{\geq 0}$ and $\mathbf{a} = (a_1,\dots,a_r) \in (A \setminus \{0\})^r$$:$
  \begin{align*}
    &c_{()} = 1,\\
    &c_\mathbf{a} = 0 \ \text{ if } r > n,\\
    &c_a c_\mathbf{a} = c_{a*\mathbf{a}} + \sum_{l=1}^r c_{(a_1,\dots,a_l+a,\dots,a_r)} \ \text{ for all } a \in A \setminus \{0\}.
  \end{align*}
  Then, there exists $\mathbf{p} \in X^n$ such that $m_\mathbf{a}(\mathbf{p}) = c_\mathbf{a}$ for all $\mathbf{a} \in (A \setminus \{0\})^*$.
\end{thm}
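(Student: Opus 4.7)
The plan is to reduce to Theorem~\ref{thm: sufficient condition for evaluation} by extending the given family $(c_\mathbf{a})_{\mathbf{a} \in (A \setminus \{0\})^*}$ to a family $(\tilde{c}_\mathbf{a})_{\mathbf{a} \in A^*}$ indexed by all words in $A^*$, so that all the hypotheses of Theorem~\ref{thm: sufficient condition for evaluation} hold. The shape of the extension is forced by those hypotheses: setting $\tilde{c}_0 := n$ and applying the multiplication rule with $a = 0$ to a word $\mathbf{b} \in A^r$ gives $\tilde{c}_{0 * \mathbf{b}} = (n - r)\tilde{c}_\mathbf{b}$, and iterating yields the formula
\[
  \tilde{c}_\mathbf{a} := \left(\prod_{i=1}^{k}(n - s - i + 1)\right) c_{\mathbf{a}'},
\]
where $\mathbf{a}$ has length $r$ with exactly $k$ zero letters, $\mathbf{a}'$ is the length-$s$ subword of nonzero letters with $s := r - k$, and the empty product is $1$.

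First I would derive from the given relations the symmetry $c_{\tau \mathbf{a}} = c_\mathbf{a}$ for all $\mathbf{a} \in (A \setminus \{0\})^r$ and $\tau \in \mathfrak{S}_r$. Because $A \setminus \{0\}$ is closed under addition, every partial sum of nonzero letters remains nonzero, so iterating the given multiplication rule expresses the product $c_{a_1} \cdots c_{a_r}$ as an integer linear combination of $c_\mathbf{b}$'s with $\mathbf{b} \in (A \setminus \{0\})^*$, giving the exact analogue of Proposition~\ref{prop: basic properties of m_bfa}\eqref{item: m_a1 dots m_ar}. Since this product in $\mathbb{C}$ is manifestly symmetric in the $a_i$'s, an induction on $r$ identical to the one in the proof of Theorem~\ref{thm: presentation of inv ring} yields the symmetry. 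Consequently $c_{\mathbf{a}'}$ depends only on the multiset of nonzero letters of $\mathbf{a}$, so $\tilde{c}_\mathbf{a}$ is well-defined and automatically satisfies $\tilde{c}_{\tau \mathbf{a}} = \tilde{c}_\mathbf{a}$.

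Next I would verify each of the four hypotheses of Theorem~\ref{thm: sufficient condition for evaluation} for $(\tilde{c}_\mathbf{a})$. The normalizations $\tilde{c}_{()} = 1$ and $\tilde{c}_0 = n$ are immediate. The vanishing $\tilde{c}_\mathbf{a} = 0$ for $r > n$ splits into two sub-cases: if $s > n$, then $c_{\mathbf{a}'} = 0$ by hypothesis; if $s \leq n$, then the factor $n - s - i + 1$ in the prefactor vanishes at $i = n - s + 1$, an index in the range $1 \leq i \leq k$ thanks to $s \leq n$ and $s + k = r > n$. Three sub-cases of the multiplication relation are easy: the case $a \in A \setminus \{0\}$ and $\mathbf{a} \in (A \setminus \{0\})^r$ is precisely the given hypothesis; and the two sub-cases with $a = 0$ reduce to the defining formula via $\sum_{l=1}^{r}\tilde{c}_{(a_1,\ldots,a_l+0,\ldots,a_r)} = r\tilde{c}_\mathbf{a}$ and $\tilde{c}_{0 * \mathbf{a}} = (n-r)\tilde{c}_\mathbf{a}$.

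The main obstacle will be the remaining sub-case $a \in A \setminus \{0\}$ with $\mathbf{a}$ containing $k \geq 1$ zero letters. After using symmetry to write $\mathbf{a} = \mathbf{a}' * (0,\ldots,0)$, the sum $\sum_{l=1}^{r}\tilde{c}_{(a_1,\ldots,a_l+a,\ldots,a_r)}$ splits into $s$ terms in which a nonzero letter is modified (each carrying the prefactor $\prod_{i=1}^{k}(n-s-i+1)$) and $k$ terms in which a zero letter is replaced by $a$ (each equal to $\left(\prod_{i=1}^{k-1}(n-s-i)\right) c_{a * \mathbf{a}'}$, since the new word has nonzero part of length $s+1$ and $k-1$ zeros). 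Combining these with $\tilde{c}_{a * \mathbf{a}} = \left(\prod_{i=1}^{k}(n-s-i)\right) c_{a * \mathbf{a}'}$ and collapsing the $c_{a * \mathbf{a}'}$-coefficient via the identity $(n-r) + k = n - s$ yields a right-hand side of
\[
  \left(\prod_{i=1}^{k}(n - s - i + 1)\right)\!\left(c_{a * \mathbf{a}'} + \sum_{l=1}^{s}c_{(a'_1,\ldots,a'_l + a,\ldots,a'_s)}\right),
\]
which equals $\tilde{c}_a \tilde{c}_\mathbf{a}$ by the given multiplication rule applied to $\mathbf{a}' \in (A \setminus \{0\})^s$. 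With all hypotheses verified, applying Theorem~\ref{thm: sufficient condition for evaluation} to the extended family produces $\mathbf{p} \in X^n$ with $m_\mathbf{a}(\mathbf{p}) = \tilde{c}_\mathbf{a} = c_\mathbf{a}$ for every $\mathbf{a} \in (A \setminus \{0\})^*$, completing the proof.
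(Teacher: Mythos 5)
Your proposal is correct and follows essentially the same route as the paper: the paper extends the family to all of $A^*$ by exactly your falling-factorial formula, $c_{\mathbf{b}} := (n-r)(n-r-1)\cdots(n-r-s+1)\,c_{\mathbf{a}}$ for a word with nonzero part $\mathbf{a}$ of length $r$ and $s$ zero letters, and then applies Theorem \ref{thm: sufficient condition for evaluation}. The details you work out (the symmetry $c_{\tau\mathbf{a}}=c_{\mathbf{a}}$ ensuring well-definedness, the vanishing for long words, and the case analysis for the multiplication relation) are precisely what the paper compresses into ``it is straightforwardly verified,'' so your write-up is a faithful, slightly more explicit version of the same argument.
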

\begin{proof}
  For each $\mathbf{a} = (a_1,\dots,a_r) \in (A \setminus \{0\})^*$, $s \in \mathbb{Z}_{\geq 0}$, and $\mathbf{b} \in \mathfrak{S}_{r+s} (a_1,\dots,a_r,0,\dots,0)$, set
  \[
    c_{\mathbf{b}} := (n-r)(n-r-1) \cdots (n-r-s+1) c_\mathbf{a}.
  \]
  In particular,
  \[
    c_0 = n c_{()} = n.
  \]
  It is straightforwardly verified that
  \[
    c_a c_\mathbf{a} = c_{a*\mathbf{a}} + \sum_{l=1}^r c_{(a_1,\dots,a_l+a,\dots,a_r)} \ \text{ for all } a \in A,\ r \in \mathbb{Z}_{\geq 0},\ \mathbf{a} \in A^r.
  \]
  Then, we can apply Theorem \ref{thm: sufficient condition for evaluation} to obtain the assertion.
\end{proof}

% \newpage
\section{Classification of finite-dimensional irreducible representations}\label{sect: classification}
In this section, we classify the finite-dimensional irreducible representations of each twisted loop algebra of the second kind.
In the first four subsections below, we treat four minimal twisted loop algebras of the second kind one by one.

In this section, given a Dynkin diagram $I$ of finite type, $\mu \in \operatorname{Aut}(I)$ such that $\mu^2 = \mathrm{id}$, and $S \subseteq I^\mu$, let $\mathfrak{g}$ denote the corresponding semisimple Lie algebra, $\theta \in \operatorname{Aut}(\mathfrak{g})$ the automorphism given by equation \eqref{eq: theta(I,sigma,J)}, and $\mathcal{L}^{\mathrm{tw}} := \mathcal{L}^{\mathrm{tw}}(I,\mu,S)$.
Also, let $\mathfrak{k}$ and $\mathfrak{p}$ denote the eigenspaces of $\theta$ of eigenvalue $1$ and $-1$, respectively.
The notations $P^+$, $P_\mathfrak{k}^+$, $V(\lambda)$, $V_\mathfrak{k}(\lambda)$, $v_\lambda$, $v_\nu$ are as in Subsection \ref{subsect: tla2 as ema}.
Given a finitely generated additive abelian monoid $A$ equipped with a compatible total order, set $X := \operatorname{mSpec} \mathbb{C}[A]$.
Let $\tilde{\mathcal{L}}$ denote the Lie algebra in Subsection \ref{sect: aux Lie alg}, and $\tilde{U}$ the universal enveloping algebra of $\tilde{\mathcal{L}}$.
Set $t_\pm := t \pm t^{-1} \in \mathbb{C}[t,t^{-1}]$.

\subsection{Type $\Delta A_1$}
In this subsection, we consider the twisted loop algebra of the second kind associated with $(I,\mu,\emptyset)$, where
$I = \{1,2\}$ denotes the Dynkin diagram of type $\Delta A_1 := A_1 \times A_1$ and
$\mu \in \operatorname{Aut}(I) \simeq \mathbb{Z}/2\mathbb{Z}$ the nontrivial automorphism.
Let $A$ denote the additive abelian monoid $\mathbb{Z}$ with usual addition and ordering
Then, we have
\begin{itemize}
  \item $\mathfrak{g} = \mathfrak{sl}_2 \oplus \mathfrak{sl}_2$,
  \item $\mathfrak{k} = \langle e_1+e_2,f_1+f_2,h_1+h_2 \rangle \simeq \mathfrak{sl}_2$,
  \item $\mathfrak{p} = \operatorname{Span}\{ e_1-e_2,f_1-f_2,h_1-h_2 \}$,
  \item $P^+ = \mathbb{Z}_{\geq 0}^2$,
  \item $P_\mathfrak{k}^+ = \mathbb{Z}_{\geq 0}$,
  \item $X = \mathbb{C}^\times$.
\end{itemize}

For each $a \in \mathbb{Z}$, define the following elements in $\mathcal{L}$:
\[
  x_a := t^a \otimes e_1 + t^{-a} \otimes e_2, \quad y_a := t^a \otimes f_1 + t^{-a} \otimes f_2, \quad w_a := t^a \otimes h_1 + t^{-a} \otimes h_2.
\]
Then, the set
\[
  \{ x_a,y_a,w_a \mid a \in \mathbb{Z} \}
\]
forms a basis of $\mathcal{L}^\mathrm{tw}$, and the following hold for each $a,a' \in \mathbb{Z}$:
\begin{align}\label{eq: mul table dA1}
  \begin{split}
    &[x_a, x_{a'}] = [y_a, y_{a'}] = [w_a, w_{a'}] = 0,\\
    &[x_a, y_{a'}] = w_{a+a'},\quad [w_a, x_{a'}] = 2x_{a+a'}, \quad [w_a, y_{a'}] = -2y_{a+a'}.
  \end{split}
\end{align}
Hence, $\mathcal{L}^{\mathrm{tw}}$ is isomorphic to the loop algebra $L \otimes \mathfrak{sl}_2$.

\begin{lem}
  There exists an algebra homomorphism
  \[
    \mathrm{ev} : \tilde{U} \to U^{\mathrm{tw}}
  \]
  which sends $e_{a,n}$, $f_{a,n}$, $h_{a,n}$ to $x_a$, $y_a$, $w_a$, respectively for each $(a,n) \in A \times \mathbb{Z}_{\geq 0}$.
\end{lem}
\begin{proof}
  The assertion follows from equations \eqref{eq: mul Ltilsl2} and \eqref{eq: mul table dA1}.
\end{proof}

For each $\mathbf{a} \in A^*$, set
\[
  d_\mathbf{a} := \mathrm{ev}(\tilde{D}_\mathbf{a}(1)) \in U^{\mathrm{tw}, 0}.
\]

\begin{prop}\label{prop: properties of dbfa DeltaA1}
  For each $a \in A$, $r \in \mathbb{Z}_{\geq 0}$, and $\mathbf{a} = (a_1,\dots,a_r) \in A^r$, the following hold$:$
  \begin{enumerate}
    \item $d_{()} = 1$.
    \item $d_a = w_a$.
    \item\label{item: dbfa equiv xy DeltaA1} $d_\mathbf{a} \equiv_{> 0} x_{a_1} \cdots x_{a_r} y_0^{(r)}$.
    \item\label{item: dtaubfa = dbfa DeltaA1} $d_{\tau\mathbf{a}} = d_\mathbf{a}$ for all $\tau \in \mathfrak{S}_r$.
    \item\label{item: dabfa DeltaA1} $d_{a*\mathbf{a}} = \sum_{k=0}^r (-1)^k k! \sum_{\mathbf{l} \in P(r,k)} d_{a+|\mathbf{a}_\mathbf{l}|} d_{\mathbf{a}^\mathbf{l}}$.
    \item $d_a d_\mathbf{a} = d_{a*\mathbf{a}} + \sum_{l=1}^r d_{(a_1,\dots,a_l+a,\dots,a_r)}$.
  \end{enumerate}
\end{prop}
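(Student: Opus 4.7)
The plan is to transport nearly all of Proposition \ref{prop: properties of Dtilbfa(1)} along the algebra homomorphism $\mathrm{ev}: \tilde{U} \to U^{\mathrm{tw}}$. The key preliminary observation is that $\mathrm{ev}$ respects the relevant subalgebras: since $\mathrm{ev}(e_{a,n}) = x_a \in \mathcal{L}^{\mathrm{tw},+}$ and $\mathrm{ev}(h_{a,n}) = w_a \in \mathcal{L}^{\mathrm{tw},0}$, we have $\mathrm{ev}(\tilde{U}^{>0}) \subseteq U^{\mathrm{tw},>0}$ and $\mathrm{ev}(\tilde{U}^0) \subseteq U^{\mathrm{tw},0}$ (the latter uses that $\mathcal{L}^{\mathrm{tw},0}$ is abelian, so $\tilde{U}^0$ maps into $U^{\mathrm{tw},0}$). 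In particular, the congruence $\equiv_{>0}$ descends along $\mathrm{ev}$.

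With this in hand, properties (1), (2), (4) and (5) follow by applying $\mathrm{ev}$ to parts (1), (2), (4) and (5) of Proposition \ref{prop: properties of Dtilbfa(1)} respectively, using $\mathrm{ev}(h_{a,1}) = w_a$ for (2) and $\mathrm{ev}(w_{a+|\mathbf{a}_\mathbf{l}|, k+1}) = w_{a+|\mathbf{a}_\mathbf{l}|} = d_{a+|\mathbf{a}_\mathbf{l}|}$ for (5). Property (3) follows by applying $\mathrm{ev}$ to part (3), since $\mathrm{ev}(f_{0,1}^{(r)}) = y_0^{(r)}$ and the compatibility above preserves the congruence.

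Property (6) has no direct counterpart in Proposition \ref{prop: properties of Dtilbfa(1)} and is the substantive new content. I would prove it by induction on $r$. The case $r = 0$ is immediate; the case $r=1$ follows directly from (5), which reads $d_{a*(a_1)} = d_a d_{(a_1)} - d_{a+a_1}$. For the inductive step, isolate the $k=0$ term of (5) to get
\begin{align*}
  d_a d_\mathbf{a} = d_{a*\mathbf{a}} + \sum_{k=1}^r (-1)^{k+1} k! \sum_{\mathbf{l} \in P(r,k)} d_{a+|\mathbf{a}_\mathbf{l}|} d_{\mathbf{a}^\mathbf{l}},
\end{align*}
and apply the induction hypothesis to each inner product $d_{a+|\mathbf{a}_\mathbf{l}|} d_{\mathbf{a}^\mathbf{l}}$, whose right factor has length $r - k < r$. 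Using (4) to identify sequences that differ only by permutation, one collects the resulting terms and shows that the weighted sum collapses to $\sum_{l=1}^r d_{(a_1,\dots,a_l+a,\dots,a_r)}$.

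The main obstacle is precisely this combinatorial bookkeeping in (6): after full expansion, one produces terms indexed by nested pairs $(\mathbf{l},\mathbf{l}')$ or equivalently by ordered partitions of subsets of $\{1,\dots,r\}$, and one must verify that the coefficients $(-1)^{k+1} k!$ from (5) combine with the incrementation terms from the inductive step so that everything except the single-entry modifications of $\mathbf{a}$ cancels. This is a standard inclusion--exclusion in spirit, mirroring the proof of Proposition \ref{prop: basic properties of m_bfa}\eqref{item: m_a m_bfa}, and is the only step that is not essentially formal.
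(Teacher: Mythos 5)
Your proposal is correct and takes essentially the same route as the paper: assertions (1)--(5) are obtained by pushing Proposition \ref{prop: properties of Dtilbfa(1)} through the algebra homomorphism $\mathrm{ev}$, and (6) is proved by induction on $r$, expanding via (5) and applying the induction hypothesis to the products $d_{a+|\mathbf{a}_\mathbf{l}|}d_{\mathbf{a}^\mathbf{l}}$ with $k \geq 1$. The combinatorial collapse you flag as the main obstacle is exactly what the paper dispatches with the one-to-$(k+1)$ map \eqref{eq: (l,j) to l*j}, $(\mathbf{l},j) \mapsto \mathbf{l} \circledast l^j$: regrouping along it makes the sum telescope, leaving $d_{a*\mathbf{a}} = d_a d_\mathbf{a} - \sum_{l=1}^r d_{(a+a_l)*\mathbf{a}^l}$, which is (6) after invoking the permutation invariance (4).
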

\begin{proof}
  Since $\mathrm{ev} : \tilde{U} \to U^\mathrm{tw}$ is an algebra homomorphism, the first five assertions follow from Proposition \ref{prop: properties of Dtilbfa(1)}.
  Let us prove the last one by induction on $r$.
  The case when $r = 0$ is clear.
  Hence, assume that $r \geq 1$.
  By assertions \eqref{item: dtaubfa = dbfa DeltaA1} and \eqref{item: dabfa DeltaA1}, and the induction hypothesis, we have
  \begin{align*}
    d_{a*\mathbf{a}}
    &=
    \sum_{k=0}^r (-1)^k k! \sum_{\mathbf{l} \in P(r,k)} d_{a+|\mathbf{a}_\mathbf{l}|} d_{\mathbf{a}^\mathbf{l}}\\
    &=
    d_{a} d_\mathbf{a}\\
    &\phantom{=} + \sum_{k=1}^{r-1} (-1)^k k! \sum_{\mathbf{l} \in P(r,k)} (
      d_{(a+|\mathbf{a}_\mathbf{l}|)*\mathbf{a}^\mathbf{l}}
      + \sum_{j=1}^{r-k} d_{(a+|\mathbf{a}_\mathbf{l}|+a_{l^j})*(\mathbf{a}^{\mathbf{l} \circledast l^j})}
    )\\
    &\phantom{=} + (-1)^r r! d_{a+|\mathbf{a}|},
  \end{align*}
  By the map \eqref{eq: (l,j) to l*j}, this can be simplified as
  \[
    d_{a*\mathbf{a}} = d_{a} d_{\mathbf{a}} - \sum_{l=1}^r d_{(a+a_l)*\mathbf{a}^l}.
  \]
  Hence, the assertion follows.
\end{proof}

\begin{thm}\label{thm: classification DeltaA1}
  For each $\phi \in (\mathcal{L}^{\mathrm{tw},0})^*$, the following are equivalent:
  \begin{enumerate}
    \item $\dim V(\phi) < \infty$.
    \item There exist $n \in \mathbb{Z}_{\geq 0}$ and $\alpha_1,\dots,\alpha_n \in \mathbb{C}^\times$ such that
    \[
      \phi(w_a) = \sum_{i=1}^n \alpha_i^a \ \text{ for each } a \in \mathbb{Z}.
    \]
  \end{enumerate}
\end{thm}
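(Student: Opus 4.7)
The plan is to prove both directions separately. Sufficiency will be established by an explicit evaluation construction, while necessity will follow by applying Theorem \ref{thm: sufficient condition for evaluation} to the scalars $c_\mathbf{a} := \phi(d_\mathbf{a})$.

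For sufficiency, suppose we are given $n$ and $\alpha_1,\dots,\alpha_n \in \mathbb{C}^\times$. Regroup the $\alpha_i$ into distinct values $\beta_1,\dots,\beta_m$ with multiplicities $\mu_1,\dots,\mu_m \in \mathbb{Z}_{>0}$. The bracket relations \eqref{eq: mul table dA1} show that the assignments $x_a \mapsto \beta^a e$, $y_a \mapsto \beta^a f$, $w_a \mapsto \beta^a h$ extend to a Lie algebra homomorphism $\mathrm{ev}_\beta : \mathcal{L}^\mathrm{tw} \to \mathfrak{sl}_2$ for each $\beta \in \mathbb{C}^\times$, exhibiting $\mathcal{L}^\mathrm{tw}$ as the loop algebra $L \otimes \mathfrak{sl}_2$. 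Pulling back $V(\mu_j)$ along $\mathrm{ev}_{\beta_j}$ and tensoring yields a finite-dimensional $\mathcal{L}^\mathrm{tw}$-module $W := V(\mu_1)_{\beta_1} \otimes \cdots \otimes V(\mu_m)_{\beta_m}$. The tensor product $v := v_{\mu_1} \otimes \cdots \otimes v_{\mu_m}$ of highest weight vectors is annihilated by $\mathcal{L}^{\mathrm{tw},+}$ and satisfies $w_a v = \bigl(\sum_j \mu_j \beta_j^a\bigr) v = \bigl(\sum_i \alpha_i^a\bigr) v = \phi(w_a) v$; hence the submodule $U^\mathrm{tw} v$ is a finite-dimensional highest weight module of highest weight $\phi$. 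Since $V(\phi)$ is the unique simple quotient of $M(\phi)$, it must arise as a quotient of $U^\mathrm{tw} v$, and in particular $\dim V(\phi) < \infty$.

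For necessity, set $c_\mathbf{a} := \phi(d_\mathbf{a})$ for each $\mathbf{a} \in A^*$ with $A = \mathbb{Z}$. Since $\mathcal{L}^{\mathrm{tw},0}$ is abelian, $\phi$ extends uniquely to an algebra homomorphism $U^{\mathrm{tw},0} \to \mathbb{C}$, so the identities of Proposition \ref{prop: properties of dbfa DeltaA1} transfer directly into the recursions required by Theorem \ref{thm: sufficient condition for evaluation}: namely $c_{()} = 1$, $c_0 = \phi(w_0)$, and $c_a c_\mathbf{a} = c_{a*\mathbf{a}} + \sum_{l=1}^r c_{(a_1,\dots,a_l+a,\dots,a_r)}$. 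The key vanishing condition $c_\mathbf{a} = 0$ for $r > n := \phi(w_0)$ comes from representation theory: $(x_0,y_0,w_0)$ is an $\mathfrak{sl}_2$-triple acting on the finite-dimensional $V(\phi)$, and $v_\phi$ is a highest weight vector for this $\mathfrak{sl}_2$ of weight $\phi(w_0)$. Finite-dimensional $\mathfrak{sl}_2$-theory then forces $n \in \mathbb{Z}_{\geq 0}$ and $y_0^{n+1} v_\phi = 0$. Combined with the congruence $d_\mathbf{a} \equiv_{>0} x_{a_1}\cdots x_{a_r} y_0^{(r)}$ of Proposition \ref{prop: properties of dbfa DeltaA1}\eqref{item: dbfa equiv xy DeltaA1} and $\mathcal{L}^{\mathrm{tw},+} v_\phi = 0$, we conclude $d_\mathbf{a} v_\phi = 0$ whenever $r > n$, hence $c_\mathbf{a} = 0$. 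Theorem \ref{thm: sufficient condition for evaluation} now supplies $(\alpha_1,\dots,\alpha_n) \in (\mathbb{C}^\times)^n$ with $m_\mathbf{a}(\alpha_1,\dots,\alpha_n) = c_\mathbf{a}$; specializing to $\mathbf{a} = (a)$ yields $\phi(w_a) = c_a = \sum_i \alpha_i^a$.

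The main obstacle is entirely encapsulated in the preparatory machinery. The algebraic recursions on the $c_\mathbf{a}$ are automatic from Proposition \ref{prop: properties of dbfa DeltaA1}; the only representation-theoretic content is the length bound $c_\mathbf{a} = 0$ for $r > n$, which is precisely where finite-dimensionality of $V(\phi)$ enters through the $\mathfrak{sl}_2$-sub-triple $(x_0,y_0,w_0)$. Once this vanishing is in hand, Theorems \ref{thm: presentation of inv ring}--\ref{thm: sufficient condition for evaluation} deliver the parameters $\alpha_i$ as eigenvalues of a commutative algebra, making the classification essentially a scheme-theoretic reconstruction from power sums.
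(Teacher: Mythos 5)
Your proposal is correct and follows essentially the same route as the paper: sufficiency via finite-dimensional evaluation modules (you group repeated $\alpha_i$ into $\mathfrak{sl}_2$-evaluation modules $V(\mu_j)_{\beta_j}$, while the paper tensors the modules $V(1,0)_{\alpha_i}$ — the same idea), and necessity by feeding $c_\mathbf{a} := \phi(d_\mathbf{a})$, with the vanishing for $r > \phi(w_0)$ coming from $\mathfrak{sl}_2$-theory for the triple $(x_0,y_0,w_0)$ together with Proposition \ref{prop: properties of dbfa DeltaA1}\eqref{item: dbfa equiv xy DeltaA1}, into Theorem \ref{thm: sufficient condition for evaluation}. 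No gaps; this matches the paper's argument.
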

\begin{proof}
  First, assume the condition $(2)$, and consider the tensor product module
  \begin{align}\label{eq: V(1)a1 ot V(1)ar}
    V := V(1,0)_{\alpha_1} \otimes \cdots \otimes V(1,0)_{\alpha_l}.
  \end{align}
  Then, the submodule of $V$ generated by $v_{1,0} \otimes \cdots \otimes v_{1,0}$ is a highest weight module of highest weight $\phi$.
  This implies that $V(\phi)$ is a quotient of $V$, and hence, finite-dimensional.

  Next, assume the condition (1).
  Set
  \[
    r_0 := \max\{ r \geq 0 \mid y_0^{(r)} v_\phi \neq 0 \}.
  \]
  By Proposition \ref{prop: properties of dbfa DeltaA1} \eqref{item: dbfa equiv xy DeltaA1}, we have
  \begin{align}\label{eq: phi(dbfa)=0 DeltaA1}
    \phi(d_\mathbf{a}) = 0 \ \text{ for all } r > r_0,\ \mathbf{a} \in A^r.
  \end{align}
  Also, by the representation theory of $\mathfrak{sl}_2$, we have
  \begin{align}\label{eq: phi(d0) DeltaA1}
    \phi(d_0) = \phi(w_0) = r_0.
  \end{align}

  For each $\mathbf{a} \in A^*$, set
  \[
    c_\mathbf{a} := \phi(d_\mathbf{a}) \in \mathbb{C}.
  \]
  By Proposition \ref{prop: properties of dbfa DeltaA1}, equations \eqref{eq: phi(dbfa)=0 DeltaA1} and \eqref{eq: phi(d0) DeltaA1}, and Theorem \ref{thm: sufficient condition for evaluation}, there exist $\alpha_1,\dots,\alpha_{r_0} \in \mathbb{C}^\times$ such that
  \[
    c_\mathbf{a} = m_\mathbf{a}(\alpha_1,\dots,\alpha_{r_0}) \ \text{ for all } \mathbf{a} \in A^*.
  \]
  In particular, we obtain
  \[
    \phi(w_a) = \phi(d_a) = c_a = m_a(\alpha_1,\dots,\alpha_{r_0}) = \sum_{i=1}^{r_0} \alpha_i^a \ \text{ for all } a \in \mathbb{Z}.
  \]
  Thus, we complete the proof.
\end{proof}

\begin{rem}\label{rem: reform DeltaA1}
  Set $w_\pm := e_1 \pm e_2$, and for each $a \in \mathbb{Z}_{\geq 0}$, set
  \[
    w_{+,a} := t_+^a \otimes w_+, \quad w_{-,a} := t_+^a t_- \otimes w_-.
  \]
  Let $\phi \in (\mathcal{L}^{\mathrm{tw},0})^*$ be as in Theorem \ref{thm: classification DeltaA1} (2).
  Then, we have
  \begin{align*}
    &\phi(w_{+,a}) = \sum_{i=1}^n (\alpha_i + \alpha_i^{-1})^a, \\
    &\phi(w_{-,a}) = \sum_{i=1}^n (\alpha_i + \alpha_i^{-1})^a (\alpha_i - \alpha_i^{-1})
  \end{align*}
  for all $a \in \mathbb{Z}_{\geq 0}$.
\end{rem}

\subsection{Type $A_1$}
In this subsection, we consider the twisted loop algebra of the second kind associated with $(I,\mathrm{id},\emptyset)$, where
$I$ denotes the Dynkin diagram of type $A_1$.
Let $A$ denote the additive abelian monoid $\mathbb{Z}_{\geq 0}$ with usual addition and ordering.
Then, we have
\begin{itemize}
  \item $\mathfrak{g} = \mathfrak{sl}_2$,
  \item $\mathfrak{k} = \mathfrak{sl}_2$,
  \item $\mathfrak{p} = 0$,
  \item $P^+ = \mathbb{Z}_{\geq 0}$,
  \item $P_\mathfrak{k}^+ = \mathbb{Z}_{\geq 0}$,
  \item $X = \mathbb{C}$.
\end{itemize}

For each $a \in \mathbb{Z}_{\geq 0}$, set
\[
  x_a := t_+^a \otimes e, \quad y_a := t_+^a \otimes f, \quad w_a := t_+^a \otimes h.
\]
Then, the set
\[
  \{ x_a,y_a,w_a \mid a \in \mathbb{Z}_{\geq 0} \}
\]
forms a basis of $\mathcal{L}^\mathrm{tw}$, and the following hold for each $a,a' \in \mathbb{Z}_{\geq 0}$:
\begin{align}\label{eq: mul table A1}
  \begin{split}
    &[x_a,e_{a'}] = [y_a,y_{a'}] = [w_a,w_{a'}] = 0,\\
    &[x_a,y_{a'}] = w_{a+a'},
    \quad
    [w_a,x_{a'}] = 2x_{a+a'},
    \quad
    [w_a,y_{a'}] = -2y_{a+a'}.
  \end{split}
\end{align}
Hence, the Lie algebra $\mathcal{L}^{\mathrm{tw}}$ is isomorphic to the current algebra $\mathbb{C}[t] \otimes \mathfrak{sl}_2$.

\begin{lem}
  There exists an algebra homomorphism
  \[
    \mathrm{ev} : \tilde{U} \to U^{\mathrm{tw}}
  \]
  which sends $e_{a,n}$, $f_{a,n}$, $h_{a,n}$ to $x_a$, $y_a$, $w_a$, respectively for each $(a,n) \in A \times \mathbb{Z}_{\geq 0}$.
\end{lem}
\begin{proof}
  The assertion follows from equations \eqref{eq: mul Ltilsl2} and \eqref{eq: mul table A1}.
\end{proof}

For each $\mathbf{a} \in A^*$, set
\[
  d_\mathbf{a} := \mathrm{ev}(\tilde{D}_\mathbf{a}(1)) \in U^{\mathrm{tw}, 0}.
\]

\begin{thm}\label{thm: classification A1}
  For each $\phi \in (\mathcal{L}^{\mathrm{tw},0})^*$, the following are equivalent:
  \begin{enumerate}
    \item $\dim V(\phi) < \infty$.
    \item There exist $n \in \mathbb{Z}_{\geq 0}$ and $\alpha_1,\dots,\alpha_n \in \mathbb{C}^\times$ such that
    \[
      \phi(w_a) = \sum_{i=1}^n (\alpha_i+\alpha_i^{-1})^a \ \text{ for each } a \geq 0.
    \]
  \end{enumerate}
\end{thm}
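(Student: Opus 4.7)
The plan is to copy the argument of Theorem \ref{thm: classification DeltaA1} essentially verbatim, since the structure constants \eqref{eq: mul table A1} coincide formally with \eqref{eq: mul table dA1}; only the underlying monoid changes from $\mathbb{Z}$ to $A = \mathbb{Z}_{\geq 0}$.

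First I would establish the $A_1$-analogue of Proposition \ref{prop: properties of dbfa DeltaA1} for the elements $d_\mathbf{a} := \mathrm{ev}(\tilde{D}_\mathbf{a}(1))$. All items carry over: the statements $d_{()} = 1$, $d_a = w_a$, $d_\mathbf{a} \equiv_{> 0} x_{a_1} \cdots x_{a_r} y_0^{(r)}$, $d_{\tau \mathbf{a}} = d_\mathbf{a}$, and the expansion of $d_{a * \mathbf{a}}$ are immediate consequences of Proposition \ref{prop: properties of Dtilbfa(1)} applied through the algebra homomorphism $\mathrm{ev}$, while the recursion $d_a d_\mathbf{a} = d_{a * \mathbf{a}} + \sum_{l=1}^r d_{(a_1, \dots, a_l + a, \dots, a_r)}$ is deduced by induction on $r$ using the bijection \eqref{eq: (l,j) to l*j}, exactly as in the $\Delta A_1$ case.

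For the implication $(2) \Rightarrow (1)$, given $\alpha_1, \dots, \alpha_n \in \mathbb{C}^\times$, I would form the tensor product $V := V(1)_{\alpha_1} \otimes \cdots \otimes V(1)_{\alpha_n}$ of loop-algebra evaluation modules and restrict the action along $\mathcal{L}^{\mathrm{tw}} \hookrightarrow \mathcal{L}$. Since $t_+|_{t = \alpha_i} = \alpha_i + \alpha_i^{-1}$, the vector $v := v_1 \otimes \cdots \otimes v_1$ is annihilated by every $x_a$ and satisfies $w_a v = \sum_{i=1}^n (\alpha_i + \alpha_i^{-1})^a v = \phi(w_a) v$. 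Hence $U^{\mathrm{tw}} v$ is a finite-dimensional highest weight module of weight $\phi$, and $V(\phi)$ is one of its quotients.

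For $(1) \Rightarrow (2)$, let $r_0 := \max\{ r \geq 0 \mid y_0^{(r)} v_\phi \neq 0 \}$, which is finite by hypothesis. The congruence $d_\mathbf{a} \equiv_{>0} x_{a_1} \cdots x_{a_r} y_0^{(r)}$ forces $\phi(d_\mathbf{a}) = 0$ whenever $r > r_0$, and the $\mathfrak{sl}_2$-triple $(x_0, y_0, w_0)$ yields $\phi(d_0) = \phi(w_0) = r_0$. Setting $c_\mathbf{a} := \phi(d_\mathbf{a})$, the family $(c_\mathbf{a})_{\mathbf{a} \in A^*}$ satisfies precisely the hypotheses of Theorem \ref{thm: sufficient condition for evaluation} with $n = r_0$ and $A = \mathbb{Z}_{\geq 0}$, whose associated variety is $X = \mathbb{C}$. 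This produces $\beta_1, \dots, \beta_{r_0} \in \mathbb{C}$ with $\phi(w_a) = \sum_{i=1}^{r_0} \beta_i^a$. The only genuinely new step, compared to the $\Delta A_1$ case, is to rewrite each $\beta_i$ as $\alpha_i + \alpha_i^{-1}$ for some $\alpha_i \in \mathbb{C}^\times$; this is immediate because $\alpha^2 - \beta_i \alpha + 1 = 0$ has two roots in $\mathbb{C}^\times$. I do not foresee any real obstacle beyond verifying that the combinatorial identities of Step~1 transfer cleanly from the $\Delta A_1$ setting.
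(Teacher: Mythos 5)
Your proposal is correct and follows essentially the same route as the paper: the paper proves $(2)\Rightarrow(1)$ by exactly the same tensor product of evaluation modules $V(1)_{\alpha_1}\otimes\cdots\otimes V(1)_{\alpha_n}$, and for $(1)\Rightarrow(2)$ it simply invokes "a similar way to the proof of Theorem \ref{thm: classification DeltaA1}" — i.e.\ precisely the transfer of Proposition \ref{prop: properties of dbfa DeltaA1} to $A=\mathbb{Z}_{\geq 0}$, Theorem \ref{thm: sufficient condition for evaluation} with $X=\mathbb{C}$, and the final substitution $\beta_i=\alpha_i+\alpha_i^{-1}$ that you spell out. No gaps.
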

\begin{proof}
  First, assume the condition $(2)$, and consider the tensor product module
  \[
    V := V(1)_{\alpha_1} \otimes \cdots \otimes V(1)_{\alpha_n}.
  \]
  Then, the submodule of $V$ generated by $v_1 \otimes \dots \otimes v_1$ is a highest weight module of highest weight $\phi$.
  This implies that $V(\phi)$ is a quotient of $V$, and hence, finite-dimensional.

  Next, assume the condition (1).
  Set
  \[
    r_0 := \max\{ r \geq 0 \mid y_0^{(r)} v_\phi \neq 0 \}.
  \]
  By a similar way to the proof of Theorem \ref{thm: classification DeltaA1}, we see that there exist $\beta_1,\dots,\beta_{r_0} \in \mathbb{C}$ such that
  \[
    \phi(w_a) = m_a(\beta_1,\dots,\beta_{r_0}) = \sum_{i=1}^{r_0} \beta_i^a \ \text{ for all } a \geq 0.
  \]
  For each $i = 1,\dots,r_0$, there exists $\alpha_i \in \mathbb{C}^\times$ such that $\beta_i = \alpha_i + \alpha_i^{-1}$.
  Hence, the assertion follows.
\end{proof}

\subsection{Type $A\mathrm{I}_1$}
In this subsection, we consider the twisted loop algebra of the second kind associated with $(I,\mathrm{id},I)$, where
$I$ denotes the Dynkin diagram of type $A_1$.
Let $A$ denote the additive abelian monoid $\mathbb{Z}_{\geq 0}^2$ with the componentwise addition and the lexicographic order.
Then, we have
\begin{itemize}
  \item $\mathfrak{g} = \mathfrak{sl}_2$,
  \item $\mathfrak{k} = \mathbb{C} h \simeq \mathfrak{so}_2$,
  \item $\mathfrak{p} = \mathbb{C} e \oplus \mathbb{C} f$,
  \item $P^+ = \mathbb{Z}_{\geq 0}$,
  \item $P_\mathfrak{k}^+ = \mathbb{C}$,
  \item $X = \mathbb{C}^2$.
\end{itemize}

For each $a,b \in \mathbb{Z}_{\geq 0}$, set
\[
  x_{a,b} := t_+^a t_-^b \otimes e, \quad y_{a,b} := t_+^a t_-^b \otimes f, \quad w_{a,b} := t_+^a t_-^b \otimes h.
\]
Then, the following hold for each $a,b,a',b' \in \mathbb{Z}_{\geq 0}$:
\begin{align}\label{eq: mul table AI1}
  \begin{split}
    &[x_{a,b},e_{a',b'}] = [y_{a,b},y_{a',b'}] = [w_{a,b},w_{a',b'}] = 0,\\
    &[x_{a,b},y_{a',b'}] = w_{a+a',b+b'},
    \quad
    [w_{a,b},x_{a',b'}] = 2x_{a+a',b+b'},
    \quad
    [w_{a,b},y_{a',b'}] = -2y_{a+a',b+b'}.
  \end{split}
\end{align}

The set
\[
  \{ x_{a,1}, y_{a,1}, w_{a,0} \mid a \in \mathbb{Z}_{\geq 0} \}
\]
forms a basis of $\mathcal{L}^\mathrm{tw}$.

\begin{rem}
  The Lie algebra $\mathcal{L}^{\mathrm{tw}}$ is isomorphic to the Onsager algebra.
\end{rem}

\begin{lem}
  There exists an algebra homomorphism
  \[
    \mathrm{ev} : \tilde{U} \to U
  \]
  which sends $e_{(a,b),n}$, $f_{(a,b),n}$, $h_{(a,b),n}$ to $x_{a,b+n}$, $y_{a,b+n}$, $w_{a,b+n}$, respectively for each $((a,b),n) \in A \times \mathbb{Z}_{\geq 0}$.
\end{lem}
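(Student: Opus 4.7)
The plan is to invoke the universal property of the enveloping algebra: it is enough to produce a Lie algebra homomorphism $\overline{\mathrm{ev}} : \tilde{\mathcal{L}} \to \mathcal{L}$ sending the generators $e_{(a,b),n}, f_{(a,b),n}, h_{(a,b),n}$ to $x_{a,b+n}, y_{a,b+n}, w_{a,b+n}$ respectively, and then extend it canonically to $\mathrm{ev}: \tilde{U} \to U$.

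Since $\tilde{\mathcal{L}} = \mathbb{C}[A \times \mathbb{Z}_{\geq 0}] \otimes_{\mathbb{C}} \mathfrak{sl}_2$ with $A = \mathbb{Z}_{\geq 0}^2$, the rule $((a,b),n) \mapsto (a, b+n)$ defines a homomorphism of additive abelian monoids $A \times \mathbb{Z}_{\geq 0} \to A$, and hence a well-defined linear map $\overline{\mathrm{ev}}$ on the tensor product. The only thing to check is that $\overline{\mathrm{ev}}$ respects Lie brackets.

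By equation \eqref{eq: mul Ltilsl2}, the bracket on $\tilde{\mathcal{L}}$ is governed entirely by the $\mathfrak{sl}_2$-bracket together with addition in $A \times \mathbb{Z}_{\geq 0}$; for instance,
\[
  [e_{(a,b),m}, f_{(a',b'),n}] = h_{(a+a', b+b'), m+n}.
\]
Applying $\overline{\mathrm{ev}}$ to the right-hand side gives $w_{a+a', (b+b')+(m+n)}$, while applying it to the left-hand side and then using equation \eqref{eq: mul table AI1} gives
\[
  [x_{a, b+m}, y_{a', b'+n}] = w_{a+a', (b+m)+(b'+n)},
\]
and these agree. The remaining brackets (namely those of $e$-$e$, $f$-$f$, $h$-$h$, $h$-$e$, $h$-$f$ type) are handled identically, using the corresponding identities in \eqref{eq: mul Ltilsl2} and \eqref{eq: mul table AI1}. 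Once $\overline{\mathrm{ev}}$ is shown to be a Lie algebra homomorphism, the universal property of $\tilde{U}$ yields the desired unital algebra homomorphism $\mathrm{ev}: \tilde{U} \to U$.

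There is no real obstacle here: the verification is purely mechanical and rests on the observation that the shift $(a,b,n) \mapsto (a, b+n)$ is a monoid homomorphism, which ensures that the second coordinate $t_-^{b+n}$ in $\mathcal{L}$ behaves additively in the combined index $b+n$ exactly as $(b,n)$ does under the product structure of $A \times \mathbb{Z}_{\geq 0}$ in $\tilde{\mathcal{L}}$. The only point worth emphasizing is that $\mathrm{ev}$ takes values in $U$ rather than $U^{\mathrm{tw}}$, since the individual elements $x_{a,b}, y_{a,b}, w_{a,b}$ need not lie in $\mathcal{L}^{\mathrm{tw}}$ for arbitrary $b$; the image will however land in $U^{\mathrm{tw}}$ when one restricts to the parity-preserving combinations that appear in the subsequent applications.
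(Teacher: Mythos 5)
Your proof is correct and follows essentially the same route as the paper, which simply observes that the assertion follows from the bracket formulas \eqref{eq: mul Ltilsl2} and \eqref{eq: mul table AI1}; your write-up just makes the underlying monoid-homomorphism observation and the appeal to the universal property of $\tilde{U}$ explicit. Your closing remark that the target is $U$ rather than $U^{\mathrm{tw}}$ is also consistent with the statement as given in the paper.
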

\begin{proof}
  The assertion follows from equations \eqref{eq: mul Ltilsl2} and \eqref{eq: mul table AI1}.
\end{proof}

Let $\mathbb{Z}_{> 0, \mathrm{odd}}$ denote the set of positive odd integers, and set $B := \mathbb{Z}_{\geq 0} \times \mathbb{Z}_{> 0,\mathrm{odd}}$.
For each $\mathbf{a} \in B^*$, set
\[
  d_\mathbf{a} := \mathrm{ev}(\tilde{D}_\mathbf{a}(1)) \in U^{\mathrm{tw}, 0}.
\]

\begin{prop}\label{prop: properties of dbfa AI1}
  For each $(a,b) \in B$, $r \in \mathbb{Z}_{\geq 0}$, $\mathbf{a} = ((a_1,b_1),\dots,(a_r,b_r)) \in B^r$, the following hold.
  \begin{enumerate}
    \item $d_{()} = 1$.
    \item $d_{(a,b)} = w_{a,b+1}$.
    \item\label{item: dbfa equiv xy AI1} $d_\mathbf{a} \equiv_{> 0} x_{a_1,b_1} \cdots x_{a_r,b_r} y_{0,1}^{(r)}$.
    \item $d_{\tau \mathbf{a}} = d_\mathbf{a}$ for all $\tau \in \mathfrak{S}_r$.
    \item $d_{(a,b)*\mathbf{a}} = \sum_{k=0}^r (-1)^k k! \sum_{\mathbf{l} \in P(r,k)} d_{(a,b)+|\mathbf{a}_\mathbf{l}|+k} d_{\mathbf{a}^\mathbf{l}}$.
    \item $d_{(a,b)} d_\mathbf{a} = d_{(a,b)*\mathbf{a}} + \sum_{l=1}^r d_{((a_1,b_1),\dots,(a_l+a,b_l+b),\dots,(a_r,b_r))}$.
  \end{enumerate}
\end{prop}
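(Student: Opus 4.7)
The proof follows the template of Proposition \ref{prop: properties of dbfa DeltaA1} for type $\Delta A_1$: I would transport each assertion of Proposition \ref{prop: properties of Dtilbfa(1)} through the evaluation homomorphism $\mathrm{ev} : \tilde{U} \to U$, and then deduce the multiplicative identity (6) from (4) and (5) by a direct induction. The only new bookkeeping, compared with the $\Delta A_1$ case, arises from the fact that $\mathrm{ev}$ now shifts the second coordinate: $\mathrm{ev}(h_{(a,b), n}) = w_{a, b+n}$, and similarly for $e$ and $f$.

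Items (1) and (2) are immediate: $d_{()} = \mathrm{ev}(1) = 1$, and $d_{(a,b)} = \mathrm{ev}(\tilde{D}_{(a,b)}(1)) = \mathrm{ev}(h_{(a,b),1}) = w_{a, b+1}$. Item (3) is obtained by applying $\mathrm{ev}$ to the congruence in Proposition \ref{prop: properties of Dtilbfa(1)} \eqref{item: Dtilbfas(1) equiv ef}, using $\mathrm{ev}(\tilde{U}^{>0}) \subseteq U^{>0}$ together with $\mathrm{ev}(e_{(a_i, b_i), 0}) = x_{a_i, b_i}$ and $\mathrm{ev}(f_{(0,0), 1}) = y_{0,1}$. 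Item (4) follows directly from Proposition \ref{prop: properties of Dtilbfa(1)} \eqref{item: Dtiltaubfa(1)}.

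For item (5), the key observation is that under $\mathrm{ev}$,
\[
  \mathrm{ev}\bigl(h_{(a,b)+|\mathbf{a}_\mathbf{l}|,\, k+1}\bigr)
  = w_{a+|\mathbf{a}_\mathbf{l}|_1,\, b+|\mathbf{a}_\mathbf{l}|_2+k+1}
  = d_{(a,b)+|\mathbf{a}_\mathbf{l}|+(0,k)},
\]
where $|\mathbf{a}_\mathbf{l}|_1, |\mathbf{a}_\mathbf{l}|_2$ denote the two coordinates of $|\mathbf{a}_\mathbf{l}| \in A = \mathbb{Z}_{\geq 0}^2$, and the last equality is by (2). This pins down the shorthand ``$+k$'' in the statement as addition of $(0,k)$ in the second coordinate. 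Since every $b_i$ is odd, $b+|\mathbf{a}_\mathbf{l}|_2+k$ has the same parity as $1+k+k \equiv 1 \pmod{2}$, so the shifted index lies in $B$ and each summand is a well-defined element of $U^{\mathrm{tw},0}$. Combining this with Proposition \ref{prop: properties of Dtilbfa(1)} \eqref{item: Dtilabfa(1)} under $\mathrm{ev}$ yields (5).

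Finally, (6) is proved by induction on $r$, mirroring the argument for Proposition \ref{prop: properties of dbfa DeltaA1} (6). One isolates the $k=0$ and $k=r$ terms from (5), which contribute $d_{(a,b)} d_\mathbf{a}$ and $(-1)^r r!\, d_{(a,b)+|\mathbf{a}|}$ respectively, applies the inductive hypothesis to the middle terms $d_{(a,b)+|\mathbf{a}_\mathbf{l}|} d_{\mathbf{a}^\mathbf{l}}$ for $1 \leq k \leq r-1$, and reindexes using the bijection \eqref{eq: (l,j) to l*j}. Everything then collapses to the identity $d_{(a,b)*\mathbf{a}} = d_{(a,b)} d_\mathbf{a} - \sum_{l=1}^r d_{((a,b)+(a_l,b_l))*\mathbf{a}^l}$; invoking (4) to identify words up to permutation gives (6). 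The only obstacle is the parity check in the second coordinate and the combinatorial reindexing, both of which are straightforward and do not require any ideas beyond those already used in the $\Delta A_1$ proof.
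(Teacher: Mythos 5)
Your treatment of items (1)--(5) is correct and is exactly the paper's (implicit) route: the paper's proof is literally ``argue as for Proposition \ref{prop: properties of dbfa DeltaA1}'', and your reading of the shorthand ``$+k$'' as $+(0,k)$, together with the parity check that keeps the index in $B$, is the right bookkeeping for transporting Proposition \ref{prop: properties of Dtilbfa(1)} through $\mathrm{ev}$.

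There is, however, a genuine gap in your item (6), and it sits precisely in the bookkeeping you declare straightforward. The point you track correctly in (5) --- that $\mathrm{ev}(h_{(a,b)+|\mathbf{a}_\mathbf{l}|,\,k+1})=w_{a+|\mathbf{a}_\mathbf{l}|_1,\,b+|\mathbf{a}_\mathbf{l}|_2+k+1}$, i.e.\ the $\mathbb{Z}_{\geq 0}$-grading is converted into a shift of the \emph{second} coordinate --- is silently dropped when you run the induction for (6): you write the middle terms as $d_{(a,b)+|\mathbf{a}_\mathbf{l}|}d_{\mathbf{a}^\mathbf{l}}$ instead of $d_{(a,b)+|\mathbf{a}_\mathbf{l}|+(0,k)}d_{\mathbf{a}^\mathbf{l}}$, and your claimed collapse
\[
  d_{(a,b)*\mathbf{a}} = d_{(a,b)}d_\mathbf{a} - \sum_{l=1}^r d_{((a,b)+(a_l,b_l))*\mathbf{a}^l}
\]
is false under the conventions you set up. Already for $r=1$ one computes $\tilde{D}_{((a,b),(a_1,b_1))}(1)=h_{(a,b),1}h_{(a_1,b_1),1}-h_{(a+a_1,b+b_1),2}$, so
\[
  d_{(a,b)}d_{(a_1,b_1)} - d_{((a,b),(a_1,b_1))} = w_{a+a_1,\,b+b_1+2} = d_{(a+a_1,\,b+b_1+1)},
\]
not $d_{(a+a_1,b+b_1)}$ (note $(a+a_1,b+b_1)\notin B$, so the latter is not even covered by the definition you are using, a sign this case was not checked). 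In general the correction letters acquire an extra $+1$ in the second coordinate: the identity one actually obtains, and the one that stays inside $B^*$ and is what the reindexing $(a,b)\mapsto(a,2(a+b)-1)$ in the proof of Theorem \ref{thm: classification AI1} requires, has modified letter $(a_l+a,\,b_l+b+1)$. Moreover your telescoping cannot close with the unshifted inductive hypothesis: applying it to $d_{(a,b)+|\mathbf{a}_\mathbf{l}|+(0,k)}d_{\mathbf{a}^\mathbf{l}}$ produces words whose leading letter differs by exactly $(0,1)$ from the $k+1$-level terms they are supposed to cancel, so the bijection \eqref{eq: (l,j) to l*j} does not collapse the sum. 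So (6) must either be proved in the shifted form just described (with a remark reconciling it with the statement's indexing) or the statement's indices interpreted accordingly; as written, your argument for (6) does not go through.
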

\begin{proof}
  The assertion can be proved in a similar way to Proposition \ref{prop: properties of dbfa DeltaA1}.
\end{proof}

We will frequently use the following formula:

\begin{lem}\label{lem: binom formula}
  Let $a,b,c \in \mathbb{Z}_{\geq 0}$.
  Then, we have
  \[
    w_{a+2c,b} = \sum_{j=0}^c \binom{c}{j} 4^{c-j} w_{a,b+2j}.
  \]
\end{lem}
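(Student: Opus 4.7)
The plan is to prove this by expanding a power of $t_+^2$ in terms of $t_-^2$, using the single key identity
\[
  t_+^2 - t_-^2 = (t+t^{-1})^2 - (t-t^{-1})^2 = 4.
\]
Once this identity is in hand, everything else is a routine manipulation of scalars in $\mathbb{C}[t,t^{-1}]$ followed by tensoring with $h$.

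More precisely, I would first rewrite $t_+^2 = t_-^2 + 4$ and apply the binomial theorem to get
\[
  t_+^{2c} = (t_-^2 + 4)^c = \sum_{j=0}^c \binom{c}{j} 4^{c-j} t_-^{2j}.
\]
Multiplying both sides by $t_+^a t_-^b$ yields
\[
  t_+^{a+2c} t_-^b = \sum_{j=0}^c \binom{c}{j} 4^{c-j} t_+^a t_-^{b+2j}.
\]
Tensoring with $h$ and recalling the definition $w_{a,b} = t_+^a t_-^b \otimes h$ from the previous subsection then gives the claimed identity.

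There is essentially no obstacle: the argument is purely combinatorial and relies only on the algebra structure of $\mathbb{C}[t,t^{-1}]$. The one thing to double-check is that $b+2j \geq 0$ for all $j$ in the range of summation, which is automatic since $b \in \mathbb{Z}_{\geq 0}$ and $j \geq 0$; this ensures each $w_{a,b+2j}$ is a well-defined element of $\mathcal{L}^{\mathrm{tw}}$ as introduced earlier in this subsection.
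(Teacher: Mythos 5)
Your proof is correct and is essentially identical to the paper's: both rest on the identity $t_+^2 = t_-^2 + 4$, a binomial expansion of $(t_-^2+4)^c$, multiplication by $t_+^a t_-^b$, and then tensoring with $h$.
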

\begin{proof}
  Since $t_+^2 = t_-^2+4$, we have
  \[
    t_+^{a+2c} t_-^b = t_+^a(t_-^2+4)^ct_-^b = \sum_{j=0}^c \binom{c}{j} 4^{c-j} t_+^at_-^{b+2j}.
  \]
  This implies the assertion.
\end{proof}

\begin{thm}\label{thm: classification AI1}
  For each $\phi \in (\mathcal{L}^{\mathrm{tw},0})^*$, the following are equivalent:
  \begin{enumerate}
    \item $\dim V(\phi) < \infty$.
    \item There exist $\nu_1,\nu_{-1} \in \mathbb{C}$, $n \in \mathbb{Z}_{\geq 0}$, and $\alpha_1,\dots,\alpha_n \in \mathbb{C}^\times \setminus \{ \pm 1 \}$ such that
    \[
      \phi(w_{a,0}) = \nu_1 \cdot 2^a + \nu_{-1} \cdot (-2)^a + \sum_{i=1}^n (\alpha_i+\alpha_i^{-1})^a \ \text{ for all } a \geq 0.
    \]
  \end{enumerate}
\end{thm}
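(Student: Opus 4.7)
The plan is to mimic the strategy used for Theorems \ref{thm: classification DeltaA1} and \ref{thm: classification A1}: produce sufficiency by constructing a tensor product of evaluation modules, and treat necessity by passing from finite-dimensionality to the vanishing of the scalars $c_\mathbf{a} := \phi(d_\mathbf{a})$, applying the existence result for spectra from Theorem \ref{thm: sufficient condition for evaluation2}, and then translating the spectral data back into the claimed form through the recursion coming from Lemma \ref{lem: binom formula}.

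For $(2) \Rightarrow (1)$, I would set
\[
  V := V_\mathfrak{k}(\nu_1)_1 \otimes V_\mathfrak{k}(\nu_{-1})_{-1} \otimes V(1)_{\alpha_1} \otimes \cdots \otimes V(1)_{\alpha_n},
\]
a finite-dimensional tensor product of evaluation modules of $\mathcal{L}$ restricted to $\mathcal{L}^\mathrm{tw}$. The vector $v := 1 \otimes 1 \otimes v_1 \otimes \cdots \otimes v_1$ is annihilated by $\mathcal{L}^{\mathrm{tw},+}$, because each factor's highest weight vector is killed by the image of $\mathcal{L}^{\mathrm{tw},+}$ under the corresponding evaluation into $\mathfrak{g}^x$. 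The scalar by which $w_{a,0} = t_+^a \otimes h$ acts on the $t = \pm 1$ factors is $(\pm 2)^a \nu_{\pm 1}$, and on each $V(1)_{\alpha_i}$ it is $(\alpha_i + \alpha_i^{-1})^a$; summing yields exactly $\phi(w_{a,0})$. Therefore $V(\phi)$ is a quotient of the finite-dimensional submodule $U^\mathrm{tw} v \subseteq V$.

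For $(1) \Rightarrow (2)$, set $r_0 := \max\{r \geq 0 \mid y_{0,1}^{(r)} v_\phi \neq 0\}$, which is finite by hypothesis, and let $c_\mathbf{a} := \phi(d_\mathbf{a}) \in \mathbb{C}$ for $\mathbf{a} \in B^*$. Proposition \ref{prop: properties of dbfa AI1}(3) gives the identity $d_\mathbf{a} v_\phi = x_{a_1,b_1} \cdots x_{a_r,b_r} y_{0,1}^{(r)} v_\phi$ modulo the action of $U^{\mathrm{tw},>0}$ (which annihilates $v_\phi$), and this vanishes for $r > r_0$; hence $c_\mathbf{a} = 0$ there. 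Combined with parts (1) and (6) of the same Proposition, the family $(c_\mathbf{a})$ satisfies the hypotheses of Theorem \ref{thm: sufficient condition for evaluation2} applied to $A = \mathbb{Z}_{\geq 0}^2$ and $n = r_0$, producing a multiset $\mathbf{p} = \{(\beta_1,\gamma_1),\dots,(\beta_{r_0},\gamma_{r_0})\} \subset \mathbb{C}^2$ with $c_\mathbf{a} = m_\mathbf{a}(\mathbf{p})$; in particular, $\phi(w_{a,2j}) = \sum_i \beta_i^a \gamma_i^{2j-1}$ for every $j \geq 1$.

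Finally, Lemma \ref{lem: binom formula} yields, for each $j \geq 0$,
\[
  \phi(w_{a,2j}) = \sum_{k=0}^j \binom{j}{k} (-4)^{j-k} \phi(w_{a+2k,0}).
\]
The $j = 1$ case is a second-order linear recurrence in $a$ with characteristic roots $\pm 2$: its homogeneous solutions contribute $\nu_1 \cdot 2^a + \nu_{-1} \cdot (-2)^a$ (encoded in the two initial values $\phi(w_{0,0}), \phi(w_{1,0})$), while each nonresonant spectral point yields a particular contribution proportional to $\beta_i^a = (\alpha_i + \alpha_i^{-1})^a$ once we write $\beta_i = \alpha_i + \alpha_i^{-1}$ with $\alpha_i \in \mathbb{C}^\times \setminus \{\pm 1\}$. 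The main technical hurdle is to eliminate two obstructions: spectral points with $\beta_i = \pm 2$, which would produce resonant $a \cdot (\pm 2)^a$ terms incompatible with (2); and nonresonant points failing $\gamma_i^2 = \beta_i^2 - 4$, which would produce coefficients inconsistent with the integer multiplicities implicit in $\sum_{i=1}^n (\alpha_i + \alpha_i^{-1})^a$. Both are expected to follow from the higher-$j$ instances of the displayed identity, which give an infinite family of polynomial equations forcing the spectral data into the required form.
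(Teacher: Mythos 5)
The sufficiency half of your argument is the same as the paper's and is fine. The gap is in the necessity half, at the moment you feed $c_\mathbf{a}:=\phi(d_\mathbf{a})$, $\mathbf{a}\in B^*$, into Theorem \ref{thm: sufficient condition for evaluation2} with $A=\mathbb{Z}_{\geq 0}^2$. That theorem requires a family indexed by all of $(A\setminus\{0\})^*$ satisfying the relations for \emph{componentwise} addition, whereas your index set $B=\mathbb{Z}_{\geq 0}\times\mathbb{Z}_{>0,\mathrm{odd}}$ is neither $A\setminus\{0\}$ nor closed under addition, and, more seriously, the product rule actually satisfied by the $d$'s is shifted in the second coordinate because the evaluation map sends $h_{(a,b),n}$ to $w_{a,b+n}$: already for $r=1$ one computes $d_{(a,b)}d_{(a',b')}=d_{((a,b),(a',b'))}+w_{a+a',b+b'+2}=d_{((a,b),(a',b'))}+d_{(a+a',b+b'+1)}$, not $+\,d_{(a+a',b+b')}$ (so Proposition \ref{prop: properties of dbfa AI1}(6) must be read with this shift). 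Consequently the hypotheses of Theorem \ref{thm: sufficient condition for evaluation2} are not satisfied by your family, and the identity you extract, $\phi(w_{a,2j})=\sum_i\beta_i^a\gamma_i^{2j-1}$, has the wrong shape and is false in general: for $\phi$ coming from a single evaluation module $V(1)_\alpha$ one has $\phi(w_{0,2j})=\bigl((\alpha-\alpha^{-1})^2\bigr)^j$, and for generic $\alpha$ no finite multiset satisfies $\sum_i\gamma_i^{2j-1}=\gamma^j$ for all $j\geq 1$. The paper's proof resolves exactly this point by a reindexing, setting $c_{((a_1,b_1),\dots,(a_r,b_r))}:=\phi(d_{((a_1,2(a_1+b_1)-1),\dots,(a_r,2(a_r+b_r)-1))})$; under $(a,b)\mapsto(a,2(a+b)-1)$ the shifted rule becomes genuine addition on $\mathbb{Z}_{\geq 0}^2\setminus\{0\}$, and Theorem \ref{thm: sufficient condition for evaluation2} then gives $\phi(w_{a,2(a+b)})=\sum_i\delta_i^a\gamma_i^b$. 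This reindexing is the key idea missing from your proposal.

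Even granting correctly obtained spectral data, the final paragraph of your proposal defers precisely the substantive part of the argument. The paper must still: prove the compatibility constraint $\delta_i^2=(\gamma_i+4)\gamma_i^2$ by evaluating $c_{(2k,l)}$ in two ways via Lemma \ref{lem: binom formula} and applying Proposition \ref{prop: coincidence of p'} twice; discard the points with $\gamma_i=0$; reconstruct $\phi(w_{a,0})$ from the values $\phi(w_{a,2j})$, $j\geq 1$, observing that $\phi(w_{0,0})$ and $\phi(w_{1,0})$ are \emph{not} determined by the $c$'s and are exactly the two free parameters that become $\nu_{\pm 1}$; and deduce $\alpha_i\neq\pm 1$ from $\beta_i^2=\gamma_i+4\neq 4$ (your suggested mechanism of excluding resonant $a\cdot(\pm2)^a$ terms from a second-order recurrence is not how this exclusion works, and the recurrence by itself does not determine the particular solution without the compatibility constraint). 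As written, the necessity direction is therefore not established.
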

\begin{proof}
  First, assume the condition $(2)$, and consider the tensor product module
  \[
    V := V_\mathfrak{k}(\nu_1)_1 \otimes V_\mathfrak{k}(\nu_{-1})_{-1} \otimes V(1)_{\alpha_1} \otimes \cdots \otimes V(1)_{\alpha_n}.
  \]
  Then, the submodule of $V$ generated by $v_{\nu_1} \otimes v_{\nu_{-1}} \otimes v_1 \otimes \dots \otimes v_1$ is a highest weight module of highest weight $\phi$.
  This implies that $V(\phi)$ is a quotient of $V$, and hence, finite-dimensional.

  Next, assume the condition $(1)$.
  Set
  \[
    r_0 := \max\{ r \geq 0 \mid y_{0,1}^{(r)} v_\phi \neq 0 \}.
  \]
  By Proposition \ref{prop: properties of dbfa AI1} \eqref{item: dbfa equiv xy DeltaA1}, we have
  \begin{align}\label{eq: phi(dbfa)=0 AI1}
    \phi(d_\mathbf{a}) = 0 \ \text{ for all } r > r_0 \text{ and } \mathbf{a} \in B^r.
  \end{align}
  For each $\mathbf{a} = ((a_1,b_1),\dots,(a_r,b_r)) \in (A \setminus \{(0,0)\})^*$, set
  \[
    c_\mathbf{a} := \phi(d_{((a_1,2(a_1+b_1)-1),\dots,(a_r,2(a_r+b_r)-1))}) \in \mathbb{C}.
  \]
  By Proposition \ref{prop: properties of dbfa AI1}, equation \eqref{eq: phi(dbfa)=0 AI1}, and Theorem \ref{thm: sufficient condition for evaluation2}, there exist $\delta_1,\dots,\delta_{r_0},\gamma_1,\dots,\gamma_{r_0} \in \mathbb{C}$ such that
  \[
    c_\mathbf{a} = m_\mathbf{a}((\delta_1,\gamma_1),\dots,(\delta_{r_0},\gamma_{r_0})) \ \text{ for all } \mathbf{a} \in (A \setminus \{(0,0)\})^*.
  \]
  In particular, we have
  \[
    \phi(w_{a,2(a+b)}) = \phi(d_{(a,2(a+b)-1)}) = c_{(a,b)} = \sum_{i=1}^{r_0} \delta_i^a \gamma_i^{b} \ \text{ for all } (a,b) \in A \setminus \{(0,0)\}.
  \]

  Note that we may freely rearrange the tuple $((\delta_1,\gamma_1),\dots,(\delta_{r_0},\gamma_{r_0}))$.
  For convenience, we set $r_1 := \sharp \{ i \mid \gamma_i \neq 0 \}$ and assume that $\gamma_i = 0$ for all $i > r_1$.

  For each $k \in \mathbb{Z}_{> 0}$ and $l \in \mathbb{Z}_{\geq 0}$, using Lemma \ref{lem: binom formula}, we obtain
  \[
    \sum_{i=1}^{r_0} \delta_i^{2k} \gamma_i^l = c_{(2k,l)} = \phi(w_{2k,4k+2l}) = \sum_{i=1}^{r_0} (\gamma_i+4)^k \gamma_i^{2k+l}.
  \]
  This implies that
  \[
    p'_{l+1}(\gamma_1,\dots,\gamma_{r_0};\delta_1^{2k},\dots,\delta_{r_0}^{2k}) = p'_{l+1}(\gamma_1,\dots,\gamma_{r_0};\tilde{\beta}_1^{k},\dots,\tilde{\beta}_{r_0}^{k}) \ \text{ for all } l \geq 0,
  \]
  where $\tilde{\beta}_i := (\gamma_i+4)\gamma_i^2$.
  By Proposition \ref{prop: coincidence of p'}, we have
  \[
    \sum_{\substack{1 \leq i \leq r_0\\\gamma_i=\epsilon}} \delta_i^{2k} = \sum_{\substack{1 \leq i \leq r_0\\\gamma_i=\epsilon}} \tilde{\beta_i}^k \ \text{ for all } k \geq 1, \ \epsilon \in \mathbb{C}.
  \]
  Again by Proposition \ref{prop: coincidence of p'}, for each $\gamma$, we obtain
  \[
    \sharp\{ i \mid \gamma_i = \epsilon_1 \text{ and } \delta_i^2 = \epsilon_2 \} = \sharp\{ i \mid \gamma_i = \epsilon_1 \text{ and } \tilde{\beta}_i = \epsilon_2 \} \ \text{ for all } \epsilon_1 \in \mathbb{C}, \text{ } \epsilon_2 \in \mathbb{C}^\times.
  \]
  By the definition of $\tilde{\beta}_i$, we have
  \[
    \sharp\{ i \mid \gamma_i = \epsilon_1 \text{ and } \tilde{\beta}_i = \epsilon_2 \} = \begin{cases}
      \sharp\{ i \mid \gamma_i = \epsilon_1 \} & \text{ if } \epsilon_2 = (\epsilon_1+4)\epsilon_1^2, \\
      0 & \text{ if } \epsilon_2 \neq (\epsilon_1+4)\epsilon_1^2.
    \end{cases}
  \]
  Therefore, we see that
  \[
    \delta_i^2 = (\gamma_i+4)\gamma_i^2 \ \text{ for all } i = 1,\dots,r_0.
  \]
  
  Set $\beta_i := \delta_i \gamma_i^{-1}$ for each $i = 1,\dots,r_1$.
  Then, we have
  \begin{align}\label{eq: beta = gamma + 4 neq 4}
    \beta_i^2 = \gamma_i + 4 \neq 4 \ \text{ for all } i = 1,\dots,r_0.
  \end{align}

  For each $k \in \mathbb{Z}_{> 0}$, we have
  \begin{align*}
    &\phi(w_{0,2k}) = c_{(0,k)} = \sum_{i=1}^{r_0} \gamma_i^k = \sum_{i=1}^{r_1} \gamma_i^k,\\
    &\phi(w_{1,2k}) = c_{(1,k-1)} = \sum_{i=1}^{r_0} \delta_i \gamma_i^{k-1} = \sum_{i=1}^{r_1} \beta_i \gamma_i^{k}
  \end{align*}
  Then, for each $a \in \mathbb{Z}_{> 0}$, we obtain
  \begin{align*}
    \phi(w_{2a,0})
    &= \phi(\sum_{j=0}^a \binom{a}{j} 4^{a-j} w_{0,2j})\\
    &= \sum_{j=0}^a \binom{a}{j} 4^{a-j} \sum_{i=1}^{r_1} \gamma_i^j + 4^a(\phi(w_{0,0})-r_1)\\
    &= \sum_{i=1}^{r_1} (\gamma_i+4)^a + 4^a(\phi(w_{0,0})-r_1)\\
    &= \sum_{i=1}^{r_1} \beta_i^{2a} + 4^a(\phi(w_{0,0})-r_1),
  \end{align*}
  and
  \begin{align*}
    \phi(w_{2a+1,0})
    &= \phi(\sum_{j=0}^a \binom{a}{j} 4^{a-j} w_{1,2j})\\
    &= \sum_{j=0}^a \binom{a}{j} 4^{a-j} \sum_{i=1}^{r_1} \beta_i \gamma_i^j + 4^a(\phi(w_{1,0})-\sum_{i=1}^{r_1} \beta_i)\\
    &= \sum_{i=1}^{r_1} \beta_i (\gamma_i+4)^a + 4^a(\phi(w_{1,0})-\sum_{i=1}^{r_1} \beta_i)\\
    &= \sum_{i=1}^{r_1} \beta_i^{2a+1} + 4^a(\phi(w_{1,0})-\sum_{i=1}^{r_1} \beta_i).
  \end{align*}
  Define $\nu_1,\nu_{-1} \in \mathbb{C}$ by
  \[
    \nu_1+\nu_{-1} = \phi(w_{0,0})-r_1, \quad \nu_1-\nu_{-1} = \frac{1}{2}(\phi(w_{1,0})-\sum_{i=1}^{r_1} \beta_i).
  \]
  Then, we have
  \[
    \phi(w_{a,0}) = \nu_1 \cdot 2^a + \nu_{-1} \cdot (-2)^a + \sum_{i=1}^{r_1} \beta_i^a \ \text{ for all } a \in \mathbb{Z}_{\geq 0}.
  \]
  For each $i = 1,\dots,r_1$, there exists $\alpha_i \in \mathbb{C}^\times$ such that $\beta_i = \alpha_i + \alpha_i^{-1}$.
  The condition \eqref{eq: beta = gamma + 4 neq 4} implies that $\alpha_i \neq \pm 1$ for all $i = 1,\dots,r_1$.
  Thus, we complete the proof.
\end{proof}

\subsection{Type $A\mathrm{I}_2$}
In this subsection, we consider the twisted loop algebra of the second kind associated with $(I,\mu,\emptyset)$, where
$I$ denotes the Dynkin diagram of type $A_2$ and $\mu \in \operatorname{Aut}(I) \simeq \mathbb{Z}/2\mathbb{Z}$ the nontrivial automorphism.
Let $A$ denote the additive abelian monoid $\mathbb{Z}_{\geq 0}^2$ with the componentwise addition and the lexicographic order.
Then, we have
\begin{itemize}
  \item $\mathfrak{g} = \mathfrak{sl}_3$,
  \item $\mathfrak{k} = \operatorname{Span}\{ x_+,y_+,w_+ \} \simeq \mathfrak{so}_3$,
  \item $\mathfrak{p} = \operatorname{Span}\{ X,x_-,w_-,y_-,Y \}$,
  \item $P^+ = \{ (a,b) \in \mathbb{Z}_{\geq 0}^2 \mid a \geq b \}$,
  \item $P_\mathfrak{k}^+ = \frac{1}{2} \mathbb{Z}_{\geq 0}$,
  \item $X = \mathbb{C}^2$,
\end{itemize}
where
\[
  Y := [f_1,f_2], \ y_\pm := f_1 \pm f_2, \ w_\pm := h_1 \pm h_2, \ x_\pm := e_1 \pm e_2, \ X := [e_2,e_1].
\]

For each $(a,b) \in A$ and $x \in \mathfrak{g}$, set
\[
  x_{(a,b)} = x_{a,b} := t_+^a t_-^b \otimes x \in \mathcal{L}.
\]

For each $\mathbf{a} \in A^*$, define $d_\mathbf{a} \in U^{\mathrm{tw}, 0}$ as follows:
\begin{itemize}
  \item $d_{()} = 1$,
  \item $d_{(a,b)} = \begin{cases}
    w_{+,a,b} & \text{ if $b$ is even},\\
    -w_{-,a,b} & \text{ if $b$ is odd},
  \end{cases}$
  \item $d_{(a,b)*\mathbf{a}} = d_{(a,b)}d_\mathbf{a} - \sum_{l=1}^r d_{((a_1,b_1),\dots,(a_l+a,b_l+b),\dots,(a_r,b_r))}$.
\end{itemize}

Set
\[
  u_{-1} := 2y_+, \quad u_0 := 2w_+, \quad u_1 := -x_+
\]
and
\[
  v_{-2} := 4Y, \quad v_{-1} := -4y_-, \quad v_0 := -2w_-, \quad v_1 := 2x_-, \quad v_2 := X.
\]
Then, for each $r \in \mathbb{Z}_{\geq 0}$, we have
\[
  x_+^{(r)} u_{-1} = u_{r-1}, \quad x_+^{(r)} v_{-2} = v_{r-2},
\]
where we set $u_s = 0$ if $s  > 1$ and $v_s = 0$ if $s > 2$.

For each $\mathbf{a} = ((a_1,b_1),\dots,(a_r,b_r)) \in A^r$, set
\[
  v_{-2,\mathbf{a}} := v_{-2,(a_1,b_1)} \cdots v_{-2,(a_r,b_r)} \in U.
\]

\begin{lem}\label{lem: x2r+2 v-2b AI2}
  For each $(a,b) \in A$, $r \in \mathbb{Z}_{\geq 0}$, and $\mathbf{a} = ((a_1,b_1),\dots,(a_r,b_r)) \in A^r$, we have
  \begin{align*}
    x_{+,0,0}^{(2r+2)} v_{-2,(a,b)*\mathbf{a}}
    &\equiv_{>0} v_{0,(a,b)} x_{+,0,0}^{(2r)} v_{-2,\mathbf{a}}\\
    &\phantom{\equiv} -2 \sum_{l=1}^r x_{+,0,0}^{(2r-2)} v_{-2,\mathbf{a}^l} u_{0,|\mathbf{a}_l|+(a,b)}\\
    &\phantom{\equiv} + 8\sum_{1 \leq l_1 < l_2 \leq r} x_{+,0,0}^{(2r-2)} v_{-2,(|\mathbf{a}_{(l_1,l_2)}|+(a,b))*\mathbf{a}^{(l_1,l_2)}}.
  \end{align*}
\end{lem}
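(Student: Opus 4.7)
The plan is a direct expansion exploiting the $\mathfrak{sl}_2$-structure of the triple $\{x_+,y_+,w_+\}\subset\mathfrak k$ acting on the five-dimensional $\mathfrak k$-module $\mathfrak p = \operatorname{span}\{v_{-2},v_{-1},v_0,v_1,v_2\}$. The main tool is the Leibniz-type identity
\[
x_{+,0,0}^{(n)}\, v_{-2,(a,b)} \;=\; \sum_{k=0}^{\min(n,4)} v_{k-2,(a,b)}\, x_{+,0,0}^{(n-k)},
\]
valid in $\tilde U$ for every $n\geq 0$ and $(a,b)\in A$; it follows from $\operatorname{ad}(x_+)^k(v_{-2}) = k!\,v_{k-2}$ for $0\leq k\leq 4$ together with $x_+^n y = \sum_k\binom{n}{k}\operatorname{ad}(x_+)^k(y)\,x_+^{n-k}$. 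Applying it to the leading factor of $v_{-2,(a,b)*\mathbf a}=v_{-2,(a,b)}\,v_{-2,\mathbf a}$ splits the left-hand side as $T_0+T_1+T_2+T_3+T_4$ with $T_k = v_{k-2,(a,b)}\,x_{+,0,0}^{(2r+2-k)}\,v_{-2,\mathbf a}$; the summand $T_2$ is already the first term on the right-hand side.

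For $T_0$ and $T_1$ I would show $x_{+,0,0}^{(n)}v_{-2,\mathbf a}\in\tilde U^{>0}$ when $n\in\{2r+1,2r+2\}$. Iterating the Leibniz identity through the $r$ factors gives a sum of monomials $v_{k_1-2,(a_1,b_1)}\cdots v_{k_r-2,(a_r,b_r)}\,x_{+,0,0}^{(n-\sum k_i)}$ over $(k_1,\dots,k_r)\in\{0,\dots,4\}^r$. If $\sum k_i<n$ the trailing power of $x_{+,0,0}$ puts the term in $\tilde U^{>0}$, and if $\sum k_i=n>2r$ then some $k_i\geq 3$, forcing a factor $v_1$ or $v_2$ from $\mathcal L^+$. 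Using the $\mathfrak{sl}_3$-commutators $[v_1,v_0]=12x_+$, $[v_2,v_0]=0$, $[v_2,v_{-1}]=-4x_+$, $[v_1,v_{-1}]=-4u_0$, $[v_1,v_{-2}]=4u_{-1}$, $[v_2,v_{-2}]=2u_0$ to put each such monomial in PBW order, a short case check shows that every surviving PBW monomial retains an $\mathcal L^+$-factor on its right and therefore lies in $\tilde U^{>0}$. Since $\tilde U^{>0}$ is a left ideal, multiplication by $v_{-2,(a,b)}$ or $v_{-1,(a,b)}$ preserves this property, yielding $T_0,T_1\equiv_{>0}0$.

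The substantive reduction is of $T_3+T_4$. Using $[v_2,x_+]=0$ and $v_{1,(a,b)}\,x_{+,0,0}^{(n)} = x_{+,0,0}^{(n)}v_{1,(a,b)} - 4\,x_{+,0,0}^{(n-1)}v_{2,(a,b)}$, I would rewrite
\[
T_3+T_4 \;=\; x_{+,0,0}^{(2r-1)}\,v_{1,(a,b)}\,v_{-2,\mathbf a} \;-\; 3\,x_{+,0,0}^{(2r-2)}\,v_{2,(a,b)}\,v_{-2,\mathbf a},
\]
then push $v_{1,(a,b)}$ and $v_{2,(a,b)}$ through the chain $v_{-2,\mathbf a}$ using $[v_1,v_{-2}]=4u_{-1}$ and $[v_2,v_{-2}]=2u_0$. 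Because $[u_{-1},v_{-2}]=0$ the produced $u_{-1}$'s commute freely through the remaining $v_{-2}$'s, while each produced $u_0$ can be further absorbed into a subsequent $v_{-2}$ via $[u_0,v_{-2}]=-4v_{-2}$, yielding a merged $v_{-2}$ with combined loop index. Finally the outer power of $x_{+,0,0}$ is propagated inward by the Leibniz identity, and modulo $\tilde U^{>0}$ only the contractions $x_{+,0,0}\,u_{-1,(c,d)}\equiv u_{0,(c,d)}$, $x_{+,0,0}^{(m)}u_{-1,(c,d)}\equiv 0$ for $m\geq 2$, and $x_{+,0,0}^{(m)}u_{0,(c,d)}\equiv 0$ for $m\geq 1$ survive. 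Grouping the resulting terms by which positions of $\mathbf a$ undergo absorption, a single absorption at position $l$ accrues $(+4)+(-3)(+2)=-2$ (Part~A plus Part~B), matching the coefficient of Term~2; a double absorption at positions $l_1<l_2$ (producing the merged $v_{-2}$ with index $|\mathbf a_{(l_1,l_2)}|+(a,b)$) receives contributions both from Part~B (via $v_2\to u_0\to -4v_{-2}$) and from Part~A (via the analogous path with a second $v_1$-factor from the outer $x_+$-push paired by $[v_1,u_{-1}]=-2v_0$), combining to $+8$ as in Term~3. The main obstacle is the combinatorial bookkeeping in this last step: several parallel absorption paths contribute to the same right-hand side term, and one has to verify that their numerical coefficients collapse exactly to $-2$ and $+8$; the algebraic ingredients themselves are elementary $\mathfrak{sl}_3$-commutators.
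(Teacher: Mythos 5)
Your opening moves coincide with the paper's: the divided-power Leibniz expansion of $x_{+,0,0}^{(2r+2)}v_{-2,(a,b)*\mathbf{a}}$, discarding the $k=0,1$ terms (your $T_0,T_1$; this is true, and is cleanest via the remark that $x_{+,0,0}^{(n)}v_{-2,\mathbf{a}}$ has strictly positive $w_+$-weight for $n>2r$ and hence lies in the left ideal, rather than your PBW case check, in which a positive factor can be consumed, e.g.\ $v_1v_{-2}\mapsto 4u_{-1}$), keeping $T_2$, and the rewriting $T_3+T_4=x_{+,0,0}^{(2r-1)}v_{1,(a,b)}v_{-2,\mathbf{a}}-3v_{2,(a,b)}x_{+,0,0}^{(2r-2)}v_{-2,\mathbf{a}}$, which is exactly the step \eqref{eq: 3-4 AI2}. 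Your commutator table is correct. (Minor point: the computation takes place in $U$, not in the auxiliary algebra $\tilde{U}$ of Subsection \ref{sect: aux Lie alg}.)

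The genuine gap is the final reduction, precisely the part you flag as ``the main obstacle'': the lemma \emph{is} the assertion that the coefficients collapse to $-2$ and $+8$, so leaving that verification open leaves the statement unproved, and the bookkeeping you sketch would not deliver it. First, the count $(+4)+(-3)(+2)=-2$ ignores where the produced $u_0$ sits: Part B yields $-6\,x_{+,0,0}^{(2r-2)}v_{-2,\mathbf{a}^l}u_{0,|\mathbf{a}_l|+(a,b)}$ with $u_0$ on the right, while Part A yields $+4\,u_{0,|\mathbf{a}_l|+(a,b)}x_{+,0,0}^{(2r-2)}v_{-2,\mathbf{a}^l}$ with $u_0$ on the left, and commuting $u_0$ across $x_{+,0,0}^{(2r-2)}v_{-2,\mathbf{a}^l}$ creates exactly the merged-$v_{-2}$ and $x_+$/$u_1$-type corrections you do not account for. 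Second, your claim that modulo the ideal only the contractions $x_+u_{-1}\equiv u_0$, $x_+^{(m)}u_{-1}\equiv 0$ $(m\geq 2)$, $x_+^{(m)}u_0\equiv 0$ $(m\geq 1)$ survive is false: the leftover terms $4\sum_l x_{+,0,0}^{(2r-3)}u_{1,|\mathbf{a}_l|+(a,b)}v_{-2,\mathbf{a}^l}$ have $w_+$-weight $0$, are not in the left ideal, and are exactly what corrects the naive double-absorption coefficient $(-3)\cdot 2\cdot(-4)=+24$ down to $+8$. The paper resolves this with a device your sketch has no counterpart of: it computes $x_{+,0,0}^{(2r-2)}\sum_l v_{-2,(a_1,b_1)}\cdots u_{0,(a_l+a,b_l+b)}\cdots v_{-2,(a_r,b_r)}$ in two ways (pushing $u_0$ rightwards as in \eqref{eq: 3-1 AI2}, and leftwards and through the divided power as in \eqref{eq: 3-2 AI2}) and equates the outcomes to obtain the exact identity \eqref{eq: 3-3 AI2}, which simultaneously fixes the ordering of $u_0$ and supplies the missing $-16$. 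Your alternative Part-A mechanism for the $+8$, via $\operatorname{ad}(x_+)^{(3)}v_{-2}=v_1$ and $[v_1,u_{-1}]=-2v_0$, produces terms containing a $v_0$-factor rather than the merged $v_{-2}$ appearing on the right-hand side, so it cannot yield that coefficient without a further reduction you have not performed.
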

\begin{proof}
  We have
  \begin{align}\label{eq: x2r v-2b der}
    x_{+,0,0}^{(2r+2)} v_{-2,(a,b)*\mathbf{a}} \equiv_{>0} v_{0,(a,b)} x_{+,0,0}^{(2r)} v_{-2,\mathbf{a}} + v_{1,(a,b)} x_{+,0,0}^{(2r-1)} v_{-2,\mathbf{a}} + v_{2,(a,b)} x_{+,0,0}^{(2r-2)} v_{-2,\mathbf{a}}.
  \end{align}
  By the identities $[x_+,v_2] = 0$, $[v_2,v_{-2}] = 2u_0$, and $[u_0,v_{-2}]=-4v_{-2}$, the third term of the right-hand side is computed as follows:
  \begin{align}\label{eq: 3-1 AI2}
    \begin{split}
      v_{2,(a,b)} x_{+,0,0}^{(2r-2)} v_{-2,\mathbf{a}}
      &\equiv_{>0} x_{+,0,0}^{(2r-2)} \sum_{l=1}^r v_{-2,(a_1,b_1)} \cdots 2u_{0,(a_l+a,b_l+b)} \cdots v_{-2,(a_r,b_r)}\\
      &\equiv_{>0} 2 x_{+,0,0}^{(2r-2)}(\sum_{l=1}^r v_{-2,\mathbf{a}^l} u_{0,|\mathbf{a}_l|+(a,b)} - 4\sum_{1 \leq l_1 < l_2 \leq r} v_{-2,(|\mathbf{a}_{(l_1,l_2)}+(a,b)|)*\mathbf{a}^{(l_1,l_2)}}).
    \end{split}
  \end{align}
  On the other hand, noting that
  \begin{align*}
    x_{+,0,0}^{(r)} u_{0,(a,b)} = 2 x_{+,0,0}^{(r-1)}u_{1,(a,b)} + u_{0,(a,b)}x_{+,0,0}^{(r)},
  \end{align*}
  we obtain
  \begin{align}\label{eq: 3-2 AI2}
    \begin{split}
      &x_{+,0,0}^{(2r-2)} \sum_{l=1}^r v_{-2,(a_1,b_1)} \cdots 2u_{0,(a_l+a,b_l+b)} \cdots v_{-2,(a_r,b_r)}\\
      &\quad\equiv_{>0} 2 x_{+,0,0}^{(2r-2)}(\sum_{l=1}^r u_{0,|\mathbf{a}_l|+(a,b)} v_{-2,\mathbf{a}^l} + 4\sum_{1 \leq l_1 < l_2 \leq r} v_{-2,(|\mathbf{a}_{(l_1,l_2)}|+(a,b))*\mathbf{a}^{(l_1,l_2)}})\\
      &\quad\equiv_{>0} 2\sum_{l=1}^r(2 x_{+,0,0}^{(2r-3)} u_{1,|\mathbf{a}_l|+(a,b)} + u_{0,|\mathbf{a}_l|+(a,b)} x_{+,0,0}^{(2r-2)}) v_{-2,\mathbf{a}^l}\\
      &\quad\phantom{\equiv} + 8\sum_{1 \leq l_1 < l_2 \leq r} x_{+,0,0}^{(2r-2)} v_{-2,(|\mathbf{a}_{(l_1,l_2)}|+(a,b))*\mathbf{a}^{(l_1,l_2)}}.
    \end{split}
  \end{align}
  Equations \eqref{eq: 3-1 AI2} and \eqref{eq: 3-2 AI2} imply that
  \begin{align}\label{eq: 3-3 AI2}
    \sum_{l=1}^r x_{+,0,0}^{(2r-3)} u_{1,|\mathbf{a}_l|+(a,b)} v_{-2,\mathbf{a}^l} = -4 \sum_{1 \leq l_1 < l_2 \leq r} x_{+,0,0}^{(2r-2)} v_{-2,(|\mathbf{a}_{(l_1,l_2)}|+(a,b))*\mathbf{a}^{(l_1,l_2)}}.
  \end{align}

  Next, let us consider the second term of the right-hand side of equation \eqref{eq: x2r v-2b der}.
  Noting that,
  \begin{align}\label{eq: 3-4 AI2}
    v_{1,(a,b)} x_{+,0,0}^{(2r-1)} = x_{+,0,0}^{(2r-1)} v_{1,(a,b)} -4v_{2,(a,b)} x_{+,0,0}^{(2r-2)},
  \end{align}
  and
  \begin{align*}
    x_{+,0,0}^{(2r-1)} u_{-1,(a,b)} = x_{+,0,0}^{(2r-3)} u_{1,(a,b)} + u_{0,(a,b)} x_{+,0,0}^{(2r-2)} + u_{-1,(a,b)}x_{+,0,0}^{(2r-1)},
  \end{align*}
  we obtain
  \begin{align}\label{eq: 3-5 AI2}
    \begin{split}
      x_{+,0,0}^{(2r-1)} v_{1,(a,b)} v_{-2,\mathbf{a}}
      &\equiv_{>0} x_{+,0,0}^{(2r-1)} \sum_{l=1}^r v_{-2,(a_1,b_1)} \cdots u_{-1,(a_l+a,b_l+b)} \cdots v_{-2,(a_r,b_r)}\\
      &\equiv_{>0} 4x_{+,0,0}^{(2r-1)} \sum_{l=1}^r u_{-1,|\mathbf{a}_l|+(a,b)} v_{-2,\mathbf{a}^l}\\
      &\equiv_{>0} 4\sum_{l=1}^r (x_{+,0,0}^{(2r-3)} u_{1,|\mathbf{a}_l|+(a,b)} + u_{0,|\mathbf{a}_l|+(a,b)} x_{+,0,0}^{(2r-2)}) v_{-2,\mathbf{a}^l}.
    \end{split}
  \end{align}
  
  Finally, let us compute as follows:
  \begin{align*}
    x_{+,0,0}^{(2r+2)} v_{-2,(a,b)*\mathbf{a}}
    \overset{\eqref{eq: x2r v-2b der}}{\equiv_{>0}}
    &v_{0,(a,b)} x_{+,0,0}^{(2r)} v_{-2,\mathbf{a}} + v_{1,(a,b)} x_{+,0,0}^{(2r-1)} v_{-2,\mathbf{a}} + v_{2,(a,b)} x_{+,0,0}^{(2r-2)} v_{-2,\mathbf{a}}\\
    \overset{\eqref{eq: 3-4 AI2}}{=}
    &v_{0,(a,b)} x_{+,0,0}^{(2r)} v_{-2,\mathbf{a}} + x_{+,0,0}^{(2r-1)} v_{1,(a,b)} v_{-2,\mathbf{a}} - 3v_{2,(a,b)} x_{+,0,0}^{(2r-2)} v_{-2,\mathbf{a}}\\
    \overset{\eqref{eq: 3-1 AI2}}{\equiv_{>0}}
    &v_{0,(a,b)} x_{+,0,0}^{(2r)} v_{-2,\mathbf{a}} + x_{+,0,0}^{(2r-1)} v_{1,(a,b)} v_{-2,\mathbf{a}}\\
    \phantom{\equiv_{>0}}
    &-6\sum_{l=1}^r x_{+,0,0}^{(2r-2)} v_{-2,\mathbf{a}^l} u_{0,|\mathbf{a}_l|+(a,b)}\\
    \phantom{\equiv_{>0}}
    &+ 24\sum_{1 \leq l_1 < l_2 \leq r} x_{+,0,0}^{(2r-2)} v_{-2,(|\mathbf{a}_{(l_1,l_2)}+(a,b)|)*\mathbf{a}^{(l_1,l_2)}}\\
    \overset{\eqref{eq: 3-3 AI2}, \eqref{eq: 3-5 AI2}}{\equiv_{>0}}
    &v_{0,(a,b)} x_{+,0,0}^{(2r)} v_{-2,\mathbf{a}}\\
    \phantom{\equiv_{>0}}
    &-2\sum_{l=1}^r x_{+,0,0}^{(2r-2)} v_{-2,\mathbf{a}^l} u_{0,|\mathbf{a}_l|+(a,b)}\\
    \phantom{\equiv_{>0}}
    &+ 8\sum_{1 \leq l_1 < l_2 \leq r} x_{+,0,0}^{(2r-2)} v_{-2,(|\mathbf{a}_{(l_1,l_2)}+(a,b)|)*\mathbf{a}^{(l_1,l_2)}}.
  \end{align*}
  Thus, we complete the proof.
\end{proof}

Set $B := \mathbb{Z}_{\geq 0} \times \mathbb{Z}_{> 0,\mathrm{odd}}$.

\begin{lem}\label{lem: x2r+2 v-2b equiv dbfa AI2}
  For each $r \in \mathbb{Z}_{\geq 0}$ and $\mathbf{a} = (a_1,\dots,a_r) \in B^r$, we have
  \[
    2^r x_{+,0,0}^{(2r)} Y_{(a_1,b_1)} \cdots Y_{(a_r,b_r)} \equiv_{>0} d_\mathbf{a}.
  \]
  and
  \[
    d_{\tau \mathbf{a}} = d_\mathbf{a} \ \text{ for all } \tau \in \mathfrak{S}_r.
  \]
\end{lem}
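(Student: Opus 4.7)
The plan is to prove both assertions simultaneously by strong induction on $r$. The base case $r = 0$ is immediate: the equivalence reduces to $1 \equiv_{>0} 1$, and $\mathfrak{S}_0$ is trivial. The case $r = 1$ follows at once from Lemma~\ref{lem: x2r+2 v-2b AI2} (whose sums vanish), once one identifies $v_{-2,(a,b)} = 4 Y_{(a,b)}$ and $v_{0,(a,b)} = -2w_{-,a,b} = 2 d_{(a,b)}$, the latter using that $b$ is odd.

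For the inductive step, fix $(a,b) \in B$ and $\mathbf{a} = ((a_1,b_1),\dots,(a_r,b_r)) \in B^r$, and assume both assertions for every word in $\bigsqcup_{k \leq r} B^k$. Apply Lemma~\ref{lem: x2r+2 v-2b AI2} to $(a,b)*\mathbf{a}$. Using $v_{0,(a,b)} = 2 d_{(a,b)}$ (valid because $b$ is odd) and $u_{0,|\mathbf{a}_l|+(a,b)} = 2 d_{|\mathbf{a}_l|+(a,b)}$ (valid because $b + b_l$ is even), and invoking the inductive equivalence to replace each factor $x_{+,0,0}^{(2k)} v_{-2,\mathbf{c}}$ with $2^k d_{\mathbf{c}}$, the right-hand side of Lemma~\ref{lem: x2r+2 v-2b AI2} becomes $2^{r+1}$ times
\[
  d_{(a,b)} d_{\mathbf{a}} - \sum_{l=1}^r d_{\mathbf{a}^l} d_{|\mathbf{a}_l|+(a,b)} + 2 \sum_{1 \leq l_1 < l_2 \leq r} d_{(|\mathbf{a}_{(l_1,l_2)}|+(a,b))*\mathbf{a}^{(l_1,l_2)}}.
\]
The substitution is legitimate even when $x_{+,0,0}^{(2k)} v_{-2,\mathbf{c}}$ appears to the left of a Cartan element (as in the middle sum), since the left ideal $U^\mathrm{tw} \mathcal{L}^{\mathrm{tw},+}$ is stable under right multiplication by $U^{\mathrm{tw},0}$; this follows from $[\mathcal{L}^{\mathrm{tw},0},\mathcal{L}^{\mathrm{tw},+}] \subseteq \mathcal{L}^{\mathrm{tw},+}$. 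Thus the equivalence at length $r+1$ reduces to proving the combinatorial identity that $d_{(a,b)*\mathbf{a}}$ equals the displayed expression.

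To establish that identity, I would expand the defining recursion $d_{(a,b)*\mathbf{a}} = d_{(a,b)} d_{\mathbf{a}} - \sum_l d_{\mathbf{a}[l \to (a+a_l,b+b_l)]}$. By the inductive symmetry at length $r$, each word $\mathbf{a}[l \to (a+a_l,b+b_l)]$ may be reordered as $(a+a_l,b+b_l)*\mathbf{a}^l$; applying the recursion again and the inductive symmetry at length $r-1$, the interior terms become $d_{(|\mathbf{a}_{(l_1,l_2)}|+(a,b))*\mathbf{a}^{(l_1,l_2)}}$ with $\{l_1,l_2\} = \{l,j\}$. Grouping the resulting double sum over ordered pairs $(l,j)$ with $l \neq j$ by their unordered class produces exactly the factor of $2$ in the final sum, matching the identity.

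Finally, the symmetry $d_{\tau \mathbf{a}} = d_{\mathbf{a}}$ at length $r+1$ follows for free from the equivalence just proved: the element $Y_{(a,b)} Y_{(a_1,b_1)} \cdots Y_{(a_r,b_r)}$ is manifestly $\mathfrak{S}_{r+1}$-invariant, because $[Y,Y] = 0$ forces the $Y_{(c)}$ to pairwise commute in $\mathcal{L}$. Hence $d_{\tau \mathbf{a}} \equiv_{>0} d_{\mathbf{a}}$ for every $\tau$; since both elements lie in $U^{\mathrm{tw},0}$ and the triangular decomposition \eqref{eq: tri decomp Utw} gives $U^{\mathrm{tw},0} \cap U^\mathrm{tw} \mathcal{L}^{\mathrm{tw},+} = 0$, the congruence lifts to equality. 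The main obstacle will be the combinatorial identity: keeping track of signs, of the doubling that arises from passing to unordered pairs, and of the correct invocation of the inductive symmetry at two different lengths $r$ and $r-1$ all have to align.
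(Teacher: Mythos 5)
Your overall route is the intended one: induction on $r$ through Lemma \ref{lem: x2r+2 v-2b AI2}, with the identifications $v_{-2,(a,b)}=4Y_{(a,b)}$, $v_{0,(a,b)}=2d_{(a,b)}$ (for $b$ odd) and $u_{0,|\mathbf{a}_l|+(a,b)}=2d_{|\mathbf{a}_l|+(a,b)}$ (for $b+b_l$ even), and the factor $2^{r+1}$ is tracked correctly; the remark that the left ideal is stable under right multiplication by Cartan loop elements, and the final lift of $d_{\tau\mathbf{a}}\equiv_{>0}d_{\mathbf{a}}$ to an equality via the triangular decomposition, are both sound (only note that the congruence of Lemma \ref{lem: x2r+2 v-2b AI2} lives in $U$, so one should quote $U^{0}\cap U\mathcal{L}^{+}=0$ rather than its $U^{\mathrm{tw}}$ analogue).

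There is, however, a genuine gap in the combinatorial identity. You rewrite $d$ of the word obtained from $\mathbf{a}$ by replacing its $l$-th letter with $(a_l+a,b_l+b)$ as $d_{(a_l+a,b_l+b)*\mathbf{a}^l}$, citing ``the inductive symmetry at length $r$''. But that word does not lie in $B^r$: its $l$-th letter has second component $b_l+b$, which is even, whereas $B=\mathbb{Z}_{\geq 0}\times\mathbb{Z}_{>0,\mathrm{odd}}$, and your induction hypothesis asserts $d_{\tau\mathbf{b}}=d_{\mathbf{b}}$ only for $\mathbf{b}\in B^k$, $k\leq r$. Worse, your only mechanism for producing symmetry (commuting $Y$'s plus the congruence) cannot be extended to such mixed-parity words: for $b$ even, $Y_{a,b}\notin\mathcal{L}^{\mathrm{tw}}$, and Lemma \ref{lem: x2r+2 v-2b AI2} with $r=0$ gives $x_{+,0,0}^{(2)}v_{-2,(a,b)}\equiv_{>0}-2w_{-,a,b}$, while $d_{(a,b)}=w_{+,a,b}$, so no analogous congruence is available there. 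The missing statement is true but needs its own (elementary) argument: show by induction on length, using only the defining recursion $d_{(a,b)*\mathbf{a}}=d_{(a,b)}d_{\mathbf{a}}-\sum_l d_{((a_1,b_1),\dots,(a_l+a,b_l+b),\dots,(a_r,b_r))}$ and the commutativity of $U^{\mathrm{tw},0}$, that $d_{\tau\mathbf{b}}=d_{\mathbf{b}}$ for \emph{every} $\mathbf{b}\in A^*$ (equivalently, prove directly by induction the identity $\sum_l d_{((a_1,b_1),\dots,(a_l+a,b_l+b),\dots,(a_r,b_r))}=\sum_l d_{\mathbf{a}^l}d_{|\mathbf{a}_l|+(a,b)}-2\sum_{l_1<l_2}d_{(|\mathbf{a}_{(l_1,l_2)}|+(a,b))*\mathbf{a}^{(l_1,l_2)}}$). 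With that supplied your proof closes; note also that your second appeal, to symmetry at length $r-1$, is legitimate only because $b_{l_1}+b_{l_2}+b$ is odd, so those words do lie in $B^{r-1}$ --- this parity check should be made explicit.
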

\begin{proof}
  The assertions can be straightforwardly deduced by Lemma \ref{lem: x2r+2 v-2b AI2} and induction on $r$.
\end{proof}

\begin{lem}\label{lem: deduction from A1}
  Let $\phi \in (\mathcal{L}^{\mathrm{tw},0})^*$ be such that the highest weight module $V(\phi)$ is finite-dimensional.
  Set
  \[
    n := \max\{ r \geq 0 \mid y_{+,0,0}^{(r)} v_\phi \neq 0 \}.
  \]
  Then, there exist $\beta_1,\dots,\beta_n \in \mathbb{C}$ such that
  \[
    \phi(2w_{+,a,0}) = \sum_{i=1}^n \beta_i^a \ \text{ for all } a \in \mathbb{Z}_{\geq 0}.
  \]
\end{lem}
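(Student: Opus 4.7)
The plan is to reduce the claim to the type-$A_1$ classification (Theorem \ref{thm: classification A1}) by carving out a suitable $\mathfrak{sl}_2$-current subalgebra of $\mathcal{L}^{\mathrm{tw}}$. The natural triple $(x_+, y_+, w_+)=(e_1+e_2,\ f_1+f_2,\ h_1+h_2)$ spanning $\mathfrak{k}$ is \emph{not} a Chevalley $\mathfrak{sl}_2$-triple: using the $A_2$ Cartan integers $a_{12}=a_{21}=-1$, one computes $[x_+, y_+]=w_+$ but $[w_+, x_+]=x_+$ and $[w_+, y_+]=-y_+$, missing the factor $\pm 2$. The rescaled triple $(E, F, H):=(\sqrt{2}\,x_+,\ \sqrt{2}\,y_+,\ 2 w_+)$ does satisfy the Chevalley relations, and this is precisely why the factor $2$ on $w_{+,a,0}$ appears in the statement.

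Let $\mathcal{M} \subset \mathcal{L}^{\mathrm{tw}}$ be the subalgebra spanned by $E_a := \sqrt{2}\, x_{+,a,0}$, $F_a := \sqrt{2}\, y_{+,a,0}$, $H_a := 2 w_{+,a,0}$ for $a \in \mathbb{Z}_{\ge 0}$. A direct computation gives $[E_a, F_b]=H_{a+b}$, $[H_a, E_b]=2E_{a+b}$, $[H_a, F_b]=-2F_{a+b}$ with the remaining brackets zero, so $\mathcal{M}$ is isomorphic to the twisted loop algebra of the second kind of type $A_1$ from the previous subsection, with multiplication table \eqref{eq: mul table A1}. Since $x_+ \in \mathfrak{k}^+$, $y_+ \in \mathfrak{k}^-$, $w_+ \in \mathfrak{k}^0$, we have $E_a \in \mathcal{L}^{\mathrm{tw},+}$, $F_a \in \mathcal{L}^{\mathrm{tw},-}$, $H_a \in \mathcal{L}^{\mathrm{tw},0}$, so $v_\phi$ is an $\mathcal{M}$-highest weight vector of some weight $\phi' \in (\mathcal{M}^0)^*$ satisfying $\phi'(H_a) = \phi(2 w_{+,a,0})$.

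The submodule $W := U(\mathcal{M}) v_\phi \subseteq V(\phi)$ is finite-dimensional, hence so is its unique simple quotient, which is (up to isomorphism) the simple highest weight $\mathcal{M}$-module of weight $\phi'$. Applying Theorem \ref{thm: classification A1} to $\phi'$ therefore yields $m \ge 0$ and $\alpha_1, \dots, \alpha_m \in \mathbb{C}^\times$ with $\phi'(H_a)=\sum_{i=1}^m (\alpha_i+\alpha_i^{-1})^a$ for all $a \ge 0$. Inspecting the proof of Theorem \ref{thm: classification A1}, the integer $m$ coincides with $\max\{r : F_0^{(r)} v_\phi \ne 0\}$; since $F_0^{(r)} = (\sqrt{2})^r\, y_{+,0,0}^{(r)}$, this maximum equals our $n$. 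Setting $\beta_i := \alpha_i + \alpha_i^{-1}$ yields the claim. The only point requiring care is pinning down the rescaling factors $\sqrt{2}$ and $2$ so that the commutation relations of $\mathcal{M}$ exactly match the type-$A_1$ table; once that is done, the reduction is transparent and no new representation-theoretic input is needed.
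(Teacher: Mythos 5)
Your proof is correct and follows essentially the same route as the paper, which simply rescales to the $\mathfrak{sl}_2$-triple $(x_+,2y_+,2w_+)$ (your symmetric choice $(\sqrt{2}x_+,\sqrt{2}y_+,2w_+)$ is an equivalent normalization) and invokes Theorem \ref{thm: classification A1}; you merely spell out the intermediate steps (the current-subalgebra $\mathcal{M}\simeq L_+\otimes\mathfrak{sl}_2$, the highest-weight property of $v_\phi$, and the identification of $n$ with the number of parameters) that the paper leaves implicit.
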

\begin{proof}
  It is easily verified that the elements $x_+,2y_+,2w_+$ forms an $\mathfrak{sl}_2$-triple.
  Then, the assertion follows from Theorem \ref{thm: classification A1}.
\end{proof}

\begin{lem}\label{lem: deduction from AI1}
  Let $\phi \in (\mathcal{L}^{\mathrm{tw},0})^*$ be such that the highest weight module $V(\phi)$ is finite-dimensional.
  Set
  \[
    n := \max\{ r \geq 0 \mid Y_{0,1}^{(r)} v_\phi \neq 0 \}.
  \]
  Then, the following hold$:$
  \begin{enumerate}
    \item There exist $\nu_1,\nu_{-1} \in \mathbb{C}$ and $\beta_1,\dots,\beta_n \in \mathbb{C}^\times \setminus \{ \pm 2 \}$ such that
    \[
      \phi(w_{+,a,0}) = \nu_1 \cdot 2^a + \nu_{-1} \cdot (-2)^a + \sum_{i=1}^n (\beta_i+\beta_i^{-1})^a \ \text{ for all } a \in \mathbb{Z}_{\geq 0}.
    \]
    \item $\phi(d_\mathbf{a}) = 0$ for all $r > n$ and $\mathbf{a} \in B^r$.
  \end{enumerate}
\end{lem}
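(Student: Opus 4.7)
The plan is to introduce an Onsager sub-algebra $\mathfrak{O}$ of type $A\mathrm{I}_1$ inside $\mathcal{L}^{\mathrm{tw}}$, apply Theorem \ref{thm: classification AI1} for part (1), and use Lemma \ref{lem: x2r+2 v-2b equiv dbfa AI2} for part (2). Concretely, $(X, Y, w_+)$ is a standard $\mathfrak{sl}_2$-triple in $\mathfrak{g} = \mathfrak{sl}_3$ (as $[X, Y] = w_+$, $[w_+, X] = 2X$, $[w_+, Y] = -2Y$), and the restriction of $\theta$ to $\mathfrak{s} := \langle X, Y, w_+ \rangle$ is the $A\mathrm{I}_1$-involution (fixing $w_+$, negating $X$ and $Y$). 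Consequently, $\mathfrak{O} := \mathcal{L}^{\mathrm{tw}}(\mathfrak{s}, \theta|_\mathfrak{s})$ embeds into $\mathcal{L}^{\mathrm{tw}}$ as the sub-Lie-algebra spanned by $\{X_{a,1}, Y_{a,1}, w_{+,a,0} \mid a \geq 0\}$.

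For part (1), I would observe that $X = [e_2, e_1]$ lies in $\mathfrak{n}^+$, so $X_{a,1} \in \mathcal{L}^{\mathrm{tw}, +}$ annihilates $v_\phi$; thus $v_\phi$ is an $\mathfrak{O}$-highest weight vector of highest weight $\phi|_{\mathfrak{O}^0}$, and its $\mathfrak{O}$-submodule $\mathfrak{O} v_\phi \subseteq V(\phi)$ is finite-dimensional. Its simple quotient is therefore also finite-dimensional, and Theorem \ref{thm: classification AI1} applied to $\phi|_{\mathfrak{O}^0}$ yields the desired expression for $\phi(w_{+, a, 0})$, with the $\beta_i$'s taken from the $\alpha_i$'s produced by that theorem (the exclusion conditions matching up after the natural identification).

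For part (2), Lemma \ref{lem: x2r+2 v-2b equiv dbfa AI2} gives $\phi(d_\mathbf{a}) v_\phi = 2^r x_{+,0,0}^{(2r)} Y_\mathbf{a} v_\phi$. For the special tuple $\mathbf{a} = ((0,1), \dots, (0,1)) \in B^r$, the right-hand side equals $2^r \cdot r! \cdot x_{+,0,0}^{(2r)} Y_{0,1}^{(r)} v_\phi$, which vanishes for $r > n$ since $Y_{0,1}^{(r)} v_\phi = 0$ by the definition of $n$. To handle a general $\mathbf{a} \in B^r$, I would express each $Y_{(a_i, b_i)}$ as a $\mathbb{C}$-linear combination of $Y_{(a', 1)}$'s via $t_-^{b_i} = t_- (t_+^2 - 4)^{(b_i - 1)/2}$, reducing to products of commuting elements $Y_{(a_j', 1)} \in \mathfrak{O}^-$, and then invoke the Onsager analogue of Proposition \ref{prop: properties of dbfa AI1}(3) in $\mathfrak{O}$ to rewrite such a product modulo $\mathfrak{O}^+$ in a form where $Y_{0,1}^{(r)}$ stands on the right. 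The main obstacle lies precisely in this extension: the $A\mathrm{I}_2$-level $d_\mathbf{a}$'s differ from their $\mathfrak{O}$-level analogues, and the prefactor $x_{+,0,0}$ does not lie in $\mathfrak{O}$, so one must carefully control the interplay between the Onsager-subalgebra identities and the surrounding $\mathcal{L}^{\mathrm{tw}}$-structure.
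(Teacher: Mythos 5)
Your part (1) is essentially the paper's own argument: the paper also passes to the subalgebra attached to the $\mathfrak{sl}_2$-triple $(X,Y,w_+)$, on which $\theta$ restricts to the $A\mathrm{I}_1$ involution, and quotes Theorem \ref{thm: classification AI1} exactly as in Lemma \ref{lem: deduction from A1} (to pin the number of summands to the specific $n$ you should run the proof of that theorem on the $\mathfrak{O}$-highest weight module generated by $v_\phi$ rather than only its statement, but this is the same level of detail as the paper).

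Part (2), however, has a genuine gap, which you yourself flag as the ``main obstacle,'' and the route you sketch cannot close it. The reduction $t_-^{b}=t_-(t_+^2-4)^{(b-1)/2}$ does write each $Y_{a,b}$, $(a,b)\in B$, as a combination of elements $Y_{a',1}\in\mathfrak{O}^-$, but there is no ``Onsager analogue of Proposition \ref{prop: properties of dbfa AI1}(3)'' that rewrites a product $Y_{(a_1,1)}\cdots Y_{(a_r,1)}$ with $Y_{0,1}^{(r)}$ on the right: that proposition rewrites the Cartan elements $d_\mathbf{a}$ as (raising operators)$\,\times\, y_{0,1}^{(r)}$ modulo the positive ideal, whereas your expression is already a product of lowering operators. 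Since $\mathfrak{O}^-$ is abelian, by PBW the monomials $Y_{(a_1,1)}\cdots Y_{(a_r,1)}$ stay linearly independent modulo the left ideal generated by the positive part, so no commutator manipulation inside $U(\mathfrak{O})$ (or $U^\mathrm{tw}$) turns them into multiples of $Y_{0,1}^{(r)}$; in particular $Y_\mathbf{a}v_\phi=0$ for $r>n$ is not a formal consequence of $Y_{0,1}^{(n+1)}v_\phi=0$. The missing ingredient is representation-theoretic, and it is exactly what the paper uses: by part (1) the $\mathfrak{O}$-highest weight of $v_\phi$ has the evaluation form, so the easy direction of Theorem \ref{thm: classification AI1} realizes the corresponding highest weight module via the tensor product $V_\mathfrak{k}(\nu_1)_1\otimes V_\mathfrak{k}(\nu_{-1})_{-1}\otimes V(1)_{\alpha_1}\otimes\cdots\otimes V(1)_{\alpha_n}$, on which \emph{every} product of more than $n$ operators $Y_{a,b}$, $(a,b)\in B$, acts as zero, because evaluation at $\pm1$ kills $t_-$ and each remaining factor is two-dimensional, hence absorbs at most one $Y$. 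Feeding this vanishing (for the vectors $Y_\mathbf{a}v_\phi$ relevant to $v_\phi$ inside $V(\phi)$) into Lemma \ref{lem: x2r+2 v-2b equiv dbfa AI2}, i.e.\ into your identity $\phi(d_\mathbf{a})v_\phi=2^r x_{+,0,0}^{(2r)}Y_{(a_1,b_1)}\cdots Y_{(a_r,b_r)}v_\phi$, gives $\phi(d_\mathbf{a})=0$ for all $r>n$. So your special case $\mathbf{a}=((0,1),\dots,(0,1))$ is fine, but for general $\mathbf{a}$ you must invoke the tensor-product-of-evaluation-modules realization; that, not further commutator identities, is the step your proposal is missing.
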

\begin{proof}
  The first assertion follows from Theorem \ref{thm: classification AI1} in a similar way to the proof of Lemma \ref{lem: deduction from A1}.
  For the second assertion, observe that Theorem \ref{thm: classification AI1} states that $V(\phi)$ is a quotient of a module on which the product $Y_{a_1,b_1} \cdots Y_{a_r,b_r}$ acts as zero for all $r > n$ and $((a_1,b_1),\dots,(a_r,b_r)) \in B^r$.
  This, together with Lemma \ref{lem: x2r+2 v-2b equiv dbfa AI2}, proves the assertion.
\end{proof}

\begin{thm}\label{thm: classification AI2}
  For each $\phi \in (\mathcal{L}^{\mathrm{tw},0})^*$, the following are equivalent$:$
  \begin{enumerate}
    \item $\dim V(\phi) < \infty$.
    \item There exist $\nu_1,\nu_{-1} \in \frac{1}{2}\mathbb{Z}_{\geq 0}$, $n \in \mathbb{Z}_{\geq 0}$, and $\alpha_1,\dots,\alpha_r \in \mathbb{C}^\times \setminus \{ \pm 1 \}$ such that
    \begin{align*}
      &\phi(w_{+,a,0}) = \nu_1 \cdot 2^a + \nu_{-1} \cdot (-2)^a + \sum_{i=1}^n (\alpha_i+\alpha_i^{-1})^a,\\
      &\phi(w_{-,a,1}) = \sum_{i=1}^n (\alpha_i+\alpha_i^{-1})^a (\alpha_i-\alpha_i^{-1}).
    \end{align*}
  \end{enumerate}
\end{thm}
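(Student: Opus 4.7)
For $(2) \Rightarrow (1)$, I would form the tensor product
\[
V := V_\mathfrak{k}(\nu_1)_1 \otimes V_\mathfrak{k}(\nu_{-1})_{-1} \otimes V(\varpi_1)_{\alpha_1} \otimes \cdots \otimes V(\varpi_1)_{\alpha_n},
\]
where $V(\varpi_1)$ is the standard three-dimensional $\mathfrak{sl}_3$-representation and $V_\mathfrak{k}(\nu_j)$ is the $(2\nu_j+1)$-dimensional simple $\mathfrak{so}_3$-module, finite-dimensional precisely because $\nu_j \in \frac{1}{2}\mathbb{Z}_{\geq 0}$. A direct evaluation---using that $L_-$-elements vanish under $\mathrm{ev}_{\pm 1}$ and that $\langle w_\pm, \varpi_1 \rangle = 1$---shows that the cyclic submodule generated by the tensor of highest weight vectors is a highest weight module of highest weight $\phi$, so $V(\phi)$ is its simple quotient and finite-dimensional.

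For $(1) \Rightarrow (2)$, assume $\dim V(\phi) < \infty$. First, Lemma \ref{lem: deduction from AI1} applied to the AI1 Onsager subalgebra $\langle X_{a,1}, Y_{a,1}, w_{+,a,0}\rangle_{a\geq 0}$ (isomorphic to the Onsager algebra via $X \leftrightarrow e$, $Y \leftrightarrow f$, $w_+ \leftrightarrow h$) produces $\tilde\nu_1, \tilde\nu_{-1} \in \mathbb{C}$ and $\alpha_1, \dots, \alpha_n \in \mathbb{C}^\times \setminus \{\pm 1\}$ realizing the desired expression for $\phi(w_{+,a,0})$. In parallel, Lemma \ref{lem: deduction from A1} applied to the A1 current subalgebra generated by the $\mathfrak{sl}_2$-triple $(x_+, 2y_+, 2w_+)$ yields $\phi(2 w_{+,a,0}) = \sum_j \beta_j^a$ with $\beta_j \in \mathbb{C}$. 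Doubling the first identity and invoking Proposition \ref{prop: coincidence of p'}, the multiplicity of $\pm 2$ in $\{\beta_j\}$ equals $2\tilde\nu_{\pm 1}$, a non-negative integer, so $\tilde\nu_{\pm 1} \in \frac{1}{2}\mathbb{Z}_{\geq 0}$.

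Second, to derive the $w_-$-formula with the \emph{same} $\alpha_i$'s, I would follow the template of the AI1 proof, first establishing an AI2-analogue of Proposition \ref{prop: properties of dbfa AI1} for the elements $d_\mathbf{a}$ defined in this subsection. Setting $c_\mathbf{a} := \phi(d_{T(\mathbf{a})})$ with $T(a,b) := (a, 2(a+b)-1)$ mapping $(A\setminus\{(0,0)\})^*$ into $B^*$, the vanishing $\phi(d_\mathbf{a}) = 0$ for $\mathbf{a} \in B^r$ with $r > n$ (from Lemma \ref{lem: deduction from AI1} together with Lemma \ref{lem: x2r+2 v-2b equiv dbfa AI2}) combined with the AI2 multiplicative recursion verifies the hypotheses of Theorem \ref{thm: sufficient condition for evaluation2} and produces points $(\delta_j, \gamma_j) \in \mathbb{C}^2$ with $c_{(a,b)} = \sum_j \delta_j^a \gamma_j^b$. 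Computing $\phi(w_{-, 2k, 4k+2l-1})$ in two ways via Lemma \ref{lem: binom formula} applied to $w_-$, then using Proposition \ref{prop: coincidence of p'}, yields $\delta_j^2 = \gamma_j^2(\gamma_j + 4)$; setting $\tilde\beta_j := \delta_j/\gamma_j$ gives $\tilde\beta_j^2 = \gamma_j + 4$. Matching each $\tilde\beta_j$ with some $\alpha_{\sigma(j)} + \alpha_{\sigma(j)}^{-1}$ from the first stage and unwinding gives $\phi(w_{-,a,1}) = \sum_i (\alpha_i + \alpha_i^{-1})^a (\alpha_i - \alpha_i^{-1})$.

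The main obstacle will be establishing the AI2-analogue of Proposition \ref{prop: properties of dbfa AI1}: in this setting the $d_\mathbf{a}$'s have both $w_+$- and $w_-$-components whose interaction through the defining recursion must be carefully tracked across the two parities of the second index. A subsequent technical point is reconciling the multiset $\{\tilde\beta_j\}$ with $\{\alpha_i + \alpha_i^{-1}\}$ so the same indexing $\{\alpha_i\}_{i=1}^n$ governs both formulas, which should follow from Proposition \ref{prop: coincidence of p'} together with the exclusion $\alpha_i \neq \pm 1$.
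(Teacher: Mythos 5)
Your construction for $(2)\Rightarrow(1)$ and the first stage of $(1)\Rightarrow(2)$ (extracting the $w_+$-formula from Lemma \ref{lem: deduction from AI1}, and the half-integrality of $\nu_{\pm1}$ by doubling, comparing with Lemma \ref{lem: deduction from A1} and invoking Proposition \ref{prop: coincidence of p'}) coincide with the paper's argument. The gap lies in the second stage, where you transplant the type $A\mathrm{I}_1$ substitution $T(a,b)=(a,2(a+b)-1)$ into type $A\mathrm{I}_2$. That substitution is compatible with the $A\mathrm{I}_1$ recursion only because there $d_{(a,b)}=w_{a,b+1}$ carries a built-in shift of the second index, so that merging two letters produces exactly the extra $+1$ needed to land back in the image of $T$. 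In type $A\mathrm{I}_2$ the elements $d_\mathbf{a}$ are defined by the strictly additive recursion $d_{(a,b)*\mathbf{a}}=d_{(a,b)}d_\mathbf{a}-\sum_{l}d_{((a_1,b_1),\dots,(a_l+a,b_l+b),\dots,(a_r,b_r))}$, with no shift. Under your $T$, the merged letter the recursion produces is $T(a_l,b_l)+T(a,b)=(a_l+a,\,2(a_l+a+b_l+b)-2)$, a letter of \emph{even} second index (whose $d$ is built from $w_+$), whereas your claimed relation needs $T(a_l+a,b_l+b)=(a_l+a,\,2(a_l+a+b_l+b)-1)$ (whose $d$ is built from $-w_-$). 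These differ, so the family $c_\mathbf{a}:=\phi(d_{T(\mathbf{a})})$ does not satisfy the product relation $c_ac_\mathbf{a}=c_{a*\mathbf{a}}+\sum_l c_{(a_1,\dots,a_l+a,\dots,a_r)}$, and Theorem \ref{thm: sufficient condition for evaluation2} cannot be applied as you propose.

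The paper repairs this by using the genuinely additive substitution $(a,b)\mapsto(a,a+b)$, which is compatible with the recursion; but its image contains letters of even second index, so one must first upgrade the vanishing $\phi(d_\mathbf{a})=0$ for $r>r_1$ from $\mathbf{a}\in B^r$ to all $\mathbf{a}\in(\mathbb{Z}_{\geq0}\times\mathbb{Z}_{>0})^r$. This is a separate induction (multiplying by letters $(a,b)$ with $b$ even, and handling words padded with copies of $(0,1)$) that is entirely absent from your proposal; your choice of $T$ landing in $B^*$ was presumably meant to avoid it, but that is precisely what breaks the multiplicativity. Two further consequences of the transplant: with the correct substitution the relation is $\delta_i^2=(\gamma_i^2+4)\gamma_i^2$ and one matches $\gamma_i^2=(\beta''_i)^2-4$ against $\phi(w_{+,0,2a})$, not your $\delta_j^2=\gamma_j^2(\gamma_j+4)$, so your unwinding would not yield the stated $w_-$-formula; and the ``AI2-analogue of Proposition \ref{prop: properties of dbfa AI1}'' you identify as the main obstacle is not the issue, since here the recursion is the \emph{definition} of $d_\mathbf{a}$ and the real inputs are Lemmas \ref{lem: x2r+2 v-2b AI2} and \ref{lem: x2r+2 v-2b equiv dbfa AI2}, which you already cite.
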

\begin{proof}
  First, assume the condition $(2)$, and consider the tensor product module
  \[
    V := V_\mathfrak{k}(\nu_1)_1 \otimes V_\mathfrak{k}(\nu_{-1})_{-1} \otimes V(1)_{\alpha_1} \otimes \cdots \otimes V(1)_{\alpha_n}.
  \]
  Then, the submodule of $V$ generated by $v_{\nu_1} \otimes v_{\nu_{-1}} \otimes v_1 \otimes \dots \otimes v_1$ is a highest weight module of highest weight $\phi$.
  This implies that $V(\phi)$ is a quotient of $V$, and hence, finite-dimensional.

  Next, assume the condition $(1)$.
  Set
  \[
    r_0 := \max\{ r \geq 0 \mid y_{+,0,0}^{(r)} v_\phi \neq 0 \}.
  \]
  By Lemma \ref{lem: deduction from A1}, there exist $\beta'_1,\dots,\beta'_{r_0} \in \mathbb{C}$ such that
  \begin{align}\label{eq: phi(2w+a0) from A1}
    \phi(2w_{+,a,0}) = \sum_{i=1}^{r_0} (\beta'_i)^a \ \text{ for all } a \in \mathbb{Z}_{\geq 0}.
  \end{align}

  Set
  \[
    r_1 := \max\{ r \geq 0 \mid Y_{0,1}^{(r)} v_\phi \neq 0 \}.
  \]
  By Lemma \ref{lem: deduction from AI1}, there exist $\nu_{\pm 1} \in \mathbb{C}$ and $\beta''_1,\dots,\beta''_{r_1} \in \mathbb{C}^\times \setminus \{ \pm 2 \}$ such that
  \begin{align}\label{eq: phi(w+a0) from AI1}
    \phi(w_{+,a,0}) = \nu_1 \cdot 2^a + \nu_{-1} \cdot (-2)^a + \sum_{i=1}^{r_1} (\beta''_i)^a \ \text{ for all } a \in \mathbb{Z}_{\geq 0}.
  \end{align}

  Equations \eqref{eq: phi(2w+a0) from A1} and \eqref{eq: phi(w+a0) from AI1} imply that
  \[
    p'_k(\beta'_1,\dots,\beta'_{r_0};\beta'_1,\dots,\beta'_{r_0}) = p'_k(2,-2,\beta''_1,\dots,\beta''_{r_1};4\nu_1,-4\nu_{-1},2\beta''_1,\dots,2\beta''_{r_1}) \ \text{ for all } k \geq 1.
  \]
  By Proposition \ref{prop: coincidence of p'}, we obtain
  \begin{align*}
    &\pm 2 \cdot \sharp\{ i \mid \beta'_i = \pm 2 \} = \pm 4\nu_{\pm 1},\\
    &\epsilon \cdot \sharp\{ i \mid \beta'_i = \epsilon \} = 2\epsilon \cdot \sharp\{ i \mid \beta''_i = \epsilon \} \ \text{ for all } \epsilon \in \mathbb{C}^\times \setminus \{ \pm 2 \}.
  \end{align*}
  In particular,
  \[
    \nu_{\pm 1} \in \frac{1}{2}\mathbb{Z}_{\geq 0}.
  \]

  By Lemma \ref{lem: deduction from AI1}, we have
  \begin{align}\label{eq: phi(dbfa) = 0 AI2}
    \phi(d_\mathbf{a}) = 0 \ \text{ for all } r > r_1,\ \mathbf{a} \in B^r.
  \end{align}
  We will show that
  \[
    \phi(d_\mathbf{a}) = 0 \ \text{ for all } r > r_1,\ \mathbf{a} \in (\mathbb{Z}_{\geq 0} \times \mathbb{Z}_{> 0})^r.
  \]
  Let $r > r_1$ and $s \geq 0$ be such that $\phi(d_\mathbf{a}) = 0$ for all $\mathbf{a} = ((a_1,b_1),\dots,(a_r,b_r))$ with $\sharp\{i \mid \text{$b_i$ is even }\} \leq s$.
  Then, for each $(a,b) \in \mathbb{Z}_{\geq 0} \times \mathbb{Z}_{> 0,\mathrm{even}}$, we have
  \[
    \phi(d_{(a,b)*\mathbf{a}}) = \phi(d_{(a,b)}) \phi(d_\mathbf{a}) - \sum_{l=1}^r \phi(d_{((a_1,b_1),\dots,(a_l+a,b_l+b),\dots,(a_r,b_r))}) = 0.
  \]
  Hence, $\phi(d_\mathbf{b}) = 0$ for all $\mathbf{b} = ((a_1,b_1),\dots,(a_{r+1},b_{r+1}))$ with $\sharp\{i \mid \text{$b_i$ is even }\} \leq s+1$.
  Also, let $q \geq 0$ be such that $\phi(d_\mathbf{a}) = 0$ for all $\mathbf{a} = ((a_1,b_1),\dots,(a_r,b_r))$ with
  \[
    \sharp\{i \mid \text{$b_i$ is even }\} \leq s+1 \text{ and } \sharp\{ i \mid (a_i,b_i) = (0,1) \} = r-q > 0.
  \]
  We may assume that $(a_1,b_1),\dots,(a_q,b_q) \neq (0,1)$ and $(a_{q+1},b_{q+1}),\dots,(a_r,b_r) = (0,1)$.
  Then, for each $(a,b) \in \mathbb{Z}_{\geq 0} \times \mathbb{Z}_{> 0,\mathrm{odd}}$, we have
  \begin{align*}
    0 = \phi(d_{(a,b)*\mathbf{a}}) &= \phi(d_{(a,b)}) \phi(d_\mathbf{a}) - \sum_{l=1}^r \phi(d_{((a_1,b_1),\dots,(a_l+a,b_l+b),\dots,(a_r,b_r))})\\
    &= -(r-q)\phi(d_{((a_1,b_1),\dots,(a_q,b_q),(a,b+1),(0,1),\dots,(0,1))}).
  \end{align*}
  This implies that $\phi(d_\mathbf{a}) = 0$ for all $\mathbf{a} = ((a_1,b_1),\dots,(a_r,b_r))$ with
  \[
    \sharp\{i \mid \text{$b_i$ is even }\} \leq s+1 \text{ and } \sharp\{ i \mid (a_i,b_i) = (0,1) \} = r-q-1.
  \]
  By above, our claim follows by induction.

  For each $r \in \mathbb{Z}_{\geq 0}$ and $\mathbf{a} = ((a_1,b_1),\dots,(a_r,b_r)) \in (A \setminus \{(0,0)\})^r$, define $c_\mathbf{a}$ by
  \[
    c_\mathbf{a} := \phi(d_{((a_1,a_1+b_1),\dots,(a_r,a_r+b_r))}).
  \]
  By the definition of $d_{\mathbf{a}}$'s, equation \eqref{eq: phi(dbfa) = 0 AI2}, and Theorem \ref{thm: sufficient condition for evaluation2}, there exist $\delta_1,\dots,\delta_{r_1},\gamma_1,\dots,\gamma_{r_1} \in \mathbb{C}$ such that
  \[
    c_\mathbf{a} = m_\mathbf{a}((\delta_1,\gamma_1),\dots,(\delta_{r_1},\gamma_{r_1})) \ \text{ for all } \mathbf{a} \in (A \setminus \{(0,0)\})^*.
  \]
  As in the proof of Theorem \ref{thm: classification AI1}, we obtain
  \[
    \delta_i^2 = (\gamma_i^2+4)\gamma_i^2 \ \text{ for all } i = 1,\dots,r_1.
  \]

  For each $a \geq 1$, we have
  \[
    \phi(w_{+,0,2a}) = \phi(d_{(0,2a)}) = c_{(0,2a)} = \sum_{i=1}^r \gamma_i^{2a}.
  \]
  On the other hand, equation \eqref{eq: phi(w+a0) from AI1} implies that
  \[
    \phi(w_{+,0,2a}) = \sum_{i=1}^{r_1} ((\beta''_i)^2-4)^a \ \text{ for all } a \geq 1.
  \]
  By reordering the tuple $((\delta_1,\gamma_1),\dots,(\delta_{r_1},\gamma_{r_1}))$, we may assume that
  \[
    \gamma_i^2 = (\beta''_i)^2-4 \neq 0 \ \text{ for all } i = 1,\dots,r_1.
  \]

  Set $\beta_i := \delta_i \gamma_i^{-1}$.
  Then, we obtain
  \begin{align}\label{eq: beta2 = gamma2+4 neq 4}
    \beta_i^2 = \gamma_i^2+4 = (\beta''_i)^2 \neq 4.
  \end{align}

  As in the proof of Theorem \ref{thm: classification AI1}, for each $a \in \mathbb{Z}_{\geq 0}$, we have
  \begin{align*}
    \phi(w_{+,2a,0}) &= \sum_{i=1}^{r_1} \beta_i^{2a} + 4^a(\phi(w_{+,0,0})-r_1) \\
    &= \sum_{i=1}^{r_1} \beta_i^{2a} + 4^a(\nu_1+\nu_{-1}) \\
    \phi(w_{+,2a+1,0}) &= \sum_{i=1}^{r_1} \beta_i^{2a+1} + 4^a(\phi(w_{+,1,0})-\sum_{i=1}^{r_1} \beta_i).
  \end{align*}
  On the other hand, equation \eqref{eq: phi(w+a0) from AI1} implies that
  \begin{align*}
    \phi(w_{+,2a+1,0}) = 2 \cdot 4^a(\nu_1 - \nu_{-1}) + \sum_{i=1}^{r_1} (\beta''_i)^{2a+1} = 2 \cdot 4^a(\nu_1 - \nu_{-1}) + \sum_{i=1}^{r_1} \beta''_i \beta_i^{2a}.
  \end{align*}
  Comparing this and the previous identity, we obtain $\sum_{i=1}^{r_1} (\beta_i-\beta''_i) = 0$.
  Therefore,
  \[
    \phi(w_{+,2a+1,0}) = \sum_{i=1}^{r_1} \beta_i^{2a+1} + 2 \cdot 4^a(\nu_1-\nu_{-1}).
  \]
  
  Similarly, we have
  \begin{align*}
    &\phi(w_{-,2a,1}) = -\sum_{i=1}^{r_1} \beta_i^{2a}\gamma_i,\\
    &\phi(w_{-,2a+1,1}) = -\sum_{i=1}^{r_1} \beta_i^{2a+1}\gamma_i,
  \end{align*}
  for all $a \in \mathbb{Z}_{\geq 0}$.
  Summarizing, we have obtained
  \begin{align*}
    &\phi(w_{+,a,0}) = \nu_1 \cdot 2^a + \nu_{-1} \cdot (-2)^a + \sum_{i=1}^{r_1} \beta_i^a,\\
    &\phi(w_{-,a,1}) = -\sum_{i=1}^{r_1} \beta_i^a \gamma_i,
  \end{align*}
  for all $a \in \mathbb{Z}_{\geq 0}$.
  For each $i = 1,\dots,r_1$, there exists $\alpha_i \in \mathbb{C}^\times$ such that
  \[
    \beta_i = \alpha_i + \alpha_i^{-1} \text{ and } \gamma_i = -(\alpha_i-\alpha_i^{-1}).
  \]
  The condition \eqref{eq: beta2 = gamma2+4 neq 4} implies that $\alpha_i \neq \pm 1$.
  Thus, we complete the proof.
\end{proof}

\subsection{General case}
In this subsection, assume that the Dynkin diagram $I$ is irreducible.
When $\theta = \mathrm{id}$, let $\tilde{I}$ denote the Dynkin diagram of untwisted affine type corresponding to $I$, and $\tilde{S} := \emptyset$.
When $\theta \neq \mathrm{id}$, let $(\tilde{I},\tilde{S})$ denote the diagram in Figures \ref{fig 1}--\ref{fig 3} corresponding to $(I,\mu,S)$.
Let $(I,\mu,S)$ denote the corresponding triple.
Recall from Subsection \ref{subsect: involution} the elements $x_j,y_j,w_j \in \mathfrak{g}$.
For each $j \in \tilde{I}$ and $a \in \mathbb{Z}_{\geq 0}$, set
\[
  w_{j,a} := t_+^a \otimes w_j.
\]
Also, for each $j \in \tilde{I}$ such that $\mu(j) \neq j$ and $a \in \mathbb{Z}_{\geq 0}$, set
\[
  w_{j,-} := h_j - h_{\mu(j)}, \quad w_{j,-,a} := t_+^a t_-^a \otimes w_{j,-}.
\]
Then, $\mathcal{L}^{\mathrm{tw},0}$ has a basis
\[
  \{ w_{j,a} \mid j \in \tilde{I} \setminus \{ 0 \}, \ a \geq 0 \} \sqcup \{ w_{j,-,a} \mid j \in \tilde{I} \setminus \{ 0 \} \text{ such that } \mu(j) \neq j, \ a \geq 0 \}.
\]

From Figures \ref{fig 1}--\ref{fig 3}, we see that the center $\mathfrak{z}(\mathfrak{k})$ of $\mathfrak{k}$ is nontrivial if and only if $S = \{ 0,k \}$ for some $k \neq 0$.
In this case, $\mathfrak{z}(\mathfrak{k})$ is spanned by the fundamental coweight $\varpi_k^\vee$.

\begin{thm}\label{thm: classification general}
  For each $\phi \in (\mathcal{L}^{\mathrm{tw},0})^*$, the following are equivalent$:$
  \begin{enumerate}
    \item $\dim V(\phi) < \infty$.
    \item For each $j \in \tilde{I} \setminus \{ 0 \}$, the following hold:
    \begin{enumerate}
      \item if $a_{j,\mu(j)} = 0$, then there exist $n_j \in \mathbb{Z}_{\geq 0}$ and $\alpha_{j,1},\dots,\alpha_{j,n_j} \in \mathbb{C}^\times$ such that
      \begin{align*}
        &\phi(w_{j,a}) = \sum_{i=1}^{n_j} (\alpha_{j,i}+\alpha_{j,i}^{-1})^a, \\
        &\phi(w_{j,-,a}) = \sum_{i=1}^{n_j} (\alpha_{j,i}+\alpha_{j,i}^{-1})^a(\alpha_{j,i}-\alpha_{j,i}^{-1})
      \end{align*}
      for all $a \in \mathbb{Z}_{\geq 0}$,
      \item if $\mu(j) = j \notin S$, then there exist $n_j \in \mathbb{Z}_{\geq 0}$, $\alpha_{j,1},\dots,\alpha_{j,n_j} \in \mathbb{C}^\times$ such that
      \[
        \phi(w_{j,a}) = \sum_{i=1}^{n_j} (\alpha_{j,i} + \alpha_{j,i}^{-1})^a \ \text{ for all } a \in \mathbb{Z}_{\geq 0},
      \]
      \item if $\mu(j) = j \in S$, then there exist $\nu_{j,\pm 1} \in \mathbb{C}$, $n_j \in \mathbb{Z}_{\geq 0}$, $\alpha_{j,1},\dots,\alpha_{j,n_j} \in \mathbb{C}^\times \setminus \{ \pm 1 \}$ such that
      \[
        \phi(w_{j,a}) = \nu_{j,1} \cdot 2^a + \nu_{j,-1} \cdot (-2)^a + \sum_{i=1}^{n_j} (\alpha_{j,i} + \alpha_{j,i}^{-1})^a \ \text{ for all } a \in \mathbb{Z}_{\geq 0},
      \]
      and
      \[
        a_j^\vee \nu_{j,\pm 1} \in \mathbb{Z} \  \text{ if } 0 \notin \tilde{S}
      \]
      \item if $a_{j,\mu(j)} = -1$, then there exist $\nu_{j,\pm 1} \in \frac{1}{2}\mathbb{Z}_{\geq 0}$, $n_j \in \mathbb{Z}_{\geq 0}$, $\alpha_{j,1},\dots,\alpha_{j,n_j} \in \mathbb{C}^\times \setminus \{ \pm 1 \}$ such that
      \begin{align*}
        &\phi(w_{j,a}) = 2(\nu_{j,1} \cdot 2^a + \nu_{j,-1} \cdot (-2)^a + \sum_{i=1}^{n_j} (\alpha_{j,i} + \alpha_{j,i}^{-1})^a), \\
        &\phi(w_{j,-,a}) = 2\sum_{i=1}^{n_j} (\alpha_{j,i} + \alpha_{j,i}^{-1})^a(\alpha_{j,i} - \alpha_{j,i}^{-1})
      \end{align*}
      for all $a \in \mathbb{Z}_{\geq 0}$.
    \end{enumerate}
  \end{enumerate}
\end{thm}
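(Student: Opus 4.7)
The strategy is to reduce Theorem \ref{thm: classification general} to the four minimal classifications (Theorems \ref{thm: classification DeltaA1}, \ref{thm: classification A1}, \ref{thm: classification AI1}, \ref{thm: classification AI2}) by a vertex-by-vertex restriction. For each $j \in \tilde{I} \setminus \{0\}$, the elements $x_j, y_j, w_j$ (together with $x_{\mu(j)}, y_{\mu(j)}, w_{\mu(j)}$ when $\mu(j) \neq j$), tensored with the appropriate $\theta$-invariant Laurent polynomials in $t$, generate a Lie subalgebra $\mathcal{L}^{\mathrm{tw}}_j \subseteq \mathcal{L}^{\mathrm{tw}}$. A direct inspection of Proposition \ref{prop: classification involution} shows that $\mathcal{L}^{\mathrm{tw}}_j$ is isomorphic to the loop algebra of $\mathfrak{sl}_2$ (type $\Delta A_1$) in case (a), the current algebra of $\mathfrak{sl}_2$ (type $A_1$) in case (b), the Onsager algebra (type $A\mathrm{I}_1$) in case (c), or the generalized Onsager algebra of $\mathfrak{sl}_3$ (type $A\mathrm{I}_2$) in case (d). The triangular decomposition \eqref{eq: tri decomp clLtw} restricts compatibly to each $\mathcal{L}^{\mathrm{tw}}_j$, so highest weight vectors for $\mathcal{L}^{\mathrm{tw}}$ are highest weight vectors for each $\mathcal{L}^{\mathrm{tw}}_j$.

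For $(1) \Rightarrow (2)$, assume $\dim V(\phi) < \infty$. For each $j$, the cyclic submodule $U(\mathcal{L}^{\mathrm{tw}}_j) v_\phi \subseteq V(\phi)$ is a finite-dimensional highest weight $\mathcal{L}^{\mathrm{tw}}_j$-module of highest weight $\phi|_{\mathcal{L}^{\mathrm{tw},0}_j}$; the minimal classification theorem matching the case of $j$ produces the formulas for $\phi(w_{j,a})$ (and $\phi(w_{j,-,a})$ when applicable) in (2) and the data $(n_j, \alpha_{j,i}, \nu_{j,\pm 1})$. The extra integrality clause of case (c) under $0 \notin \tilde{S}$ is obtained by also restricting to $\mathcal{L}^{\mathrm{tw}}_0$: since $s_0 = 0$ and $\mu(0) = 0$, vertex $0$ falls under case (b), so Theorem \ref{thm: classification A1} yields $\phi(w_{0,a}) = \sum_k (\gamma_k + \gamma_k^{-1})^a$ for some $\gamma_k \in \mathbb{C}^\times$. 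Combining this with the identity $w_0 = -\sum_{j=1}^l a_j^\vee w_j$ from \eqref{eq: w0} and matching the coefficients of $(\pm 2)^a$ on both sides via the uniqueness statement in Proposition \ref{prop: coincidence of p'}, together with the integrality imposed by the $\mathfrak{sl}_2$-theory for the affine triple $(e_0, f_0, h_0) \subseteq \mathfrak{k}$, forces $a_j^\vee \nu_{j,\pm 1}$ to be integral.

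For $(2) \Rightarrow (1)$, we assemble the per-vertex data of (2) into a single tensor product of evaluation modules, in the spirit of Subsection \ref{subsect: tla2 as ema}. The $\nu_{j,\pm 1}$ (with the integrality of (c) and the $\tfrac{1}{2}\mathbb{Z}_{\geq 0}$-restriction of (d)) fit together into $\nu_{\pm 1} \in P_\mathfrak{k}^+$, as one checks against the explicit description of $\mathfrak{k} = \mathfrak{k}' \oplus \mathfrak{z}(\mathfrak{k})$ provided by Proposition \ref{prop: classification involution} \eqref{item: classification involution frk} and Figures \ref{fig 1}--\ref{fig 3}. For each generic point $\alpha \in \mathbb{C}^\times \setminus \{\pm 1\}$, the multiplicity of the $\Gamma$-orbit $\{\alpha, \alpha^{-1}\}$ inside $(\alpha_{j,i})_i$ as $j$ varies determines a weight $\lambda \in P^+$, the key point being that in every case the multiplicities are non-negative integers. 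The resulting tensor product
\[
V := V_\mathfrak{k}(\nu_1)_1 \otimes V_\mathfrak{k}(\nu_{-1})_{-1} \otimes V(\lambda_1)_{\alpha_1} \otimes \cdots \otimes V(\lambda_n)_{\alpha_n}
\]
is finite-dimensional, and the submodule generated by the tensor product of highest weight vectors is a highest weight $\mathcal{L}^{\mathrm{tw}}$-module of highest weight $\phi$, so $V(\phi)$ is its simple quotient and hence finite-dimensional.

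The main technical obstacle is the consolidation step shared by both directions: verifying that the per-vertex data returned by the four minimal classifications can be patched into, or equivalently extracted from, a globally-consistent tuple $(\nu_{\pm 1}, \boldsymbol\lambda, \boldsymbol\alpha)$. This requires carefully aligning the $\Gamma$-orbit decompositions of $a \mapsto \phi(w_{j,a})$ across different $j$, using Proposition \ref{prop: coincidence of p'} to guarantee uniqueness, and a case-by-case check that the resulting multiplicities at each orbit assemble into a dominant integral weight, which is exactly what the normalization factors in (d) and the integrality clause in (c) enforce. In particular, the condition $a_j^\vee \nu_{j,\pm 1} \in \mathbb{Z}$ is precisely the requirement that $\nu_{\pm 1}$ pair integrally with the Chevalley generator $w_0$ of $\mathfrak{k}'$, which is why it appears only when $0 \notin \tilde{S}$.
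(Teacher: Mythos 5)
Your direction $(1)\Rightarrow(2)$ follows the paper: restrict to the rank-one subalgebra attached to each vertex, quote Theorems \ref{thm: classification DeltaA1}--\ref{thm: classification AI2}, and obtain the integrality clause in (c) by applying Theorem \ref{thm: classification A1} to the vertex-$0$ triple and comparing with $w_0=-\sum_j a_j^\vee w_j$ via Proposition \ref{prop: coincidence of p'} (minor quibble: the highest weight there is $\phi(-w_{0,a})$, not $\phi(w_{0,a})$, since $y_0=f_0$ is the positive-root member of the triple; this does not affect the integrality conclusion).

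The gap is in $(2)\Rightarrow(1)$, at exactly the consolidation step you flag as the "main technical obstacle": the claim that the $\nu_{j,\pm1}$ "fit together into $\nu_{\pm1}\in P_\mathfrak{k}^+$" is false when $0\notin\tilde S$, i.e.\ $\tilde S=\{j\}$ and $\mathfrak{k}$ is semisimple with simple system $\tilde I\setminus\{j\}$. In that case $w_0$ \emph{is} a simple coroot of $\mathfrak{k}$, and the weight your construction needs at the fixed point $\pm1$ satisfies $\langle w_0,\nu_{\pm1}\rangle=-a_j^\vee\nu_{j,\pm1}-\sum_{k\neq 0,j}a_k^\vee\,\sharp\{i\mid\alpha_{k,i}=\pm1\}$, which condition (2) only forces to be an \emph{integer}, not a non-negative one. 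Your closing sentence ("pair integrally with the Chevalley generator $w_0$") makes clear you are imposing integrality where dominance is required. Concretely, take $\tilde S=\{j\}$, $\nu_{j,1}=1$ and all other data zero: this $\phi$ is realized by the $\mathfrak{g}$-module $V(\varpi_j)$ evaluated at $t=1$, hence $V(\phi)$ is finite-dimensional, but there is no $\nu_1\in P_\mathfrak{k}^+$ with $\langle w_j,\nu_1\rangle=1$ and $\langle w_k,\nu_1\rangle=0$ for $k\in I\setminus\{j\}$, since then $\langle w_0,\nu_1\rangle=-a_j^\vee<0$; so your tensor product $V_\mathfrak{k}(\nu_1)_1\otimes V_\mathfrak{k}(\nu_{-1})_{-1}\otimes\bigotimes_i V(\lambda_i)_{\alpha_i}$ with generic $\alpha_i$ cannot produce it. The paper avoids this by allowing $\mathfrak{g}$-evaluation modules at the fixed points themselves: it writes $\nu_{j,\pm1}=m_{\pm1}-m'_{\pm1}/a_j^\vee$ with $m_{\pm1},m'_{\pm1}\in\mathbb{Z}_{\geq0}$ (possible precisely because $a_j^\vee\nu_{j,\pm1}\in\mathbb{Z}$) and tensors $m_{\pm1}$ copies of $V(\varpi_j)_{\pm1}$ with $m'_{\pm1}$ copies of $V_\mathfrak{k}(\varpi_0)_{\pm1}$, using $\langle w_j,\varpi_0\rangle=-1/a_j^\vee$; the analogous adjustments handle the $A_{2l}^{(2)}$ case via $V_\mathfrak{k}(\varpi_l)_{\pm1}^{\otimes 2\nu_{l,\pm1}}$. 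Your argument is fine in the remaining cases ($0\in\tilde S$, where the discrepancy at $\pm1$ is absorbed by the central character, and case (d)), but as stated it does not prove $(2)\Rightarrow(1)$ in the semisimple-$\mathfrak{k}$ situation.
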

\begin{proof}
  First, assume the condition (1).
  Then, Theorems \ref{thm: classification DeltaA1}, \ref{thm: classification A1}, \ref{thm: classification AI1}, and \ref{thm: classification AI2}, imply (2) except the condition on $a_j^\vee \nu_{j,\pm 1}$ in the case (c) (see also Remark \ref{rem: reform DeltaA1}).
  Hence, assume that there exists $j \in \{ 1,\dots,l \}$ such that $\mu(j) = j \in S$ and $0 \notin \tilde{S}$.
  From Figures \ref{fig 1}--\ref{fig 3}, we see that $\tilde{S} = \{j\}$ and $\tilde{I}$ is not of type $A_{2l}^{(2)}$.
  By equation \eqref{eq: sl2 triple 0}, we see that $(y_0,x_0,-w_0)$ forms an $\mathfrak{sl}_2$-triple.
  Also, by the definition of $y_0$, we have $y_0 v_\phi = 0$.
  Hence, by Theorem \ref{thm: classification A1}, there exist $n_0 \in \mathbb{Z}_{\geq 0}$ and $\alpha_{0,1},\dots,\alpha_{0,n_0} \in \mathbb{C}^\times$ such that
  \[
    \phi(-w_{0,a}) = \sum_{i=1}^{n_0} (\alpha_{0,i} + \alpha_{0,i}^{-1})^a \ \text{ for all } a \in \mathbb{Z}_{\geq 0}.
  \]
  This, together with equation \eqref{eq: w0}, implies that
  \[
    \sum_{i=1}^{n_0} (\alpha_{0,i}+\alpha_{0,i}^{-1})^a = a_j^\vee\nu_{j,1} \cdot 2^a + a_j^\vee\nu_{j,-1} \cdot (-2)^a + \sum_{k \in \tilde{I} \setminus \{0\}} a_k^\vee\sum_{i=1}^{n_k} (\alpha_{k,i}+\alpha_{k,i}^{-1})^a \ \text{ for all } a \in \mathbb{Z}_{\geq 0}.
  \]
  By Proposition \ref{prop: coincidence of p'}, we obtain
  \[
    \sharp\{ i \mid \alpha_{0,i} = \pm 1 \} = a_j^\vee \nu_{j,\pm 1} + \sum_{k \neq 0,j} a_k^\vee \cdot \sharp\{ i \mid \alpha_{k,i} = \pm 1 \}.
  \]
  This implies that $a_j^\vee \nu_{j,\pm 1} \in \mathbb{Z}$, as desired.

  Next, assume the condition (2).
  We only need to construct a finite-dimensional module which contains a highest weight module of highest weight $\phi$.
  For each $i \in I$, let $\varpi_i \in P^+$ denote the corresponding fundamental weight.
  Set
  \[
    V := \bigotimes_{j = 1}^l \bigotimes_{i=1}^{n_j} V(\varpi_j)_{\alpha_{j,i}}, \quad v := \bigotimes_j \bigotimes_i v_{\varpi_j} \in V.
  \]
  If there exist no $j \in \{ 1,\dots, l \}$ such that either $\mu(j) = j \in S$ or $a_{j,\mu(j)} = -1$, then we see that the submodule of $V$ generated by $v$ is a highest weight module of highest weight $\phi$.

  Suppose that there exists $j \in \{ 1,\dots, l \}$ such that $\mu(j) = j \in S$.
  If $|\tilde{S}| = 2$, then we have $\tilde{S} = \{ 0,j \}$ and the center of $\mathfrak{k}$ is spanned by $z := \varpi_j^\vee$.
  Hence, each $\nu \in \mathbb{C}$ can be regarded as an element of $P_\mathfrak{k}^+$ by
  \[
    \langle h_k, \nu \rangle = 0 \ \text{ for all } k \in \tilde{I} \setminus \tilde{S} \text{ and } \langle z, \nu \rangle = \nu.
  \]
  Then, considering
  \[
    V_\mathfrak{k}(\nu_{k,1})_1 \otimes V_{\mathfrak{k}}(\nu_{k,-1}) \otimes V
  \]
  instead of $V$, we see that the condition (1) follows.

  Next, assume that $|\tilde{S}| = 1$.
  In this case, $\mathfrak{k}$ is a semisimple Lie algebra whose Dynkin diagram is $\tilde{I} \setminus \{ j \}$.
  In particular, we can consider the fundamental weight $\varpi_0 \in P_\mathfrak{k}^+$ corresponding to $0$.
  Then, we have
  \[
    \langle w_j, \varpi_0 \rangle = \langle -\frac{1}{a_j^\vee}(w_0 - \sum_{k \neq 0,j} a_k^\vee w_k), \varpi_0 \rangle = -\frac{1}{a_j^\vee}.
  \]
  Since $a_j^\vee \nu_{j,\pm 1} \in \mathbb{Z}$, we can take $m_{\pm 1}, m'_{\pm 1} \in \mathbb{Z}_{\geq 0}$ such that
  \[
    m_{\pm 1} - \frac{m'_{\pm 1}}{a_j^\vee} = \nu_{j,\pm 1}.
  \]
  Hence, we only need to replace $V$ by
  \[
    V(\varpi_j)_1^{\otimes m_1} \otimes V_\mathfrak{k}(\varpi_0)_1^{\otimes m'_1} \otimes V(\varpi_j)_{-1}^{\otimes m_{-1}} \otimes V_\mathfrak{k}(\varpi_0)_{-1}^{\otimes m'_{-1}} \otimes V.
  \]

  Finally, suppose that there exist $j \in \{ 1,\dots,l \}$ such that $a_{j,\mu(j)} = -1$.
  In this case, $\tilde{I}$ is of type $A_{2l}^{(2)}$, $j = l$, and $w_l = 2(h_l + h_{l+1})$.
  Therefore, we need to consider
  \[
    V_\mathfrak{k}(\varpi_l)_1^{2\nu_{l,1}} \otimes V_\mathfrak{k}(\varpi_l)_{-1}^{2\nu_{l,-1}} \otimes V
  \]
  instead of $V$.
  Thus, we complete the proof.
\end{proof}

\input{table.tex}

% \newpage

\end{document}